\documentclass[11pt,a4paper]{article}

\usepackage[T1]{fontenc}
\usepackage[utf8]{inputenc}
\usepackage{lmodern}
\usepackage{amsmath,amssymb,amsthm,mathtools}
\usepackage{geometry}
\usepackage{microtype}
\usepackage{enumitem}
\usepackage{hyperref}
\usepackage{mathrsfs}
\usepackage{bm}
\hypersetup{colorlinks=true,linkcolor=blue,citecolor=blue,urlcolor=blue}

\newtheorem{theorem}{Theorem}[section]
\newtheorem{lemma}[theorem]{Lemma}
\newtheorem{proposition}[theorem]{Proposition}
\newtheorem{corollary}[theorem]{Corollary}
\newtheorem{remark}[theorem]{Remark}
\newtheorem{definition}[theorem]{Definition}
\newtheorem{assumption}[theorem]{Assumption}

\newcommand{\R}{\mathbb R}
\newcommand{\M}{\mathcal M}

\newcommand{\diam}{\operatorname{diam}}

\newcommand{\eps}{\varepsilon}

\newcommand{\address}[1]{\def\theaddress{#1}}
\newcommand{\email}[1]{\def\theemail{#1}}
\newcommand{\printauthorinfo}{%
  \begin{center}
  \small
  \theaddress\\
  \href{mailto:\theemail}{\texttt{\theemail}}
  \end{center}
}

\title{Cohomological Reduction for Fiber-Contracting Extensions:\\
From Subcohomology to Thermodynamic Formalism}
\author{G. Ferreira}
\address{G. Ferreira, Departamento de Matem\'atica, Universidade Federal do Maranh\~ao, Av. dos Portugueses 1966, Vila Bacanga, 65065--545 S\~ao Lu\'is, MA, Brazil}
\email{giovane.ferreira@ufma.br}
\date{}

\begin{document}
\maketitle
\printauthorinfo

\begin{center}
\small
\textbf{2020 Mathematics Subject Classification.}
37D35, 37A60, 37D20, 37C30.

\textbf{Keywords.}
cohomological equation; subcohomology; Liv\v{s}ic theorem;
fiber contraction; ergodic optimization; thermodynamic formalism;
weak Gibbs measures; sequential Gibbs measures.
\end{center}

\begin{abstract}
We develop a reduction principle for cohomological, variational, and
thermodynamic questions in fiber-contracting extensions of local
homeomorphisms. Under uniform contraction in the fibers and the existence
of a continuous global section, every H\"older potential admits an explicit
cohomological reduction
\[
\varphi=\psi\circ\pi+u-u\circ F,
\]
where the reduced potential $\psi$ depends only on the quotient. The
construction is quantitative and separates the hypotheses required for
continuity, H\"older regularity, subcohomology, Liv\v{s}ic theory, weak
Gibbs transfer, and sequential Gibbs transfer.

The main contribution developed in this paper is a transfer framework: the quotient theory can be
lifted to the attractor while preserving invariant averages, minimizing
measures, metric entropy, pressure, and equilibrium states. In particular,
the invariant-measure spaces are in bijection and the contracting fibers
have zero relative topological entropy. We also establish weak Gibbs
transfer under a precise subexponential comparison of Bowen-ball masses and
give concrete sufficient criteria. We also prove blockwise Bowen sequential weak Gibbs transfer
along point-dependent Gibbs times; under uniformly bounded sequential mass
distortion, the strong Bowen sequential Gibbs property is preserved along the same prescribed admissible Gibbs-time sequence.

The paper emphasizes explicit applications. We work out an affine
contracting skew-product
\[
F(x,y)=(f(x),ay+\rho(x)),\qquad |a|<1,
\]
for which fiber-dependent potentials of the form
$\varphi(x,y)=g(x)+by$ are reduced explicitly to
\[
\psi(x)=g(x)+\frac{b}{1-a}\rho(x).
\]
This example shows concretely how a potential that genuinely depends on the
stable coordinate can be replaced by a quotient potential and how
equilibrium states and ergodic optimization are consequently reduced to the
base dynamics. A second symbolic skew-product example verifies the weak
Gibbs correspondence on an invariant graph. These examples demonstrate
that the framework is not merely formal and clarify precisely where the
additional geometric hypotheses enter.
\end{abstract}

\section{Introduction}

Cohomological equations and subcohomological inequalities are among the
basic tools for identifying the dynamically relevant part of an observable.
For a dynamical system $T:X\to X$, the equation
\[
\phi=u-u\circ T
\]
implies that Birkhoff sums telescope and that the integral of $\phi$ against
every invariant probability measure vanishes. Liv\v{s}ic theory identifies
periodic orbit obstructions as the decisive obstruction in a broad
hyperbolic setting, while subactions and calibrated subactions turn
variational inequalities into pointwise inequalities
\cite{Livsic,deLaLlaveMarcoMoriyon,BouschJenkinson,MbarkiSantana}.

The purpose of this paper is to establish a systematic reduction mechanism
for these questions in a class of fiber-contracting extensions. Let
\[
F:N\to N,\qquad f:M\to M
\]
be continuous maps and let $\pi:N\to M$ be a continuous surjection satisfying
\[
\pi\circ F=f\circ\pi.
\]
For $x\in M$, write $N_x=\pi^{-1}(x)$ and assume that points in the same
fiber converge exponentially under $F$. We also assume that $\pi$ admits a
continuous global section. The latter is a genuine topological hypothesis;
it is not automatic for an arbitrary factor map and will be verified
explicitly in the principal examples below.

The central theorem says that the contracting direction is
\emph{cohomologically removable}. For every H\"older potential $\varphi$,
the stable series
\[
u(\hat x)=\sum_{j=0}^{\infty}
\left[\varphi(F^j\hat x)-\varphi(F^jr(\hat x))\right]
\]
converges uniformly and gives
\[
\boxed{\varphi=\psi\circ\pi+u-u\circ F.}
\]
Thus the potential may be replaced, within its cohomology class, by one
that is constant on the fibers. This mechanism is closely related to the
Sinai cohomology lemma in hyperbolic symbolic models
\cite{Sinai}; the point here is not to claim the stable reduction itself as
a new version of Sinai's lemma, but to formulate it for abstract
fiber-contracting extensions and to derive a unified set of consequences.

The main structural message is therefore
\[
\boxed{
\begin{array}{c}
\text{fiber contraction}\\
\Downarrow\\
\text{cohomological reduction}\\
\Downarrow\\
\text{transfer of quotient theory to the attractor}
\end{array}}
\]
Once the reduction is available, the invariant-measure problem is completely
encoded by the quotient: projection gives a bijection between invariant
probability measures, invariant averages agree, and minimizing measures are
carried bijectively to minimizing measures. Consequently, ergodic
optimization for a fiber-dependent potential is reduced to an optimization
problem for the quotient potential.

The same reduction has thermodynamic consequences. Uniform contraction
implies zero fiberwise relative topological entropy. We use the
Ledrappier--Walters relative variational principle in the form
\[
\sup_{\pi_*\widetilde\nu=\mu} h_{\widetilde\nu}(F)
=
h_\mu(f)+\int_M h_{\mathrm{top}}(F,N_x)\,d\mu(x).
\]
Since metric entropy is monotone under factors, every invariant lift also
satisfies $h_\mu(f)\le h_{\widetilde\mu}(F)$. Once the fiber entropies are
shown to vanish, the supremum above equals $h_\mu(f)$; hence every invariant
lift has exactly the same metric entropy as its projection. Combining this equality with the
cohomological identity gives
\[
P_F(\varphi)=P_f(\psi).
\]
Equilibrium states therefore correspond exactly under the invariant-measure
bijection. The fibers contribute no additional exponential orbit complexity,
even though the original potential may depend strongly on the fiber
coordinate. We stress that this conclusion uses the relative variational
principle in addition to the reduction theorem; it is not a formal
consequence of cohomology alone.

A second part of the paper concerns subcohomology and Liv\v{s}ic theory.
The critical case $m(\varphi,F)=0$ is transferred from a corresponding
subcohomological theorem on the quotient, whereas the positive-margin case
$m(\varphi,F)>0$ admits a direct Ma\~n\'e-type construction. Periodic points
of the quotient lift uniquely to periodic points of the extension by the
contraction of the return map on each periodic fiber. Hence quotient
Liv\v{s}ic theorems lift without introducing a new periodic obstruction.
For zooming quotients this yields, in particular, a direct route from the
subcohomology and Liv\v{s}ic theory of Mbarki--Santana to the fibered
attractor \cite{MbarkiSantana}.

Weak Gibbs transfer requires more than zero relative entropy. We therefore
isolate the precise additional condition: for the particular corresponding
invariant measures under consideration, Bowen-ball masses must be comparable
up to a subexponential factor, possibly after changing the Bowen radius.
This condition is not implied by fiber contraction alone: contraction controls
relative orbit separation inside a single fiber, whereas a Bowen ball also
contains information about how mass is distributed across nearby fibers.
Accordingly, the weak Gibbs results are stated as conditional transfer
theorems, not as automatic consequences of the reduction. We prove forward
and backward transfer theorems and give concrete sufficient criteria in
invariant-graph, contracting skew-product, and local-product settings.

The concrete examples are a central part of the paper. First, we analyze
the affine skew-product
\[
F(x,y)=\bigl(f(x),ay+\rho(x)\bigr),\qquad |a|<1,
\]
on a compact cylinder. For the genuinely fiber-dependent potential
\[
\varphi(x,y)=g(x)+by
\]
the transfer function and reduced potential can be computed explicitly:
\[
u(x,y)=\frac{b}{1-a}y,
\qquad
\psi(x)=g(x)+\frac{b}{1-a}\rho(x).
\]
Thus the stable coordinate does not disappear by assuming the potential is
stable-constant; rather, its contribution is converted into an explicit
quotient term. This example provides a direct test of the main theorem and
makes the pressure, equilibrium-state, and ergodic-optimization reductions
fully concrete.

Second, we consider a two-sided symbolic skew-product with an invariant
graph. In that setting the invariant lift is supported on the graph, so
Bowen-ball comparison has no exponential loss. Consequently any Gibbs
measure for the quotient potential lifts to a weak Gibbs measure for the
original fiber-dependent potential. This verifies the weak Gibbs mechanism
in a genuinely dynamical model rather than leaving it as an abstract
measure-comparison hypothesis.

These examples also clarify the scope of the theory. Smale--Williams
solenoids and partially hyperbolic attractors provide important motivation,
but the global-section assumption is not automatic in those settings.
Accordingly, we distinguish verified examples from conditional classes and
do not claim an application unless all hypotheses are checked.

For clarity, we stress the precise novelty claim. The stable cohomological
reduction is a Sinai-type mechanism and is not claimed as new by itself. The
contribution developed in the present paper is the abstract transfer framework
for fiber-contracting extensions, together with its simultaneous consequences
for invariant optimization, subcohomology, Liv\v{s}ic theory, pressure,
equilibrium states, weak Gibbs measures, and sequential Gibbs transfer, as well
as the explicit zooming-extension application in
Section~\ref{subsec:zooming-application}.

The paper is organized around this division of labor. Sections
\ref{sec:geom}--\ref{sec:reduction} establish the geometric and
cohomological reduction; the subsequent sections transfer subcohomology,
Liv\v{s}ic theory, invariant optimization, and thermodynamic formalism.
Section~\ref{sec:weak-gibbs} isolates the additional quantitative measure
hypothesis, and Section~\ref{sec:sequential-gibbs} treats sequential Gibbs
transfer along point-dependent Gibbs times. The final section contains the verified examples and explains
how the abstract results apply to concrete fibered models.

\section{Geometric Setting}\label{sec:geom}

Let $N$ and $M$ be compact metric spaces. Let $F:N\to N$ be continuous and
$f:M\to M$ be a local homeomorphism. Assume that $\pi:N\to M$ is a
continuous surjection satisfying
\begin{equation}
    \pi\circ F=f\circ\pi.
\end{equation}
Then
\begin{equation}
    F(N_x)\subset N_{f(x)}.
    \label{eq:fiber-map}
\end{equation}

\begin{assumption}[Uniform fiber contraction]
There exists $0<\lambda<1$ such that
\[
d_N(F^n\hat x,F^n\hat y)
\leq\lambda^n d_N(\hat x,\hat y)
\]
for all $n\geq0$ and all $\hat x,\hat y$ in the same fiber.
\end{assumption}

\begin{assumption}[Continuous fiber selector]
\label{ass:selector}
There exists a continuous map
\[
r:N\to N
\]
such that
\begin{equation}
r(\hat x)\in N_{\pi(\hat x)}
\end{equation}
and
\begin{equation}
\pi(\hat x)=\pi(\hat y)
\quad\Longrightarrow\quad
r(\hat x)=r(\hat y).
\end{equation}
\end{assumption}

Thus $r$ is equivalently a continuous section of $\pi$ written on $N$.
Indeed, since $\pi$ is a continuous surjection from the compact space $N$
onto the Hausdorff space $M$, it is a quotient map. Because $r$ is continuous
and constant on the fibers of $\pi$, it factors uniquely through a continuous
map $s:M\to N$ such that
\[
\pi\circ s=\operatorname{id}_M,
\qquad
r=s\circ\pi.
\]
Conversely, every continuous section $s$ defines such a selector by
$r=s\circ\pi$. This makes clear that the selector assumption is strong and is
not automatic for a general factor map. It must be verified in concrete
examples.

For $0<\beta\leq1$, define
\[
[\varphi]_\beta
=
\sup_{\hat x\neq\hat y}
\frac{|\varphi(\hat x)-\varphi(\hat y)|}
{d_N(\hat x,\hat y)^\beta}.
\]

We shall also use invariant measure spaces
\[
\M_F(N)=\{\widetilde\mu:F_*\widetilde\mu=\widetilde\mu\},
\qquad
\M_f(M)=\{\mu:f_*\mu=\mu\}.
\]
For a continuous potential $\varphi$ define
\begin{equation}
m(\varphi,F)
=
\min_{\widetilde\mu\in\M_F(N)}
\int_N\varphi\,d\widetilde\mu,
\label{eq:m-total}
\end{equation}
and similarly
\begin{equation}
m(\psi,f)
=
\min_{\mu\in\M_f(M)}
\int_M\psi\,d\mu.
\label{eq:m-base}
\end{equation}
The invariant-measure sets are nonempty by the Krylov--Bogolyubov theorem,
compact in the weak-$*$ topology, and the integral of a continuous potential
is weak-$*$ continuous. Hence the minima in
\eqref{eq:m-total}--\eqref{eq:m-base} exist.

\section{Main Cohomological Reduction}\label{sec:reduction}

\begin{theorem}[Cohomological reduction]
\label{thm:reduction}
Assume the geometric setting above. Let $\varphi:N\to\R$ be
$\beta$-H\"older. Define
\begin{equation}
 u(\hat x)=\sum_{j=0}^{\infty}
 \bigl[\varphi(F^j\hat x)-\varphi(F^jr(\hat x))\bigr].
 \label{eq:u-series-main}
\end{equation}
Then the series converges absolutely and uniformly and defines a continuous
function $u:N\to\R$. Moreover,
\begin{equation}
 \|u\|_\infty\le
 \frac{[\varphi]_\beta\diam(N)^\beta}{1-\lambda^\beta}.
 \label{eq:u-sup-bound}
\end{equation}
The function
\[
 \bar\varphi:=\varphi-u+u\circ F
\]
is constant on each fiber. Consequently there exists a unique continuous
$\psi:M\to\R$ such that
\begin{equation}
 \boxed{\varphi=\psi\circ\pi+u-u\circ F.}
 \label{eq:main-cohomology}
\end{equation}
H\"older regularity is not asserted from fiber contraction alone; a
quantitative sufficient criterion is given in
Proposition~\ref{prop:holder-reduction}.
\end{theorem}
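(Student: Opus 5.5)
The plan is to bound each term of the series \eqref{eq:u-series-main} by a geometric sequence, conclude continuity from uniform convergence, and then check fiber-constancy of $\bar\varphi$ by a telescoping identity.

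\textbf{Convergence and the bound.} Since $r(\hat x)\in N_{\pi(\hat x)}$, the points $\hat x$ and $r(\hat x)$ lie in the same fiber. Uniform fiber contraction gives
\[
d_N(F^j\hat x,F^jr(\hat x))\le\lambda^j d_N(\hat x,r(\hat x))\le\lambda^j\diam(N).
\]
Hence the $j$-th term of the series is bounded by $[\varphi]_\beta\lambda^{j\beta}\diam(N)^\beta$. The Weierstrass M-test then gives absolute and uniform convergence. Summing the geometric series yields \eqref{eq:u-sup-bound}. Each partial sum is continuous, because $F$, $r$ and $\varphi$ are continuous. A uniform limit of continuous functions is continuous, so $u$ is continuous.

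\textbf{Computing $u-u\circ F$.} Since $F$ maps $N_x$ into $N_{f(x)}$ by \eqref{eq:fiber-map}, the point $F r(\hat x)$ lies in the fiber of $F\hat x$. Therefore $r(F\hat x)$ and $F r(\hat x)$ lie in the same fiber, although they need not coincide. I would write $u(F\hat x)$ termwise and subtract term by term, which is legitimate because both series converge absolutely. The $\varphi(F^{j+1}\hat x)$ terms telescope, leaving
\[
u(\hat x)-u(F\hat x)=\varphi(\hat x)-\varphi(r(\hat x))+\sum_{j=0}^{\infty}\bigl[\varphi(F^{j}r(F\hat x))-\varphi(F^{j+1}r(\hat x))\bigr].
\]
The remaining series converges absolutely by the same contraction estimate, applied to the pair $r(F\hat x)$, $F r(\hat x)$ in a common fiber. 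Consequently
\[
\bar\varphi(\hat x)=\varphi(r(\hat x))+\sum_{j\ge0}\bigl[\varphi(F^{j+1}r(\hat x))-\varphi(F^j r(F\hat x))\bigr].
\]
I expect this index shift and regrouping to be the main delicate point. The reindexing must be justified by absolute convergence, and the pairing must be done so that every comparison is between two points of the same fiber.

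\textbf{Fiber constancy and the factorization.} Suppose $\pi(\hat x)=\pi(\hat y)$. Then $r(\hat x)=r(\hat y)$. Also $\pi(F\hat x)=f\pi(\hat x)=\pi(F\hat y)$, so $r(F\hat x)=r(F\hat y)$. The right-hand side of the last display therefore takes the same value at $\hat x$ and $\hat y$, so $\bar\varphi$ is constant on fibers. Moreover $\bar\varphi$ is continuous, since $u$ and $F$ are.

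As noted after Assumption~\ref{ass:selector}, $\pi$ is a quotient map. Hence $\bar\varphi$ factors as $\psi\circ\pi$ for a continuous function $\psi$; concretely one may take $\psi=\bar\varphi\circ s$. The function $\psi$ is unique because $\pi$ is surjective. Rearranging $\bar\varphi=\psi\circ\pi$ gives \eqref{eq:main-cohomology}.
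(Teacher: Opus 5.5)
Your proof is correct, and the convergence part matches the paper's argument. The fiber-constancy step takes a different route.

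The paper argues by comparing two points. For $\pi(\hat x)=\pi(\hat y)$ it uses $r(\hat x)=r(\hat y)$ and $r(F\hat x)=r(F\hat y)$. With these, $u(\hat x)-u(\hat y)$ becomes $\sum_{j\ge0}[\varphi(F^j\hat x)-\varphi(F^j\hat y)]$, and $u(F\hat x)-u(F\hat y)$ becomes the same series started at $j=1$. Subtracting gives $\bar\varphi(\hat x)=\bar\varphi(\hat y)$ without ever computing $\bar\varphi$.

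You instead compute $\bar\varphi$ in closed form. Evaluated along the section, your formula is the explicit Sinai-type expression
\[
\psi(x)=\varphi(s(x))+\sum_{j\ge0}\bigl[\varphi(F^{j+1}s(x))-\varphi(F^{j}s(f(x)))\bigr],
\]
whose terms are dominated by $[\varphi]_\beta\lambda^{j\beta}\diam(N)^\beta$ because $Fs(x)$ and $s(f(x))$ lie in the same fiber $N_{f(x)}$.

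The paper's route is shorter and avoids tracking the mismatched pair $Fr(\hat x)$, $r(F\hat x)$. Yours yields more. It exhibits $\psi$ directly as a uniform limit of continuous functions of $x$, so neither the quotient-map remark nor $\psi=\bar\varphi\circ s$ is needed for continuity. It also gives a concrete formula from which H\"older estimates on $\psi$, of the kind in Proposition~\ref{prop:holder-reduction}, could be read off.

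The step you flagged as delicate is routine. After splitting off the $j=0$ term of $u(\hat x)$, it is termwise subtraction of two convergent series, which needs only convergence, not absolute convergence. The $\varphi(F^{j+1}\hat x)$ terms then cancel rather than telescope.
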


\begin{proof}
Since $\hat x$ and $r(\hat x)$ lie in the same fiber,
\[
d_N(F^j\hat x,F^jr(\hat x))
\le \lambda^j d_N(\hat x,r(\hat x)).
\]
Hence
\begin{align*}
|\varphi(F^j\hat x)-\varphi(F^jr(\hat x))|
&\le [\varphi]_\beta
 d_N(F^j\hat x,F^jr(\hat x))^\beta\\
&\le [\varphi]_\beta\lambda^{\beta j}
 d_N(\hat x,r(\hat x))^\beta\\
&\le [\varphi]_\beta\lambda^{\beta j}\diam(N)^\beta.
\end{align*}
The Weierstrass $M$-test gives absolute uniform convergence and
\eqref{eq:u-sup-bound}; continuity follows because the partial sums are
continuous.

It remains to prove that
$\bar\varphi=\varphi-u+u\circ F$ is constant on fibers. Let
$\pi(\hat x)=\pi(\hat y)$. Since $r(\hat x)=r(\hat y)$, subtraction of
the two absolutely convergent series gives
\begin{align}
u(\hat x)-u(\hat y)
&=\sum_{j=0}^{\infty}
\bigl[\varphi(F^j\hat x)-\varphi(F^j\hat y)\bigr].
\label{eq:u-fiber-difference}
\end{align}
The points $F\hat x$ and $F\hat y$ also lie in the same fiber, and hence
\begin{align}
u(F\hat x)-u(F\hat y)
&=\sum_{j=0}^{\infty}
\bigl[\varphi(F^{j+1}\hat x)-\varphi(F^{j+1}\hat y)\bigr]\\
&=\sum_{j=1}^{\infty}
\bigl[\varphi(F^j\hat x)-\varphi(F^j\hat y)\bigr].
\label{eq:u-fiber-difference-shifted}
\end{align}
Both series are absolutely convergent because
\[
|\varphi(F^j\hat x)-\varphi(F^j\hat y)|
\le [\varphi]_\beta\lambda^{\beta j}
 d_N(\hat x,\hat y)^\beta.
\]
Subtracting \eqref{eq:u-fiber-difference} and
\eqref{eq:u-fiber-difference-shifted} therefore yields
\begin{align*}
\bar\varphi(\hat x)-\bar\varphi(\hat y)
&=\varphi(\hat x)-\varphi(\hat y)
 -\bigl(u(\hat x)-u(\hat y)\bigr)
 +\bigl(u(F\hat x)-u(F\hat y)\bigr)\\
&=0.
\end{align*}
Thus $\bar\varphi$ is constant on every fiber. Define
\[
\psi(x)=\bar\varphi(s(x)).
\]
Then $\psi$ is continuous and $\bar\varphi=\psi\circ\pi$. For the
canonical transfer function $u$ fixed by \eqref{eq:u-series-main}, the quotient
potential $\psi$ is unique by surjectivity of $\pi$.
\end{proof}

\begin{theorem}[Reduction and transfer principle]
\label{thm:master-reduction}
Assume the geometric setting and the hypotheses of Theorem~\ref{thm:reduction}.
For every H\"older potential $\varphi$ there are a canonically defined continuous
transfer function $u$, given by the stable-difference series
\eqref{eq:u-series-main}, and a corresponding continuous quotient potential
$\psi$ such that
\[
\varphi=\psi\circ\pi+u-u\circ F.
\]
Moreover:
\begin{enumerate}[label=\textnormal{(\roman*)}]
\item $\pi_*:\M_F(N)\to\M_f(M)$ is a bijection;
\item corresponding invariant measures have the same integral of
$\varphi$ and $\psi$, hence
\[
m(\varphi,F)=m(\psi,f);
\]
\item the minimizing measures correspond bijectively;
\item the relative topological entropy is zero and corresponding invariant
measures have equal metric entropy;
\item
\[
P_F(\varphi)=P_f(\psi),
\]
and equilibrium states correspond bijectively;
\item whenever a subcohomological or Liv\v{s}ic theorem on the quotient applies to
$\psi$ (with all of its regularity and dynamical hypotheses satisfied), the
corresponding conclusion transfers to $\varphi$;
\item weak Gibbs transfer holds whenever the corresponding invariant
measures satisfy the forward/backward subexponential Bowen-mass comparison
proved in Section~\ref{sec:weak-gibbs};
\item blockwise sequential weak Gibbs transfer holds along prescribed point-dependent Gibbs times under the sequential Bowen-mass comparison of
Section~\ref{sec:sequential-gibbs}; under uniformly bounded comparison the
strong Bowen sequential Gibbs property is preserved along the same prescribed admissible Gibbs-time sequence.
\end{enumerate}
The conclusions in (i)--(v) use only the hypotheses explicitly identified
in the corresponding sections; the additional assumptions in (vi)--(viii)
are not consequences of fiber contraction alone.
\end{theorem}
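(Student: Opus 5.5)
The cohomological identity is Theorem~\ref{thm:reduction}, so what remains is to prove (i)--(v) directly and to treat (vi)--(viii) as pointers to the conditional transfer theorems of the later sections. The order is: measures, then integrals, then entropy, then pressure.

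\textbf{Step (i): $\pi_*$ is a bijection.} I would build an inverse lift. For $\mu\in\M_f(M)$ and continuous $g:N\to\R$, define
\[
L_\mu(g)=\lim_{n\to\infty}\int_M g\bigl(F^n s(f^{-n}\text{-branch})\bigr)\,d\mu .
\]
Since $f$ is only a local homeomorphism, it is cleaner to argue through pushforwards. Fix any lift $\nu$ with $\pi_*\nu=\mu$; one exists by Hahn--Banach/Riesz, or directly as $\nu=s_*\mu$. Then set $\widetilde\mu=\lim_n F^n_*\nu$.

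To see that the limit exists and is independent of the chosen lift, let $\nu,\nu'$ be two lifts of the same invariant $\mu$. Disintegrate both over $\mu$ and couple them fiberwise, obtaining a coupling $\gamma$ supported on $\{(\hat x,\hat y):\pi\hat x=\pi\hat y\}$. By uniform fiber contraction,
\[
\Bigl|\int g\,dF^n_*\nu-\int g\,dF^n_*\nu'\Bigr|\le\int|g(F^n\hat x)-g(F^n\hat y)|\,d\gamma\le\omega_g(\lambda^n\diam N)\to0 .
\]
Applying this with $\nu'=F_*\nu$, which is again a lift of $\mu$ by invariance, shows that $(F^n_*\nu)$ is Cauchy on each continuous $g$. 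Hence the limit $\widetilde\mu$ exists, it is $F$-invariant, and $\pi_*\widetilde\mu=\mu$, which gives surjectivity. For injectivity, if $\widetilde\mu_1,\widetilde\mu_2$ are invariant with the same projection, the same estimate gives $\int g\,d\widetilde\mu_1-\int g\,d\widetilde\mu_2=\lim_n\bigl(\int g\circ F^n\,d\widetilde\mu_1-\int g\circ F^n\,d\widetilde\mu_2\bigr)=0$.

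\textbf{Steps (ii) and (iii): integrals and minimizing measures.} Integrate \eqref{eq:main-cohomology} against invariant $\widetilde\mu$. The coboundary term $u-u\circ F$ integrates to zero because $u$ is continuous and bounded. Hence $\int\varphi\,d\widetilde\mu=\int\psi\,d\pi_*\widetilde\mu$. Combined with the bijection in (i), this gives $m(\varphi,F)=m(\psi,f)$, and minimizers are carried to minimizers in both directions.

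\textbf{Step (iv): entropy.} This is the main obstacle. For $\hat x,\hat y\in N_x$ we have $d(F^k\hat x,F^k\hat y)\le\diam(N)\lambda^k$. So for fixed $\eps$, a single $(n,\eps)$-spanning set of $N_x$ is obtained by taking an $\eps$-net of $N_x$ of cardinality $C(\eps)$, uniform in $x$ by compactness. Indeed, the first $k_0$ iterates are handled by uniform continuity of $F^{k}$ for $k\le k_0$, where $\lambda^{k_0}\diam N<\eps$, and after time $k_0$ everything in the fiber is $\eps$-close. The spanning number is therefore bounded independently of $n$, so $h_{\rm top}(F,N_x)=0$.

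The Ledrappier--Walters formula quoted in the introduction then gives $h_{\widetilde\mu}(F)\le h_\mu(f)$ for every lift. The reverse inequality holds because $f$ is a factor of $F$. The delicate point is ensuring that the uniform cover works for the Bowen metric; I would make the $k\le k_0$ step precise via an $\eps'$-net with $\eps'$ given by uniform continuity of $F,\dots,F^{k_0}$.

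\textbf{Step (v): pressure and equilibrium states.} By the variational principle and (i), (ii), (iv),
\[
P_F(\varphi)=\sup_{\widetilde\mu}\Bigl(h_{\widetilde\mu}(F)+\int\varphi\,d\widetilde\mu\Bigr)=\sup_\mu\Bigl(h_\mu(f)+\int\psi\,d\mu\Bigr)=P_f(\psi).
\]
Since each term in the supremum is preserved under the bijection, maximizers correspond, so equilibrium states correspond bijectively.

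\textbf{Items (vi)--(viii).} These are only asserted under the hypotheses of the later sections. For (vi), I would lift periodic orbits: the return map $F^p:N_x\to N_x$ is a $\lambda^p$-contraction, so it has a unique fixed point. Sums along periodic orbits agree because the coboundary telescopes. The remaining parts of (vi)--(viii) are deferred to Sections~\ref{sec:weak-gibbs} and~\ref{sec:sequential-gibbs}, using the cohomology identity to compare Birkhoff sums up to $2\|u\|_\infty$.
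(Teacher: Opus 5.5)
Your proposal is correct. For (ii)--(viii) it follows the paper, which proves this theorem as a synthesis of its later results. In (iv) you use the same combination of zero fiber entropy, the Ledrappier--Walters principle, and factor monotonicity. In (v) you compare free energies under the bijection, as the paper does. In (vi) you use the same return-map contraction as Lemma~\ref{lem:periodic-lifting}.

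The genuine difference is in (i).
\begin{itemize}
\item The paper gets existence of an invariant lift from Ces\`aro averages of $F^j_*s_*\mu$ via Krylov--Bogolyubov, which uses no contraction. It uses contraction only for uniqueness, through a $W_1$ bound on the relatively independent joining.
\item You instead use a single fiberwise-coupling estimate, valid for arbitrary (not necessarily invariant) lifts, to get both existence and uniqueness. This gives more: $F^n_*\nu\to\widetilde\mu$ weak-$*$ for every lift $\nu$ of $\mu$. So the invariant lift attracts all lifts and is given explicitly as $\lim_n F^n_*s_*\mu$.
\item The paper's route is more economical, since its existence step needs only compactness.
\end{itemize}

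One step of yours needs repair. Taking $\nu'=F_*\nu$ only bounds consecutive differences by $\omega_g(\lambda^n\diam N)$. For a merely continuous $g$ these bounds need not be summable, so they do not give the Cauchy property. Take $\nu'=F^k_*\nu$ instead, which is also a lift because $\mu$ is invariant. Your estimate then gives $\bigl|\int g\,dF^n_*\nu-\int g\,dF^{n+k}_*\nu\bigr|\le\omega_g(\lambda^n\diam N)$ uniformly in $k$, which is what you need.

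Two smaller points. First, the ``delicate point'' you flag in (iv) is not delicate. On a fiber, $d_N(F^k\hat x,F^k\hat y)\le\lambda^k d_N(\hat x,\hat y)\le d_N(\hat x,\hat y)$ for all $k\ge0$, so the Bowen metric restricted to $N_x$ coincides with $d_N$. Hence any $\eps$-net of $N_x$ is already $(n,\eps)$-spanning for every $n$, and you do not need uniform continuity of $F,\dots,F^{k_0}$. This is exactly how Lemma~\ref{lem:relative-zero} argues, phrased there with separated sets. Second, the opening display involving an ``$f^{-n}$-branch'' is not meaningful for a local homeomorphism and should simply be dropped, since you abandon it anyway.
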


\begin{proof}
The reduction formula is Theorem~\ref{thm:reduction}. Item (i) is
Theorem~\ref{thm:lift-bijection}; item (ii) follows from
Corollary~\ref{cor:average-correspondence}; and item (iii) is
Theorem~\ref{thm:ergodic-optimization}. Item (iv) follows from
Lemma~\ref{lem:relative-zero} and Theorem~\ref{thm:pressure}, while item (v)
is the pressure and equilibrium-state correspondence in
Theorem~\ref{thm:pressure}. Item (vi) follows by pulling the quotient
conclusion back through $\pi$ and adding the coboundary $u-u\circ F$;
the Liv\v{s}ic case is made explicit in Theorem~\ref{thm:livsic}. Item (vii) is the content of the forward and backward weak Gibbs transfer
theorems in Section~\ref{sec:weak-gibbs}, while item (viii) is
Theorems~\ref{thm:sequential-weak-forward} and~\ref{thm:sequential-weak-backward}, together with
Corollary~\ref{cor:sequential-gibbs-transfer}. Thus the master principle is a
synthesis of the results established below, with no additional dynamical
hypothesis hidden in the statement.
\end{proof}

\begin{remark}[Canonical normalization and gauge freedom]
\label{rem:gauge-freedom}
The quotient potential is canonical only after fixing the transfer function
$u$ by the stable-difference series \eqref{eq:u-series-main}. Indeed, if
$w\in C(M)$ and
\[
 u'=u+w\circ\pi,
 \qquad
 \psi'=\psi-w+w\circ f,
\]
then semiconjugacy gives the equally valid decomposition
\[
 \varphi=\psi'\circ\pi+u'-u'\circ F.
\]
Thus arbitrary cohomological decompositions have the natural gauge freedom
$(u,\psi)\mapsto(u+w\circ\pi,\psi-w+w\circ f)$. The phrase
``the quotient potential'' below refers to the canonical representative
associated with \eqref{eq:u-series-main}.
\end{remark}

\begin{remark}[Modular use of the transfer principle]
\label{rem:master-modular}
The reduction theorem is the only input needed to remove the fiber dependence
of the potential. The subsequent conclusions are not all consequences of
cohomological reduction alone. In particular, entropy and pressure require
the relative entropy argument of Section~\ref{sec:pressure}, while
subcohomology and Liv\v{s}ic theory require a theorem on the quotient.
Likewise, weak Gibbs transfer requires the quantitative Bowen-mass
hypotheses of Section~\ref{sec:weak-gibbs}. Thus the theorem is to be read as a modular transfer
statement rather than as a single theorem in which all conclusions follow
from fiber contraction alone.
\end{remark}

\begin{remark}
The significance of Theorem~\ref{thm:master-reduction} is not that each
individual conclusion is new in isolation. Rather, a single geometric
reduction converts a fiber-dependent potential into a quotient potential
and thereby makes several theories simultaneously accessible. In
particular, the theorem can be used as a ``preprocessing'' step before
applying a quotient result that is already known for expanding, zooming,
symbolic, or non-uniformly expanding dynamics.
\end{remark}

\begin{proposition}[Summable-defect H\"older criterion]
\label{prop:holder-reduction}
For $j\ge0$ define the fiber defect
\begin{equation}
\Delta_j(\hat x)
:=\varphi(F^j\hat x)-\varphi(F^jr(\hat x)).
\label{eq:defect-delta}
\end{equation}
Assume that, for some $0<\alpha\le1$, each $\Delta_j$ is
$\alpha$-H\"older and that
\begin{equation}
\sum_{j=0}^{\infty}[\Delta_j]_\alpha<\infty.
\label{eq:defect-holder-sum}
\end{equation}
Then the transfer function $u=\sum_{j\ge0}\Delta_j$ is
$\alpha$-H\"older and
\begin{equation}
[u]_\alpha\le\sum_{j=0}^{\infty}[\Delta_j]_\alpha.
\label{eq:u-holder-bound}
\end{equation}
If, in addition, $F$ is Lipschitz, $\alpha\le\beta$, and the section
$s:M\to N$ is $\sigma$-H\"older for some $0<\sigma\le1$, then $\bar\varphi$ is
$\alpha$-H\"older and $\psi=\bar\varphi\circ s$ is
$\alpha\sigma$-H\"older.
\end{proposition}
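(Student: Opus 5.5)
The plan is to treat the two assertions separately. For the first, I will use the fact, established in Theorem~\ref{thm:reduction}, that $u=\sum_{j\ge0}\Delta_j$ converges pointwise (indeed uniformly). For $\hat x\neq\hat y$ I will write
\[
|u(\hat x)-u(\hat y)|\le\sum_{j=0}^{\infty}|\Delta_j(\hat x)-\Delta_j(\hat y)|
\le\Bigl(\sum_{j=0}^{\infty}[\Delta_j]_\alpha\Bigr)d_N(\hat x,\hat y)^\alpha .
\]
Passing to the limit of partial sums is legitimate because each term is dominated and the series of H\"older seminorms is finite by \eqref{eq:defect-holder-sum}. Dividing by $d_N(\hat x,\hat y)^\alpha$ and taking the supremum gives \eqref{eq:u-holder-bound}. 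This part is routine.

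For the second assertion, I will first show that $\bar\varphi=\varphi-u+u\circ F$ is $\alpha$-H\"older by treating its three terms in turn.
\begin{itemize}
\item For $\varphi$: it is $\beta$-H\"older on a compact space and $\alpha\le\beta$, so $|\varphi(\hat x)-\varphi(\hat y)|\le[\varphi]_\beta\diam(N)^{\beta-\alpha}d_N(\hat x,\hat y)^\alpha$, and $\varphi$ is $\alpha$-H\"older.
\item For $u$: it is $\alpha$-H\"older by the first part.
\item For $u\circ F$: if $F$ is $L$-Lipschitz, then $|u(F\hat x)-u(F\hat y)|\le[u]_\alpha L^\alpha d_N(\hat x,\hat y)^\alpha$.
\end{itemize}
Summing the three estimates gives $[\bar\varphi]_\alpha\le[\varphi]_\beta\diam(N)^{\beta-\alpha}+(1+L^\alpha)[u]_\alpha$.

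Finally, $\psi=\bar\varphi\circ s$ by the construction in the proof of Theorem~\ref{thm:reduction}. Composing an $\alpha$-H\"older map with a $\sigma$-H\"older map gives
\[
|\psi(x)-\psi(y)|\le[\bar\varphi]_\alpha[s]_\sigma^\alpha d_M(x,y)^{\alpha\sigma},
\]
so $\psi$ is $\alpha\sigma$-H\"older.

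There is no real obstacle. The only points needing care are the termwise passage to the limit in the first step, which is justified by pointwise convergence, and remembering to use compactness of $N$ (finite diameter) to downgrade the $\beta$-H\"older regularity of $\varphi$ to $\alpha$-H\"older.
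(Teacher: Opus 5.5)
Your proposal is correct and follows the paper's proof essentially step for step. The steps match: the termwise H\"older estimate for $u$ via pointwise convergence, downgrading $\varphi$ from $\beta$- to $\alpha$-H\"older using compactness of $N$, the Lipschitz bound for $u\circ F$, and the composition estimate $|\psi(x)-\psi(y)|\le[\bar\varphi]_\alpha[s]_\sigma^\alpha d_M(x,y)^{\alpha\sigma}$. Your explicit constant $[\bar\varphi]_\alpha\le[\varphi]_\beta\diam(N)^{\beta-\alpha}+(1+L^\alpha)[u]_\alpha$ is a harmless quantitative refinement of what the paper states only qualitatively.
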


\begin{proof}
For $\hat x,\hat y\in N$, absolute convergence and
\eqref{eq:defect-holder-sum} give
\begin{align*}
|u(\hat x)-u(\hat y)|
&\le\sum_{j=0}^{\infty}
|\Delta_j(\hat x)-\Delta_j(\hat y)|\\
&\le\left(\sum_{j=0}^{\infty}[\Delta_j]_\alpha\right)
 d_N(\hat x,\hat y)^\alpha.
\end{align*}
This proves \eqref{eq:u-holder-bound}. If $F$ is Lipschitz, then
$u\circ F$ is $\alpha$-H\"older. Since $\alpha\le\beta$ and $N$ is
compact, every $\beta$-H\"older function is also $\alpha$-H\"older;
therefore
$\bar\varphi=\varphi-u+u\circ F$ is $\alpha$-H\"older. Finally, writing
\[
[s]_\sigma
:=\sup_{x\neq y}\frac{d_N(s(x),s(y))}{d_M(x,y)^\sigma},
\]
we obtain
\[
|\psi(x)-\psi(y)|
=|\bar\varphi(s(x))-\bar\varphi(s(y))|
\le [\bar\varphi]_\alpha [s]_\sigma^\alpha
 d_M(x,y)^{\alpha\sigma}.
\]
\end{proof}

\begin{remark}
Condition \eqref{eq:defect-holder-sum} is an analytic transverse
summability criterion, not a consequence of fiber contraction alone. In a
concrete hyperbolic, expanding, or zooming model it should be verified from
the available holonomy, inverse-branch, or distortion estimates. The
advantage of the formulation is that the exact regularity input needed by
the proof is explicit and no unquantified phrase such as ``the same
estimates imply H\"older regularity'' is used.
\end{remark}

\section{Subcohomology}

\subsection{The quotient property}

\begin{definition}[Subcohomological property]
A dynamical system $(M,f)$ has property $\mathrm{SC}$ for a class
$\mathcal C$ of potentials if, for every $\psi\in\mathcal C$,
\[
m(\psi,f)\geq0
\quad\Longrightarrow\quad
\exists v\in C(M):
\qquad
\psi\geq v-v\circ f.
\]
\end{definition}

\begin{theorem}[Subcohomology transfer]
\label{thm:SC-transfer}
Assume the cohomological reduction
\[
\varphi=\psi\circ\pi+u-u\circ F
\]
and suppose $(M,f)$ has property $\mathrm{SC}$ for the class containing
$\psi$. If
\[
m(\varphi,F)\geq0,
\]
then there exists $U\in C(N)$ such that
\[
\boxed{\varphi\geq U-U\circ F.}
\]
If $u$ and $v\circ\pi$ are H\"older, where
\[
\psi\geq v-v\circ f,
\]
then
\[
U=u+v\circ\pi
\]
is H\"older. In particular, this conclusion holds if $v$ is
$\alpha$-H\"older and $\pi$ is $\tau$-H\"older, in which case
$v\circ\pi$ is $\alpha\tau$-H\"older.
\end{theorem}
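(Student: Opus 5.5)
The plan is to transfer the hypothesis $m(\varphi,F)\ge0$ down to $m(\psi,f)\ge0$, apply property $\mathrm{SC}$ on the quotient, and pull the resulting subaction back through $\pi$, absorbing the coboundary $u-u\circ F$.

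\textbf{Step 1: $m(\psi,f)\ge0$.} Let $\mu\in\M_f(M)$. I would lift $\mu$ to some $\widetilde\mu\in\M_F(N)$ with $\pi_*\widetilde\mu=\mu$. To avoid relying on results proved later, I would construct the lift directly: take any Borel probability $\nu$ on $N$ with $\pi_*\nu=\mu$, for instance $\nu=s_*\mu$ using the continuous section $s$ from Assumption~\ref{ass:selector}. Then take a weak-$*$ limit point $\widetilde\mu$ of the Ces\`aro averages $\frac1n\sum_{k<n}F^k_*\nu$. Because $\pi\circ F=f\circ\pi$ and $\mu$ is $f$-invariant, each average projects to $\mu$. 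Since $\pi_*$ is weak-$*$ continuous, the limit is $F$-invariant and still projects to $\mu$.

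Integrating the identity $\varphi=\psi\circ\pi+u-u\circ F$ against $\widetilde\mu$ gives
\[
\int_N\varphi\,d\widetilde\mu=\int_M\psi\,d\mu .
\]
This uses that $u$ is continuous (Theorem~\ref{thm:reduction}) and hence $\widetilde\mu$-integrable, so $\int u\,d\widetilde\mu=\int u\circ F\,d\widetilde\mu$ by invariance. Hence $\int\psi\,d\mu\ge m(\varphi,F)\ge0$ for every $\mu$, so $m(\psi,f)\ge0$. This lifting step is the only point requiring care, and it is resolved by the section together with Krylov--Bogolyubov averaging.

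\textbf{Step 2: apply $\mathrm{SC}$ and pull back.} Property $\mathrm{SC}$ yields $v\in C(M)$ with $\psi\ge v-v\circ f$. Composing with $\pi$ and using $v\circ f\circ\pi=v\circ\pi\circ F$ gives
\[
\psi\circ\pi\ge v\circ\pi-(v\circ\pi)\circ F .
\]
Adding $u-u\circ F$ to both sides yields $\varphi\ge U-U\circ F$ with $U=u+v\circ\pi$, which is continuous.

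\textbf{Step 3: regularity.} If $u$ and $v\circ\pi$ are H\"older, with exponents $a$ and $b$, then on the compact space $N$ both are $\min(a,b)$-H\"older, so their sum $U$ is H\"older. Finally, if $v$ is $\alpha$-H\"older and $\pi$ is $\tau$-H\"older, then
\[
|v(\pi\hat x)-v(\pi\hat y)|\le[v]_\alpha[\pi]_\tau^\alpha\,d_N(\hat x,\hat y)^{\alpha\tau},
\]
so $v\circ\pi$ is $\alpha\tau$-H\"older.
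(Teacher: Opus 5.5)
Your proposal is correct and follows the paper's proof: you reduce $m(\varphi,F)\ge0$ to $m(\psi,f)\ge0$, apply $\mathrm{SC}$ to obtain $v$, and pull back via $\pi\circ F=f\circ\pi$ with $U=u+v\circ\pi$. The only difference is that the paper cites Corollary~\ref{cor:average-correspondence}, which gives the equality $m(\varphi,F)=m(\psi,f)$ through the bijection of Theorem~\ref{thm:lift-bijection}, whereas you rebuild inline only the existence of invariant lifts (via $s_*\mu$ and Ces\`aro averaging), rightly noting that the inequality $m(\psi,f)\ge m(\varphi,F)$ is all that is needed.
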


\begin{proof}
By Corollary~\ref{cor:average-correspondence},
\[
m(\psi,f)=m(\varphi,F)\geq0.
\]
The property $\mathrm{SC}$ gives $v\in C(M)$ satisfying
\[
\psi\geq v-v\circ f.
\]
Set
\[
U=u+v\circ\pi.
\]
Using $\pi\circ F=f\circ\pi$,
\begin{align}
U-U\circ F
&=
u-u\circ F
+
(v-v\circ f)\circ\pi\\
&\leq
u-u\circ F+\psi\circ\pi\\
&=\varphi.
\end{align}
This proves the result.
\end{proof}

\begin{corollary}[Zooming quotients]
\label{cor:zooming}
Suppose $f$ is a zooming system satisfying the hypotheses of
Mbarki--Santana. If the fibered extension satisfies the reduction
hypotheses and the induced potential $\psi$ belongs to the regularity class
required by the quotient subcohomology theorem, then every H\"older potential
$\varphi$ with $m(\varphi,F)\geq0$ admits a continuous subaction
\[
\varphi\geq U-U\circ F.
\]
When H\"older regularity of the lifted subaction is desired, one must in
addition verify the regularity of the pullback $v\circ\pi$. For example, if
the quotient theorem gives an $\alpha$-H\"older subaction $v$ and the
factor map $\pi$ is $\tau$-H\"older, then $v\circ\pi$ is
$\alpha\tau$-H\"older; together with the regularity of $u$ this gives a
H\"older lifted subaction. In the product skew-products considered below,
$\pi$ is Lipschitz.
\end{corollary}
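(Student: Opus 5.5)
The corollary is essentially Theorem~\ref{thm:SC-transfer} specialized to a zooming quotient, so the plan is to check that its hypotheses hold and then invoke it. First I apply Theorem~\ref{thm:reduction} to the H\"older potential $\varphi$. This gives the canonical continuous transfer function $u$ from the stable-difference series \eqref{eq:u-series-main}, together with the continuous quotient potential $\psi$ satisfying $\varphi=\psi\circ\pi+u-u\circ F$. By hypothesis, $\psi$ lies in the regularity class required by the Mbarki--Santana subcohomology theorem for the zooming system $f$. That theorem therefore says that $(M,f)$ has property $\mathrm{SC}$ for a class containing $\psi$.

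Next I transfer the sign condition. By Corollary~\ref{cor:average-correspondence}, assumed from later in the paper and resting on the bijection $\pi_*:\M_F(N)\to\M_f(M)$ and on the vanishing of integrals of coboundaries, we have $m(\psi,f)=m(\varphi,F)\ge0$. Property $\mathrm{SC}$ then yields $v\in C(M)$ with $\psi\ge v-v\circ f$. Put $U=u+v\circ\pi$. Using $\pi\circ F=f\circ\pi$, we get $U-U\circ F=u-u\circ F+(v-v\circ f)\circ\pi\le u-u\circ F+\psi\circ\pi=\varphi$. Continuity of $U$ follows from continuity of $u$, $v$ and $\pi$. All of this is exactly the first part of Theorem~\ref{thm:SC-transfer}, which I would cite directly.

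For the regularity statement, suppose $v$ is $\alpha$-H\"older and $\pi$ is $\tau$-H\"older. Then $|v(\pi\hat x)-v(\pi\hat y)|\le[v]_\alpha[\pi]_\tau^\alpha d_N(\hat x,\hat y)^{\alpha\tau}$, so $v\circ\pi$ is $\alpha\tau$-H\"older. If in addition $u$ is H\"older, for instance via Proposition~\ref{prop:holder-reduction} under its summable-defect hypothesis, then $U$ is a sum of H\"older functions. Since $N$ is compact, each summand is H\"older with the minimum of the two exponents, so $U$ is H\"older with that exponent. For the product skew-products, the projection $\pi(x,y)=x$ is $1$-Lipschitz for the product (or max) metric, which gives $\tau=1$.

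The only real obstacle is bookkeeping rather than mathematics. I have to make sure that the quotient theorem's hypotheses (zooming structure, regularity class of $\psi$) are assumed rather than derived. The point is that Theorem~\ref{thm:reduction} only guarantees that $\psi$ is continuous, so Hölder regularity of $\psi$ must either be taken as a hypothesis, as the statement does, or be supplied through Proposition~\ref{prop:holder-reduction}.
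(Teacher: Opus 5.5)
Your proposal is correct and follows the paper's route. The corollary is Theorem~\ref{thm:SC-transfer} with property $\mathrm{SC}$ supplied by the Mbarki--Santana theorem applied to $\psi$, and the H\"older clause is the composition estimate for $v\circ\pi$ already recorded in that theorem's statement. Your caveat also matches the paper's: H\"older regularity of $\psi$ and $u$ must be assumed or obtained from Proposition~\ref{prop:holder-reduction}, since Theorem~\ref{thm:reduction} guarantees only continuity.
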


\begin{remark}[Why the critical case cannot be replaced by the elementary
weighted series]
The weighted construction
\[
v=\sum_{j=0}^{\infty}\theta^j|\phi\circ T^j|
\]
gives
\[
v\circ T-v
=
\left(\frac1\theta-1\right)
\sum_{j=1}^{\infty}\theta^j|\phi\circ T^j|
-|\phi|.
\]
The uniform estimate for the first term is
\[
\left(\frac1\theta-1\right)
\frac{\theta}{1-\theta}\|\phi\|_\infty
=
\|\phi\|_\infty.
\]
Thus this term does \emph{not} tend to zero as $\theta\to1^-$. Hence
this weighted-series estimate cannot establish the critical case. The
regime $m(\phi,T)=0$ requires a genuine subcohomological theorem.
\end{remark}

\subsection{Positive margin and a Mañ\'e-type construction}

For a continuous map $T:X\to X$ of a compact metric space and
$\phi\in C(X)$, write
\[
 m(\phi,T)
 :=\min_{\mu\in\mathcal M_T(X)}\int_X\phi\,d\mu.
\]
The minimum exists by compactness of $\mathcal M_T(X)$ and weak-$*$
continuity of the integral. The positive-margin case admits a direct argument
which is conceptually different from the critical case.

\begin{lemma}[Uniform growth of Birkhoff sums]
\label{lem:uniform-growth}
Let $T:X\to X$ be continuous on compact $X$ and $\phi\in C(X)$. If
\[
m(\phi,T)>0,
\]
then there exist $c>0$ and $N_0\in\mathbb N$ such that
\[
S_n\phi(x)
=
\sum_{j=0}^{n-1}\phi(T^jx)
\geq cn
\]
for every $x\in X$ and every $n\geq N_0$.
\end{lemma}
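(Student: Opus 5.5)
We argue by contradiction, using weak-$*$ compactness of probability measures together with the Krylov--Bogolyubov construction of invariant measures from empirical measures. Set $m:=m(\phi,T)>0$ and fix $c:=m/2$. We claim that there is $N_0$ such that $\frac1n S_n\phi(x)\ge c$ for all $x\in X$ and $n\ge N_0$; this is exactly the assertion with this $c$.

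Suppose the claim fails. Then there are $n_k\to\infty$ and points $x_k\in X$ with $S_{n_k}\phi(x_k)<c\,n_k$. Consider the empirical measures
\[
\mu_k:=\frac1{n_k}\sum_{j=0}^{n_k-1}\delta_{T^jx_k},
\qquad\text{so that}\qquad
\int_X\phi\,d\mu_k=\frac1{n_k}S_{n_k}\phi(x_k)<c .
\]
By compactness of the space of Borel probability measures on $X$ in the weak-$*$ topology, we may pass to a subsequence with $\mu_k\to\mu$ weak-$*$.

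The main step is to check that $\mu$ is $T$-invariant. For $g\in C(X)$ the sum telescopes:
\[
\int g\circ T\,d\mu_k-\int g\,d\mu_k=\frac1{n_k}\bigl(g(T^{n_k}x_k)-g(x_k)\bigr),
\]
and the right-hand side has absolute value at most $2\|g\|_\infty/n_k\to0$. Since $T$ is continuous, $g\circ T\in C(X)$, so we may pass to the limit on the left and obtain $\int g\circ T\,d\mu=\int g\,d\mu$. Thus $\mu\in\mathcal M_T(X)$. This is the standard Krylov--Bogolyubov argument; continuity of $T$ is the only ingredient needed.

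Finally, $\phi$ is continuous, so weak-$*$ convergence gives
\[
\int\phi\,d\mu=\lim_k\int\phi\,d\mu_k\le c=\frac m2<m .
\]
This contradicts the definition of $m$ as the minimum of $\int\phi\,d\nu$ over $\nu\in\mathcal M_T(X)$. Hence the claim holds with $c=m/2$ and some $N_0$, which proves the lemma. The uniformity in $x$ comes for free, because the contradiction argument allows the points $x_k$ to vary with $k$.
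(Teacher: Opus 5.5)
Your proof is correct and follows the paper's own argument: argue by contradiction, form the empirical measures along $(n_k,x_k)$, take a weak-$*$ limit, show it is invariant by telescoping, and derive $\int\phi\,d\mu\le c<m(\phi,T)$. The only cosmetic difference is that you fix $c=m(\phi,T)/2$, whereas the paper allows any $0<c<m(\phi,T)$.
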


\begin{proof}
Choose $0<c<m(\phi,T)$. Suppose the conclusion is false. Then there are
$n_k\to\infty$ and $x_k\in X$ such that
\[
S_{n_k}\phi(x_k)<cn_k.
\]
Define the empirical measures
\[
\mu_k
=
\frac1{n_k}
\sum_{j=0}^{n_k-1}\delta_{T^jx_k}.
\]
By compactness of the space of probability measures, after passing to a
subsequence $\mu_k\to\mu$ in the weak-$*$ topology. Standard telescoping
gives, for every $\psi\in C(X)$,
\[
\left|
\int\psi\circ T\,d\mu_k-\int\psi\,d\mu_k
\right|
\leq
\frac{2\|\psi\|_\infty}{n_k}\to0.
\]
Hence $\mu$ is $T$-invariant. On the other hand,
\[
\int\phi\,d\mu
=
\lim_{k\to\infty}
\frac{S_{n_k}\phi(x_k)}{n_k}
\leq c<m(\phi,T),
\]
a contradiction.
\end{proof}

\begin{proposition}[Positive-margin subaction]
\label{prop:positive}
If $m(\phi,T)>0$, then there exists $u\in C(X)$ such that
\[
\phi\geq u-u\circ T.
\]
\end{proposition}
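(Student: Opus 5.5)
We will use Lemma~\ref{lem:uniform-growth} together with a Mañé-type infimum of Birkhoff sums. Fix $c>0$ and $N_0$ as in Lemma~\ref{lem:uniform-growth}, so that $S_n\phi(x)\ge cn>0$ for all $x\in X$ and all $n\ge N_0$. Define
\[
u(x):=\min_{0\le n\le N_0} S_n\phi(x),
\qquad S_0\phi:=0 .
\]
This is a minimum of finitely many continuous functions, so $u\in C(X)$. Because $S_0\phi=0$, we also have $u\le0$.

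The key observation is that $u$ agrees with the infimum over all $n\ge0$. For $n>N_0$, apply the lemma to the sum $S_n\phi$ itself. More robustly, apply it to the tail: write $S_n\phi(x)=S_k\phi(x)+S_{n-k}\phi(T^kx)$ with $k\le N_0$ and $n-k\ge N_0$, chosen by stepping down in blocks. Then $S_n\phi(x)\ge S_k\phi(x)\ge u(x)$. The simplest choice is $k=0$ when $n\ge N_0$, which gives $S_n\phi(x)\ge cn\ge0\ge u(x)$. Hence
\[
u(x)=\inf_{n\ge0}S_n\phi(x).
\]

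Now apply the cocycle identity $S_{n+1}\phi(x)=\phi(x)+S_n\phi(Tx)$. It gives
\[
u(x)\le \inf_{n\ge0}S_{n+1}\phi(x)=\phi(x)+\inf_{n\ge0}S_n\phi(Tx)=\phi(x)+u(Tx).
\]
That is exactly $\phi\ge u-u\circ T$.

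The only delicate point is ensuring continuity of the infimum. This is why we truncate at $N_0$: on $X$, the infimum over all $n$ of continuous functions is a priori only upper semicontinuous. The uniform lower bound from Lemma~\ref{lem:uniform-growth}, valid for all $x$ simultaneously, is what makes the truncation exact.

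If we prefer to avoid invoking $S_0$, we can instead set $u=\min_{1\le n\le N_0}S_n\phi$ and use $\min(0,\cdot)$. Including $n=0$ is cleaner, since it makes the inequality $S_n\phi\ge u$ for $n\ge N_0$ immediate from $cn\ge0$.
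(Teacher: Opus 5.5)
Your proof is correct and follows the paper's route: the paper defines $u=\inf_{n\ge0}S_n\phi$, uses Lemma~\ref{lem:uniform-growth} to reduce it to a finite minimum (hence continuous), and then obtains $\phi(x)+u(Tx)=\inf_{m\ge1}S_m\phi(x)\ge u(x)$ from the cocycle identity. You start from the truncated minimum and show it equals the full infimum instead, which is only a change of order; the ``stepping down in blocks'' remark is unnecessary, since the choice $k=0$ already settles that step.
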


\begin{proof}
By Lemma~\ref{lem:uniform-growth}, choose $c>0$ and $N_0$ such that
$S_n\phi(x)\ge cn$ for every $n\ge N_0$ and every $x$. Define
\begin{equation}
 u(x)=\inf_{n\ge0}S_n\phi(x),\qquad S_0\phi=0.
 \label{eq:positive-inf}
\end{equation}
For $n\ge N_0$ one has $S_n\phi(x)\ge cn>0=S_0\phi(x)$. Hence
\[
 u(x)=\min_{0\le n<N_0}S_n\phi(x),
\]
a minimum of finitely many continuous functions; therefore $u$ is
continuous.

Moreover,
\begin{align*}
\phi(x)+u(Tx)
&=\phi(x)+\inf_{n\ge0}S_n\phi(Tx)\\
&=\inf_{n\ge0}\bigl(\phi(x)+S_n\phi(Tx)\bigr)\\
&=\inf_{m\ge1}S_m\phi(x)\\
&\ge \inf_{m\ge0}S_m\phi(x)=u(x).
\end{align*}
Thus $\phi(x)\ge u(x)-u(Tx)$ for every $x$.
\end{proof}

\begin{remark}
For H\"older regularity, the proposition should be combined with the
regularity theorem available for the quotient class. In particular, for
zooming systems one should invoke the H\"older part of the
Mbarki--Santana theorem rather than infer H\"older regularity merely from
the infimum construction.
\end{remark}

\section{Liv\v{s}ic Transfer}

Uniform fiber contraction already implies periodic lifting; it is not an
additional hypothesis in the present compact setting.

\begin{lemma}[Automatic periodic lifting]
\label{lem:periodic-lifting}
If $p\in M$ satisfies $f^np=p$, then there exists a unique
$\hat p\in N_p$ such that $F^n\hat p=\hat p$.
\end{lemma}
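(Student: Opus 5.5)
The plan is to apply the Banach contraction principle to the return map on the fiber over $p$. Since $f^np=p$, relation \eqref{eq:fiber-map} iterated $n$ times gives $F^n(N_p)\subset N_{f^np}=N_p$. Set $G:=F^n|_{N_p}:N_p\to N_p$.

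\emph{Completeness of the fiber.} The fiber $N_p=\pi^{-1}(p)$ is closed in the compact space $N$, because $\pi$ is continuous and $\{p\}$ is closed. Hence $N_p$ is compact, so it is a complete metric space with the restricted metric $d_N$. It is also nonempty, since $\pi$ is surjective.

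\emph{Contraction.} For $\hat x,\hat y\in N_p$, the uniform fiber contraction assumption gives
\[
d_N(G\hat x,G\hat y)=d_N(F^n\hat x,F^n\hat y)\le\lambda^n d_N(\hat x,\hat y).
\]
We may take $n\ge1$, since $n=0$ is trivial or excluded by convention. Then $\lambda^n<1$, so $G$ is a strict contraction. The Banach fixed point theorem yields a unique $\hat p\in N_p$ with $F^n\hat p=\hat p$, which gives both existence and uniqueness. Uniqueness also follows directly: two fixed points $\hat p,\hat q\in N_p$ satisfy $d(\hat p,\hat q)\le\lambda^n d(\hat p,\hat q)$, which forces $d(\hat p,\hat q)=0$.

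I do not expect a genuine obstacle. The only points to verify are the invariance $F^n(N_p)\subset N_p$, which comes from the semiconjugacy, and the completeness of the fiber, which comes from compactness and continuity of $\pi$. The continuous-selector assumption is not needed.

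As a consistency remark, uniqueness for the minimal period also implies that $\hat p$ has the same minimal period as $p$. Indeed, if $p$ has minimal period $k\mid n$, the unique $F^k$-fixed point in $N_p$ is also $F^n$-fixed. By uniqueness it coincides with $\hat p$.
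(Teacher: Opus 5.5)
Your proof is correct and is the same as the paper's: $F^n$ maps the compact, hence complete, fiber $N_p$ into itself as a $\lambda^n$-contraction, so Banach's fixed-point theorem gives existence and uniqueness of $\hat p$. One small correction: the case $n=0$ is not ``trivial''. There $F^0$ is the identity, so uniqueness fails unless $N_p$ is a singleton; $n\ge1$ is a genuine implicit requirement of the statement, not just a convention.
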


\begin{proof}
Because $\pi$ is continuous and $N$ is compact, the fiber
$N_p=\pi^{-1}(p)$ is compact and hence complete. Since
$f^np=p$, semiconjugacy gives $F^n(N_p)\subset N_p$. For
$\hat x,\hat y\in N_p$,
\[
 d_N(F^n\hat x,F^n\hat y)\le\lambda^n d_N(\hat x,\hat y).
\]
Thus $F^n|_{N_p}$ is a strict contraction of the complete metric space
$N_p$ into itself. The Banach fixed-point theorem gives a unique fixed
point $\hat p\in N_p$.
\end{proof}

\begin{theorem}[Liv\v{s}ic transfer]
\label{thm:livsic}
Assume the reduction theorem and suppose that the quotient admits a
Liv\v{s}ic theorem for the regularity class of the induced potential
$\psi$. Let $\varphi$ satisfy
\[
\sum_{j=0}^{n-1}\varphi(F^j\hat p)=0
\]
whenever $F^n\hat p=\hat p$. Then there exists a quotient transfer $v$
with $\psi=v-v\circ f$, and
\[
 \varphi=U-U\circ F,\qquad U=u+v\circ\pi.
\]
In particular, if $u$ and $v\circ\pi$ are H\"older, then $U$ is
H\"older.
\end{theorem}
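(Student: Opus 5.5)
The plan is to push the periodic-orbit hypothesis on $\varphi$ down to $\psi$, apply the quotient Liv\v{s}ic theorem there, and then lift the resulting transfer function back exactly as in Theorem~\ref{thm:SC-transfer}.

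\textbf{Step 1: vanishing periodic sums for $\psi$.} Let $p\in M$ with $f^np=p$. By Lemma~\ref{lem:periodic-lifting} there is a unique $\hat p\in N_p$ with $F^n\hat p=\hat p$. Summing the reduction identity \eqref{eq:main-cohomology} along the orbit $\hat p,F\hat p,\dots,F^{n-1}\hat p$ and using $\pi\circ F^j=f^j\circ\pi$ gives
\[
\sum_{j=0}^{n-1}\varphi(F^j\hat p)
=\sum_{j=0}^{n-1}\psi(f^jp)+u(\hat p)-u(F^n\hat p).
\]
The coboundary term vanishes because $F^n\hat p=\hat p$, and the left side vanishes by hypothesis. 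Hence $\sum_{j=0}^{n-1}\psi(f^jp)=0$ for every periodic point $p$ of $f$ and every period $n$. Since $p$ was an arbitrary periodic point, no periodic data of $f$ is missed.

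\textbf{Step 2: quotient Liv\v{s}ic theorem.} By assumption $\psi$ lies in the regularity class for which the quotient Liv\v{s}ic theorem holds; this is where Proposition~\ref{prop:holder-reduction} or its analogue is needed. That theorem then yields $v$, in the class it provides (at least continuous), with $\psi=v-v\circ f$.

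\textbf{Step 3: lift the transfer function.} Set $U=u+v\circ\pi$. Using $v\circ f\circ\pi=v\circ\pi\circ F$, we get
\[
U-U\circ F=u-u\circ F+(v-v\circ f)\circ\pi=u-u\circ F+\psi\circ\pi=\varphi.
\]
For regularity, $U$ is a sum of two H\"older functions when $u$ and $v\circ\pi$ are H\"older, with exponent the minimum of the two. In particular this holds if $v$ is $\alpha$-H\"older and $\pi$ is $\tau$-H\"older, as in Theorem~\ref{thm:SC-transfer}.

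\textbf{Main obstacle.} The algebra is routine. The only substantive point is Step 1: every periodic orbit of the quotient must be tested. Lemma~\ref{lem:periodic-lifting} supplies a periodic lift with the same period for each such orbit, so the hypothesis on $\varphi$ applies to it. The remaining care is regularity bookkeeping: checking that $\psi$ actually belongs to the class the quotient theorem requires, which is an assumption of the statement rather than something proved here.
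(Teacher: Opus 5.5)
Your proposal is correct and follows the paper's proof essentially step for step. You lift each periodic orbit of $f$ via Lemma~\ref{lem:periodic-lifting}, telescope the reduction identity so the $u$-terms cancel, apply the quotient Liv\v{s}ic theorem to $\psi$, and pull back with $U=u+v\circ\pi$.
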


\begin{proof}
Write $\varphi=\psi\circ\pi+u-u\circ F$. If $f^np=p$, let
$\hat p\in N_p$ be the fixed point of $F^n$ given by
Lemma~\ref{lem:periodic-lifting}. Then
\begin{align*}
0
&=\sum_{j=0}^{n-1}\varphi(F^j\hat p)\\
&=\sum_{j=0}^{n-1}\psi(f^jp)+u(\hat p)-u(F^n\hat p)\\
&=\sum_{j=0}^{n-1}\psi(f^jp).
\end{align*}
Hence the periodic obstruction vanishes for $\psi$. The quotient
Liv\v{s}ic theorem yields $\psi=v-v\circ f$, and semiconjugacy gives
\[
\varphi=(v-v\circ f)\circ\pi+u-u\circ F
=(u+v\circ\pi)-(u+v\circ\pi)\circ F.
\]
\end{proof}

\begin{corollary}[Liv\v{s}ic from two-sided subcohomology]
\label{cor:two-sided-sc-livsic}
Assume that periodic points of $f$ are dense and that the quotient has
property $\mathrm{SC}$ for both $\psi$ and $-\psi$. Suppose
\[
\int_N\varphi\,d\widetilde\mu=0
\qquad\text{for every }\widetilde\mu\in\M_F(N).
\]
Then there exists $U\in C(N)$ such that $\varphi=U-U\circ F$.
If the quotient subactions and the reduction transfer have H\"older
regularity, then $U$ is H\"older.
\end{corollary}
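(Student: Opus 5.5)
Via Corollary~\ref{cor:average-correspondence}, we have $m(\psi,f)=m(\varphi,F)$ and $m(-\psi,f)=m(-\varphi,F)$. The hypothesis gives $\int\varphi\,d\widetilde\mu=0$ for every $\widetilde\mu\in\M_F(N)$, so both minima vanish, and property $\mathrm{SC}$ can be applied to $\psi$ and to $-\psi$. (Equivalently, the bijection $\pi_*$ together with the equality of integrals shows $\int\psi\,d\mu=0$ for every $\mu\in\M_f(M)$.) This yields $v_1,v_2\in C(M)$ with
\[
\psi\ge v_1-v_1\circ f,\qquad -\psi\ge v_2-v_2\circ f .
\]

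Next I will form $w:=v_1+v_2$. Adding the two inequalities gives $0\ge w-w\circ f$, that is, $w\circ f\ge w$ everywhere. I claim $w$ is constant on the closure of each periodic orbit. Indeed, if $f^np=p$, then $w(p)\le w(fp)\le\dots\le w(f^np)=w(p)$, so equality holds along the orbit. Hence each inequality is an equality at every periodic point. For instance, $\psi(p)-v_1(p)+v_1(fp)\ge0$ and $-\psi(p)-v_2(p)+v_2(fp)\ge0$ sum to $w(fp)-w(p)=0$, which forces both to vanish. Therefore $\psi=v_1-v_1\circ f$ on the set of periodic points. Both sides are continuous and periodic points are dense, so $\psi=v_1-v_1\circ f$ on all of $M$. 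The key point is that density lets a pointwise identity on a dense set propagate by continuity. Nothing else is needed; in particular, this avoids any claim that $w$ is globally constant.

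Finally, I set $U=u+v_1\circ\pi$, where $u$ is the transfer function of Theorem~\ref{thm:reduction}. The semiconjugacy, exactly as in the proof of Theorem~\ref{thm:livsic}, gives
\[
\varphi=\psi\circ\pi+u-u\circ F=U-U\circ F .
\]
For regularity: if $u$ is H\"older (for example via Proposition~\ref{prop:holder-reduction}) and $v_1\circ\pi$ is H\"older (for example $v_1$ H\"older and $\pi$ H\"older, as in Theorem~\ref{thm:SC-transfer}), then $U$ is H\"older.

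The main obstacle is the middle step: upgrading two one-sided subactions to an exact coboundary. This works because the sum $w$ is monotone along orbits, so the defect vanishes on periodic points, and density of periodic points then finishes the argument. An alternative would be to route through Theorem~\ref{thm:livsic}, but the corollary does not assume a quotient Liv\v{s}ic theorem, so the direct argument above is the one I would use.
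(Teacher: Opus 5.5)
Your proposal is correct and matches the paper's proof: the paper likewise obtains $v_1$ and, from $\mathrm{SC}$ for $-\psi$, a second subaction, sandwiches $\psi$ between $v_1-v_1\circ f$ and $v_2-v_2\circ f$, shows the nonnegative gap $q=w-w\circ f$ (with $w=v_2-v_1$, the negative of your $w$) vanishes on periodic orbits by telescoping, and extends this by density and continuity. It then lifts via $U=u+v_1\circ\pi$, exactly as you do.
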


\begin{proof}
By Corollary~\ref{cor:average-correspondence}, every
$\mu\in\M_f(M)$ satisfies $\int\psi\,d\mu=0$. Property $\mathrm{SC}$ applied to $\psi$ gives a continuous $v_1$ with
\[
v_1-v_1\circ f\le\psi.
\]
Applying $\mathrm{SC}$ to $-\psi$ gives a continuous function $\widetilde v_2$
with
\[
\widetilde v_2-\widetilde v_2\circ f\le-\psi.
\]
Set $v_2=-\widetilde v_2$. Then
\[
\psi\le v_2-v_2\circ f,
\]
and therefore
\[
v_1-v_1\circ f\le\psi\le v_2-v_2\circ f.
\]
Set $w=v_2-v_1$. Then $q:=w-w\circ f\ge0$. If $p$ is periodic of
period $n$, telescoping gives
\[
0=w(p)-w(f^np)=\sum_{j=0}^{n-1}q(f^jp).
\]
Every summand is nonnegative, hence $q=0$ on every periodic orbit. Density
of periodic points and continuity imply $q\equiv0$. Thus the lower and
upper coboundaries coincide, so
$\psi=v_1-v_1\circ f$. The main reduction then yields the required
coboundary on $N$.
\end{proof}

\section{Invariant Measures, Minimizing Measures and Ergodic Optimization}

\begin{theorem}[Bijectivity of invariant lifts]
\label{thm:lift-bijection}
Under the geometric setting above, the projection
\[
\boxed{\pi_*:\M_F(N)\longrightarrow\M_f(M)}
\]
is a bijection. The continuous section supplies existence of invariant lifts,
while uniform fiber contraction makes each invariant lift unique. In
particular every $f$-invariant probability measure has a unique
$F$-invariant lift; no ergodicity assumption is required.
\end{theorem}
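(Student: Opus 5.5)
The proof splits into existence (surjectivity of $\pi_*$) and uniqueness (injectivity). Both use the fact that, for a continuous test function $g\in C(N)$, the fibers of $F^n$ become small. Set
\[
\omega_g(\delta)=\sup\{|g(\hat x)-g(\hat y)|:d_N(\hat x,\hat y)\le\delta\}.
\]
For $\hat x\in N$ and $n\ge0$, the points $F^n\hat x$ and $F^n r(\hat x)=F^n s(\pi\hat x)$ lie in the same fiber orbit. Uniform fiber contraction therefore gives
\[
\bigl|g(F^n\hat x)-g(F^n s(\pi\hat x))\bigr|\le \omega_g\bigl(\lambda^n\diam(N)\bigr)\longrightarrow0
\]
uniformly in $\hat x$. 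Hence $g\circ F^n$ is uniformly approximated by the function $(g\circ F^n\circ s)\circ\pi$, which factors through $M$.

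\textbf{Uniqueness.} Let $\widetilde\mu_1,\widetilde\mu_2\in\M_F(N)$ with $\pi_*\widetilde\mu_1=\pi_*\widetilde\mu_2=\mu$. For $g\in C(N)$ and $i=1,2$, invariance and the estimate above give
\[
\int g\,d\widetilde\mu_i=\int g\circ F^n\,d\widetilde\mu_i=\int_M g\circ F^n\circ s\,d\mu+O\bigl(\omega_g(\lambda^n\diam N)\bigr).
\]
The main term is the same for $i=1,2$. Letting $n\to\infty$ gives $\int g\,d\widetilde\mu_1=\int g\,d\widetilde\mu_2$, so $\widetilde\mu_1=\widetilde\mu_2$ by the Riesz representation theorem. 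This step uses no ergodicity.

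\textbf{Existence.} Given $\mu\in\M_f(M)$, I would show directly that the limit
\[
L(g):=\lim_{n\to\infty}\int_M g\circ F^n\circ s\,d\mu
\]
exists for every $g\in C(N)$. To prove the sequence is Cauchy, note that $F^{n+k}\circ s$ and $F^n\circ s\circ f^k$ lie in the same fiber, since $\pi F^k s=f^k$. Applying fiber contraction to $F^k s(x)$ and $s(f^kx)$ and iterating $n$ times gives
\[
\Bigl|\int g\circ F^{n+k}\circ s\,d\mu-\int g\circ F^{n}\circ s\circ f^k\,d\mu\Bigr|\le\omega_g(\lambda^n\diam N).
\]
By $f$-invariance of $\mu$, the second integral equals $\int g\circ F^n\circ s\,d\mu$, so the sequence is Cauchy. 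The functional $L$ is positive, linear, and satisfies $L(1)=1$, so it defines a probability measure $\widetilde\mu$. It is $F$-invariant because $L(g\circ F)$ is the same limit shifted by one index. Finally, for $h\in C(M)$ we have $L(h\circ\pi)=\lim\int h\circ f^n\,d\mu=\int h\,d\mu$, so $\pi_*\widetilde\mu=\mu$. Equivalently, $\widetilde\mu$ is the weak-$*$ limit of $(F^n\circ s)_*\mu$.

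The main obstacle I expect is the Cauchy estimate in the existence step, where the section, the semiconjugacy and the invariance of $\mu$ must all be combined correctly. The uniqueness step and the identification of the projection are routine. The only other point to check is that the modulus of continuity $\omega_g$ is finite and tends to $0$, which holds by uniform continuity of $g$ on the compact space $N$.
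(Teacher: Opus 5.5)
Your argument is correct, but it follows a different route from the paper's.

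\textbf{The paper's route.} Existence comes from Krylov--Bogolyubov: a weak-$*$ accumulation point of the Ces\`aro averages $\frac1n\sum_{j<n}F^j_*s_*\mu$ is an invariant lift, because $\pi_*$ is weak-$*$ continuous and $\pi_*F^j_*s_*\mu=\mu$. Uniqueness comes from disintegrating the two lifts over $\mu$ and forming the relatively independent joining $\kappa=\int_M\widetilde\mu_{1,x}\otimes\widetilde\mu_{2,x}\,d\mu(x)$, which is supported on same-fiber pairs. The couplings $(F^n\times F^n)_*\kappa$ then give $W_1(\widetilde\mu_1,\widetilde\mu_2)\le\lambda^n\diam(N)$.

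\textbf{Your route.} You run both halves on one estimate: $g\circ F^n$ is uniformly $\omega_g(\lambda^n\diam N)$-close to $(g\circ F^n\circ s)\circ\pi$. Your Cauchy bound, which you flagged as the main obstacle, holds, since $F^ks(x)$ and $s(f^kx)$ both lie in $N_{f^kx}$.

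\textbf{What your route buys.} It avoids disintegration, joinings and the Wasserstein metric. It also proves more: the non-averaged pushforwards $F^n_*s_*\mu$ converge, and your uniqueness computation shows that every invariant lift equals this explicit limit.

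\textbf{What the paper's route buys.} It separates the hypotheses exactly as the statement's sentence does. Its existence step uses only the section, with no contraction. Its injectivity step uses only fiber contraction, with no section, as the remark after the theorem stresses. In your version, existence needs contraction and uniqueness uses $s$. The latter is inessential: $g\circ F^n$ oscillates by at most $\omega_g(\lambda^n\diam N)$ on each fiber, so you could replace $g\circ F^n\circ s$ by the upper semicontinuous function $x\mapsto\max_{N_x}g\circ F^n$.
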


\begin{proof}
Let $\mu\in\M_f(M)$. The continuous section gives a probability
$\nu_0=s_*\mu$ satisfying $\pi_*\nu_0=\mu$. Put
\[
\nu_n=\frac1n\sum_{j=0}^{n-1}F_*^j\nu_0.
\]
Every weak-$*$ accumulation point $\widetilde\mu$ of $(\nu_n)$ is
$F$-invariant by the Krylov--Bogolyubov argument. Moreover,
\[
\pi_*F_*^j\nu_0=f_*^j\pi_*\nu_0=f_*^j\mu=\mu,
\]
so $\pi_*\widetilde\mu=\mu$. This proves surjectivity.

For injectivity, suppose
$\pi_*\widetilde\mu_1=\pi_*\widetilde\mu_2=\mu$. Disintegrate over the
common factor:
\[
\widetilde\mu_i=\int_M\widetilde\mu_{i,x}\,d\mu(x),
\qquad \widetilde\mu_{i,x}(N_x)=1
\quad\text{for $\mu$-a.e. }x.
\]
Use the relatively independent joining
\begin{equation}
\kappa=\int_M
(\widetilde\mu_{1,x}\otimes\widetilde\mu_{2,x})\,d\mu(x).
\label{eq:relative-product}
\end{equation}
This is a coupling of $\widetilde\mu_1$ and $\widetilde\mu_2$ supported on
pairs in the same fiber. Since the marginals are invariant,
$(F^n\times F^n)_*\kappa$ is again a coupling of the same two measures.
Therefore
\begin{align*}
W_1(\widetilde\mu_1,\widetilde\mu_2)
&\le \int d_N(F^n\hat x,F^n\hat y)\,d\kappa(\hat x,\hat y)\\
&\le \lambda^n\int d_N(\hat x,\hat y)\,d\kappa\\
&\le \lambda^n\diam(N).
\end{align*}
Letting $n\to\infty$ gives $W_1(\widetilde\mu_1,\widetilde\mu_2)=0$,
so the two measures coincide.
\end{proof}

\begin{remark}
The contraction argument proving uniqueness of an invariant lift uses only the
fiber geometry. The continuous global section is used here as a convenient
device to construct a lift and is essential elsewhere for the explicit
continuous cohomological reduction. Thus the topological section hypothesis
should be regarded primarily as a hypothesis of the reduction mechanism,
rather than of the contraction-based uniqueness argument itself.
\end{remark}

\begin{corollary}[Invariant averages]
\label{cor:average-correspondence}
If $\varphi=\psi\circ\pi+u-u\circ F$ and
$\widetilde\mu\leftrightarrow\mu$ under the bijection above, then
\begin{equation}
\int_N\varphi\,d\widetilde\mu=\int_M\psi\,d\mu.
\label{eq:average-correspondence-v4}
\end{equation}
Consequently $m(\varphi,F)=m(\psi,f)$.
\end{corollary}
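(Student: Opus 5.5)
The plan is to integrate the cohomological identity $\varphi=\psi\circ\pi+u-u\circ F$ term by term against $\widetilde\mu$. All three summands are continuous on the compact space $N$: $\varphi$ is H\"older, $u$ is continuous by Theorem~\ref{thm:reduction}, and $\psi\circ\pi$ is a composition of continuous maps. Hence all three are bounded and $\widetilde\mu$-integrable, so the integral splits as
\[
\int_N\varphi\,d\widetilde\mu=\int_N\psi\circ\pi\,d\widetilde\mu+\int_N u\,d\widetilde\mu-\int_N u\circ F\,d\widetilde\mu .
\]

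There are three steps.

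\textbf{Coboundary term.} Since $F_*\widetilde\mu=\widetilde\mu$, we have $\int u\circ F\,d\widetilde\mu=\int u\,d(F_*\widetilde\mu)=\int u\,d\widetilde\mu$. So the last two terms cancel exactly.

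\textbf{Change of variables.} Under the bijection of Theorem~\ref{thm:lift-bijection}, correspondence means $\pi_*\widetilde\mu=\mu$. Therefore $\int_N\psi\circ\pi\,d\widetilde\mu=\int_M\psi\,d(\pi_*\widetilde\mu)=\int_M\psi\,d\mu$, which gives \eqref{eq:average-correspondence-v4}.

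\textbf{Equality of minima.} This uses only that $\pi_*$ is a bijection. For every $\widetilde\mu\in\M_F(N)$, the measure $\pi_*\widetilde\mu$ lies in $\M_f(M)$ and has the same average, so $m(\psi,f)\le\int\varphi\,d\widetilde\mu$. Minimizing over $\widetilde\mu$ gives $m(\psi,f)\le m(\varphi,F)$. Conversely, surjectivity of $\pi_*$ lets us lift any $\mu\in\M_f(M)$, in particular a minimizer for $\psi$, to some $\widetilde\mu$ with the same average, which gives $m(\varphi,F)\le m(\psi,f)$. Both minima exist, as noted after \eqref{eq:m-base}.

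I do not expect a serious obstacle. The only points to watch are the integrability and boundedness of $u$, so that the cancellation is legitimate rather than a formal $\infty-\infty$, and the surjectivity direction in the last step. Both are supplied by Theorem~\ref{thm:reduction} and Theorem~\ref{thm:lift-bijection}. Note also that the identity holds for any decomposition with continuous $u$, not only the canonical one, so it is compatible with the gauge freedom in Remark~\ref{rem:gauge-freedom}.
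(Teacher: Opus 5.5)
Your proposal is correct and follows the paper's proof: the coboundary integrates to zero by $F$-invariance of $\widetilde\mu$, $\int\psi\circ\pi\,d\widetilde\mu=\int\psi\,d\mu$ because $\pi_*\widetilde\mu=\mu$, and the minima agree through the correspondence of invariant measures. Your more detailed last step also shows that only the facts that $\pi_*$ maps $\M_F(N)$ into and onto $\M_f(M)$ are used there, not injectivity.
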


\begin{proof}
Invariance gives
$\int(u-u\circ F)\,d\widetilde\mu=0$, while
$\int\psi\circ\pi\,d\widetilde\mu=\int\psi\,d\mu$. The equality of
minimum values follows because $\pi_*$ is bijective on invariant measures.
\end{proof}

Let
\[
\Lambda(\varphi,F)=\left\{\widetilde\mu\in\M_F(N):
\int\varphi\,d\widetilde\mu=m(\varphi,F)\right\},
\]
and define $\Lambda(\psi,f)$ analogously.

\begin{theorem}[Reduction of ergodic optimization]
\label{thm:ergodic-optimization}
The projection restricts to a bijection
\[
\boxed{\pi_*:\Lambda(\varphi,F)\longrightarrow\Lambda(\psi,f).}
\]
Thus uniqueness or non-uniqueness of minimizing measures, and ergodicity of
the corresponding minimizing measure, are preserved by the reduction.
\end{theorem}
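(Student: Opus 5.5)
The plan is to combine the bijection of Theorem~\ref{thm:lift-bijection} with the integral identity of Corollary~\ref{cor:average-correspondence}. First I check that $\pi_*$ maps $\Lambda(\varphi,F)$ into $\Lambda(\psi,f)$. If $\widetilde\mu\in\Lambda(\varphi,F)$ and $\mu=\pi_*\widetilde\mu$, then $\mu\in\M_f(M)$. Corollary~\ref{cor:average-correspondence} gives
\[
\int\psi\,d\mu=\int\varphi\,d\widetilde\mu=m(\varphi,F)=m(\psi,f),
\]
so $\mu\in\Lambda(\psi,f)$. Injectivity of this restricted map is automatic, since $\pi_*$ is already injective on all of $\M_F(N)$.

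For surjectivity, let $\mu\in\Lambda(\psi,f)$ and take its unique invariant lift $\widetilde\mu$ from Theorem~\ref{thm:lift-bijection}. The same corollary gives $\int\varphi\,d\widetilde\mu=\int\psi\,d\mu=m(\psi,f)=m(\varphi,F)$, so $\widetilde\mu\in\Lambda(\varphi,F)$. This gives the boxed bijection. Since it is a bijection, $\Lambda(\varphi,F)$ is a singleton exactly when $\Lambda(\psi,f)$ is, which settles the statement about uniqueness and non-uniqueness.

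The only step needing real care is the preservation of ergodicity, which I would prove in both directions. If $\widetilde\mu$ is ergodic, then its factor $\mu=\pi_*\widetilde\mu$ is ergodic: an $f$-invariant set $A$ pulls back to the $F$-invariant set $\pi^{-1}A$, and $\mu(A)=\widetilde\mu(\pi^{-1}A)\in\{0,1\}$.

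For the converse I would avoid invariant sets and use extremality, since ergodic measures are exactly the extreme points of the invariant-measure simplex. The map $\pi_*$ is affine. Its inverse is also affine: the lift of $t\mu_1+(1-t)\mu_2$ must equal $t\widetilde\mu_1+(1-t)\widetilde\mu_2$, because the latter is an invariant measure projecting to it and lifts are unique. So $\pi_*$ is an affine bijection between $\M_F(N)$ and $\M_f(M)$, and it therefore maps extreme points to extreme points in both directions. Finally, $\Lambda(\varphi,F)$ and $\Lambda(\psi,f)$ are faces of the respective simplices, because they are minimizing sets of linear functionals. Hence the ergodic minimizing measures, and likewise the extreme points of the minimizing sets, correspond exactly under $\pi_*$.
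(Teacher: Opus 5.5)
Your proposal is correct and follows the paper's proof essentially verbatim. Corresponding invariant measures have equal integrals and the minimum values agree, so the restriction of the bijection $\pi_*:\M_F(N)\to\M_f(M)$ maps minimizers onto minimizers. Ergodicity is preserved because $\pi_*$ is an affine bijection (its inverse is affine by uniqueness of lifts), and affine bijections carry extreme points to extreme points. Your separate invariant-set argument for the factor direction and your remark that the minimizing sets are faces are correct but redundant additions.
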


\begin{proof}
For corresponding invariant measures the objective functions agree by
\eqref{eq:average-correspondence-v4}, and their minimum values agree by the
preceding corollary. Hence one measure is minimizing if and only if its
projection/lift is minimizing. Bijectivity follows from
Theorem~\ref{thm:lift-bijection}. It remains only to justify the statement
about ergodicity. The map $\pi_*$ is affine. Its inverse is affine as well:
if $\mu=t\mu_1+(1-t)\mu_2$ and $\widetilde\mu_i$ is the unique invariant
lift of $\mu_i$, then $t\widetilde\mu_1+(1-t)\widetilde\mu_2$ is an invariant
lift of $\mu$, hence equals the unique lift of $\mu$. Thus $\pi_*$ is an
affine bijection between the compact convex sets $\M_F(N)$ and $\M_f(M)$.
Affine bijections preserve extreme points, and the extreme points of the
invariant-measure simplex are precisely the ergodic measures. Hence
ergodicity is preserved in both directions.
\end{proof}

\section{Relative Entropy, Pressure and Equilibrium States}\label{sec:pressure}

We first fix the pressure convention used below. For a continuous map
$T:X\to X$ of a compact metric space and a continuous potential $\phi$, set
\begin{equation}
P_T(\phi)
:=\sup_{\mu\in\M_T}
\left(h_\mu(T)+\int\phi\,d\mu\right).
\label{eq:variational-pressure-definition}
\end{equation}
By the variational principle for continuous maps of compact metric spaces,
this quantity agrees with the usual topological pressure defined via
separated (equivalently, spanning) sets; see, for instance, Bowen, Ruelle and Walters
\cite{Bowen,Ruelle,Walters}. Thus no distinction between variational and topological
pressure will be needed below.
When equilibrium states are discussed, we work in the standard finite-pressure
regime; this includes all concrete applications considered below.

For a compact set $K\subset N$, let $s_n(\eps,K)$ denote the maximal
cardinality of an $(n,\eps)$-separated subset of $K$ for the Bowen metric
$d_n^F$. We use the fiberwise relative topological entropy
\begin{equation}
h_{\mathrm{top}}(F\mid\pi)
:=\sup_{x\in M} h_{\mathrm{top}}(F,N_x),
\qquad N_x:=\pi^{-1}(x),
\label{eq:relative-topological-entropy-definition}
\end{equation}
where
\[
h_{\mathrm{top}}(F,K)
:=\lim_{\eps\downarrow0}\limsup_{n\to\infty}
\frac1n\log s_n(\eps,K).
\]
This is the fiberwise entropy quantity entering the version of the
Ledrappier--Walters relative variational inequality used below. We use this
explicit fiberwise formulation throughout; no stronger relative-entropy
notion is required for the pressure argument.

\begin{lemma}[Zero relative topological entropy]
\label{lem:relative-zero}
Under uniform fiber contraction,
\[
\boxed{h_{\mathrm{top}}(F\mid\pi)=0.}
\]
More precisely, for every $\eps>0$, every $x\in M$, and every $n\ge1$,
the maximal cardinality $s_n(\eps,N_x)$ of an $(n,\eps)$-separated subset
of $N_x$ satisfies
\begin{equation}
 s_n(\eps,N_x)\le s_1(\eps,N),
 \label{eq:relative-separated-bound}
\end{equation}
where the right-hand side is finite and independent of $x$ and $n$.
\end{lemma}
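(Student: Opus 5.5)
The key observation is that, inside a single fiber, the Bowen metric collapses to the ordinary metric. For $\hat x,\hat y\in N_x$ the orbits $F^j\hat x,F^j\hat y$ stay in common fibers $N_{f^j(x)}$ by \eqref{eq:fiber-map}. Uniform fiber contraction therefore gives
\[
d_N(F^j\hat x,F^j\hat y)\le\lambda^j d_N(\hat x,\hat y)\le d_N(\hat x,\hat y),\qquad 0\le j\le n-1,
\]
so that
\[
d_n^F(\hat x,\hat y)=\max_{0\le j<n}d_N(F^j\hat x,F^j\hat y)=d_N(\hat x,\hat y).
\]
Consequently an $(n,\eps)$-separated subset $E\subset N_x$ is exactly an $\eps$-separated subset of $N_x$ for $d_N$. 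Since $N_x\subset N$, it is in particular an $(1,\eps)$-separated subset of $N$, because $d_1^F=d_N$. This yields $s_n(\eps,N_x)\le s_1(\eps,N)$, which is \eqref{eq:relative-separated-bound}.

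Next I would check that $s_1(\eps,N)$ is finite. This follows from compactness of $N$. Cover $N$ by finitely many open balls of radius $\eps/2$. Two points of an $\eps$-separated set cannot lie in the same ball, so the cardinality of any such set is bounded by the number of balls. The bound is independent of $x$ and $n$ by construction.

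Finally, the entropy statement follows directly. For fixed $\eps$,
\[
\limsup_{n\to\infty}\frac1n\log s_n(\eps,N_x)\le\limsup_{n\to\infty}\frac1n\log s_1(\eps,N)=0.
\]
Letting $\eps\downarrow0$ gives $h_{\mathrm{top}}(F,N_x)=0$ for each $x$, uniformly in $x$. Taking the supremum over $x\in M$ gives $h_{\mathrm{top}}(F\mid\pi)=0$; nonnegativity is clear since $s_n\ge1$ for nonempty fibers.

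There is no real obstacle here. The only point requiring care is the first step: the contraction hypothesis must be applied to pairs in the same fiber at every time $j$. This is guaranteed because the hypothesis is stated for $F^n$ applied to points in a common fiber, so no fiber-invariance issue arises beyond \eqref{eq:fiber-map}.
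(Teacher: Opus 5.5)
Your proposal is correct and follows essentially the same route as the paper: contraction makes the Bowen metric on a fiber equal to $d_N$ (the $j=0$ term dominates), so $(n,\eps)$-separated subsets of $N_x$ are $\eps$-separated subsets of $N$, giving $s_n(\eps,N_x)\le s_1(\eps,N)<\infty$ and hence zero exponential growth uniformly in $x$. Your explicit finite-cover argument for $s_1(\eps,N)<\infty$ and the remark that surjectivity of $\pi$ makes fibers nonempty (so the entropy is nonnegative) are harmless additions that the paper leaves implicit.
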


\begin{proof}
For $\hat x,\hat y\in N_x$ and $j\ge0$,
\[
d_N(F^j\hat x,F^j\hat y)\le\lambda^j d_N(\hat x,\hat y)
\le d_N(\hat x,\hat y).
\]
Hence the Bowen metric on a fiber is exactly
\[
d_n^F(\hat x,\hat y)
:=\max_{0\le j<n}d_N(F^j\hat x,F^j\hat y)
=d_N(\hat x,\hat y),
\]
because the term $j=0$ occurs in the maximum. Thus every
$(n,\eps)$-separated subset of $N_x$ is already $\eps$-separated for the
original metric. Since $N$ is compact, $s_1(\eps,N)<\infty$, proving
\eqref{eq:relative-separated-bound}. Therefore, uniformly in $x$,
\[
\limsup_{n\to\infty}\frac1n\log s_n(\eps,N_x)=0.
\]
Letting $\eps\downarrow0$ and then taking the supremum over $x$ proves
\eqref{eq:relative-topological-entropy-definition} is zero.
\end{proof}

\begin{lemma}[Relative entropy inequality]
\label{lem:LW-relative}
In the compact semiconjugacy
\[
\pi\circ F=f\circ\pi,
\]
fix $\mu\in\M_f(M)$. The Ledrappier--Walters relative variational principle
gives
\begin{equation}
\sup_{\substack{\widetilde\nu\in\M_F(N)\\ \pi_*\widetilde\nu=\mu}}
 h_{\widetilde\nu}(F)
=
h_\mu(f)+\int_M h_{\mathrm{top}}(F,N_x)\,d\mu(x).
\label{eq:LW-exact-relative}
\end{equation}
Consequently, for every invariant lift $\widetilde\mu$ of $\mu$,
\[
h_\mu(f)\le h_{\widetilde\mu}(F)
\le h_\mu(f)+h_{\mathrm{top}}(F\mid\pi).
\]
Under Lemma~\ref{lem:relative-zero}, every invariant lift therefore satisfies
\[
h_{\widetilde\mu}(F)=h_\mu(f).
\]
\end{lemma}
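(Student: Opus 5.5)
The plan is to take the Ledrappier--Walters relative variational principle \eqref{eq:LW-exact-relative} as a cited black box for the compact semiconjugacy $\pi\circ F=f\circ\pi$. Both systems are continuous maps of compact metric spaces and $\pi$ is a continuous surjection, so the hypotheses of that theorem hold. The remaining work consists of three short deductions: a lower bound from the factor map, an upper bound from the relative principle, and the collapse of the two bounds under Lemma~\ref{lem:relative-zero}.

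\textbf{Lower bound.} Let $\widetilde\mu\in\M_F(N)$ with $\pi_*\widetilde\mu=\mu$. Then $(M,f,\mu)$ is a measure-theoretic factor of $(N,F,\widetilde\mu)$ via $\pi$. By monotonicity of metric entropy under factors,
\[
h_\mu(f)\le h_{\widetilde\mu}(F).
\]
The standard argument is that for every finite partition $\xi$ of $M$, the pulled-back partition $\pi^{-1}\xi$ satisfies $h_{\widetilde\mu}(F,\pi^{-1}\xi)=h_\mu(f,\xi)$; taking the supremum over $\xi$ gives the inequality.

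\textbf{Upper bound.} Any particular lift $\widetilde\mu$ is bounded by the supremum on the left of \eqref{eq:LW-exact-relative}. Hence
\[
h_{\widetilde\mu}(F)\le h_\mu(f)+\int_M h_{\mathrm{top}}(F,N_x)\,d\mu(x).
\]
The integrand is bounded pointwise by $h_{\mathrm{top}}(F\mid\pi)=\sup_x h_{\mathrm{top}}(F,N_x)$, and $\mu$ is a probability measure, so the integral is at most $h_{\mathrm{top}}(F\mid\pi)$. This gives the two-sided estimate in the statement. In the degenerate case $h_\mu(f)=\infty$ the lower bound already forces both sides to be infinite. Measurability of $x\mapsto h_{\mathrm{top}}(F,N_x)$ is part of the Ledrappier--Walters statement, and in any case it is irrelevant once the integrand is shown to vanish identically.

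\textbf{Conclusion.} Lemma~\ref{lem:relative-zero} gives $h_{\mathrm{top}}(F,N_x)=0$ for every $x$. So the upper and lower bounds coincide and $h_{\widetilde\mu}(F)=h_\mu(f)$.

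The main obstacle is to make sure that the fiberwise quantity $h_{\mathrm{top}}(F,N_x)$, defined here via $(n,\eps)$-separated subsets of $N_x$, is the one appearing in the cited form of the relative variational principle. Ledrappier--Walters use Bowen's entropy of the compact set $\pi^{-1}(x)$, computed with spanning or separated sets for $F$. These coincide with the definition above because for compact subsets the $\limsup$ over $n$ of separated-set counts agrees with Bowen's definition.

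In fact only the inequality ``$\le$'' in \eqref{eq:LW-exact-relative} is needed, together with the vanishing of the fiber entropies. If there were any doubt about the equality form, I would instead quote Bowen's inequality
\[
h_{\widetilde\mu}(F)\le h_\mu(f)+\sup_x h_{\mathrm{top}}(F,\pi^{-1}x),
\]
which suffices for the conclusion.
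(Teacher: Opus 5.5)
Your proposal is correct and follows the paper's proof essentially step for step: Ledrappier--Walters is cited as a black box, the lower bound comes from monotonicity of entropy under factors, the upper bound uses $\int_M h_{\mathrm{top}}(F,N_x)\,d\mu(x)\le h_{\mathrm{top}}(F\mid\pi)$, and the two bounds collapse via Lemma~\ref{lem:relative-zero}. Your extra details (the pulled-back partition argument, the case $h_\mu(f)=\infty$, and matching the fiberwise entropy with the one in Ledrappier--Walters) are sound; one small quibble is that the measure-theoretic fallback you attribute to Bowen is not an independent route, since Bowen's classical inequality is the topological one and the measure version is essentially the ``$\le$'' half of Ledrappier--Walters itself.
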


\begin{proof}
Equation \eqref{eq:LW-exact-relative} is the relative variational principle of
Ledrappier--Walters~\cite{LedrappierWalters}. The lower bound
$h_\mu(f)\le h_{\widetilde\mu}(F)$ is the monotonicity of metric entropy under
factors. For the upper bound, every particular lift is bounded by the
supremum in \eqref{eq:LW-exact-relative}, and
\[
\int_M h_{\mathrm{top}}(F,N_x)\,d\mu(x)
\le h_{\mathrm{top}}(F\mid\pi).
\]
Lemma~\ref{lem:relative-zero} then gives equality of the two metric entropies.
\end{proof}

\begin{theorem}[Entropy and pressure reduction]
\label{thm:pressure}
Let $\widetilde\mu\leftrightarrow\mu$ be corresponding invariant measures.
Then
\begin{equation}
\boxed{h_{\widetilde\mu}(F)=h_\mu(f).}
\label{eq:entropy-equality}
\end{equation}
If $\varphi=\psi\circ\pi+u-u\circ F$, then
\begin{equation}
\boxed{P_F(\varphi)=P_f(\psi).}
\label{eq:pressure-equality}
\end{equation}
Moreover $\widetilde\mu$ is an equilibrium state for $(F,\varphi)$ if and
only if $\mu$ is an equilibrium state for $(f,\psi)$.
\end{theorem}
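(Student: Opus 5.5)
The plan is to assemble the statement from three earlier results: the invariant-measure bijection of Theorem~\ref{thm:lift-bijection}, the entropy equality of Lemma~\ref{lem:LW-relative}, and the average correspondence of Corollary~\ref{cor:average-correspondence}. I will not estimate separated sets directly. The whole argument runs through the variational definition \eqref{eq:variational-pressure-definition}.

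\textbf{Step 1 (entropy).} Let $\widetilde\mu\leftrightarrow\mu$. By Theorem~\ref{thm:lift-bijection}, $\widetilde\mu$ is the unique invariant lift of $\mu$, so in particular $\pi_*\widetilde\mu=\mu$. Lemma~\ref{lem:relative-zero} gives $h_{\mathrm{top}}(F\mid\pi)=0$. Lemma~\ref{lem:LW-relative} then gives $h_\mu(f)\le h_{\widetilde\mu}(F)\le h_\mu(f)+0$, which is \eqref{eq:entropy-equality}.

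\textbf{Step 2 (pressure).} For corresponding measures, Corollary~\ref{cor:average-correspondence} gives $\int\varphi\,d\widetilde\mu=\int\psi\,d\mu$. Combined with Step 1, the free-energy functionals agree:
\[
h_{\widetilde\mu}(F)+\int\varphi\,d\widetilde\mu=h_\mu(f)+\int\psi\,d\mu .
\]
Since $\pi_*:\M_F(N)\to\M_f(M)$ is a bijection, the two suprema in \eqref{eq:variational-pressure-definition} range over the same set of values. Hence $P_F(\varphi)=P_f(\psi)$. This holds even if both sides are $+\infty$, although in the compact setting $h_{\mathrm{top}}$ may be infinite only in degenerate cases and the equality is an equality in $(-\infty,+\infty]$ either way.

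\textbf{Step 3 (equilibrium states).} An equilibrium state is a measure attaining the supremum. Suppose $\widetilde\mu$ attains it for $(F,\varphi)$. Its free energy equals that of $\mu=\pi_*\widetilde\mu$, and the two suprema are equal, so $\mu$ attains the supremum for $(f,\psi)$. For the converse, given an equilibrium state $\mu$ for $(f,\psi)$, let $\widetilde\mu$ be its unique invariant lift; the same identity shows $\widetilde\mu$ is an equilibrium state for $(F,\varphi)$. Restricting the bijection of Theorem~\ref{thm:lift-bijection} therefore yields a bijection of equilibrium states. The statement is read in the finite-pressure regime fixed in the text.

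\textbf{Main obstacle.} The only nontrivial input is Step 1, specifically the upper bound $h_{\widetilde\mu}(F)\le h_\mu(f)$. It depends on applying the Ledrappier--Walters principle with the fiberwise quantity $h_{\mathrm{top}}(F,N_x)$ defined via separated sets. I will rely on Lemma~\ref{lem:LW-relative} as stated.

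If one prefers to avoid that principle, an alternative is an Abramov--Rokhlin argument. One would show that the conditional entropy of $\widetilde\mu$ relative to $\pi^{-1}\mathcal B_M$ vanishes, using fiber contraction: a finite partition with small-diameter elements generates, relative to the base, only boundedly many cells per fiber. I would attempt this route only if the cited principle's hypotheses (compactness and continuity of $F$ and $f$) needed separate justification, but they hold here.
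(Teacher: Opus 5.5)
Your proof is correct and is essentially the paper's own argument: Lemma~\ref{lem:LW-relative} together with Lemma~\ref{lem:relative-zero} gives the entropy equality, Corollary~\ref{cor:average-correspondence} makes the free-energy functionals agree for corresponding measures, and the bijection of Theorem~\ref{thm:lift-bijection} then identifies the suprema and the equilibrium states. Your side remarks are unnecessary and slightly inaccurate as stated. Infinite topological entropy occurs non-degenerately, e.g.\ for the shift homeomorphism of $[0,1]^{\mathbb Z}$. Also, an arbitrary small-diameter partition need not have boundedly many cells of its $n$-th dynamical refinement meeting each fiber. Neither remark plays any role in the proof.
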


\begin{proof}
Lemma~\ref{lem:LW-relative}, together with
Lemma~\ref{lem:relative-zero}, gives
\eqref{eq:entropy-equality}. By Corollary~\ref{cor:average-correspondence},
\[
\int_N\varphi\,d\widetilde\mu=\int_M\psi\,d\mu.
\]
Hence the free-energy functionals agree for every corresponding pair:
\begin{equation}
h_{\widetilde\mu}(F)+\int_N\varphi\,d\widetilde\mu
=
h_\mu(f)+\int_M\psi\,d\mu.
\label{eq:free-energy-correspondence}
\end{equation}
Since Theorem~\ref{thm:lift-bijection} gives a bijection
$\pi_*:\M_F(N)\to\M_f(M)$, taking suprema in
\eqref{eq:free-energy-correspondence} yields
\[
P_F(\varphi)=P_f(\psi).
\]
The same pointwise identity of free-energy functionals, together with the
bijection of invariant measures, proves the equilibrium-state
correspondence.
\end{proof}

\section{Weak Gibbs Transfer under Quantitative Bowen-Mass Comparison}\label{sec:weak-gibbs}

For $T:X\to X$ write
\[
B_T(x,\eps,n)=\{y:d(T^jy,T^jx)<\eps,\ 0\le j<n\}.
\]
Throughout this section we assume that the pressures appearing in the weak
Gibbs expressions are finite. Let $G_T\subset X$ be a prescribed Gibbs set.
We use the uniform-on-$G_T$ convention for weak Gibbs estimates: the
subexponential constants below depend on the radius and on $n$, but not on
the center $x\in G_T$. We say that a probability measure $\nu$ is
\emph{uniform weak Gibbs for $(T,\phi)$ on $G_T$} if for every
sufficiently small $\eps>0$ there are numbers $K_n(\eps)\ge1$ such that
\begin{equation}
\lim_{n\to\infty}\frac1n\log K_n(\eps)=0
\label{eq:weak-gibbs-subexp}
\end{equation}
and, for every $x\in G_T$ and every $n\ge1$,
\begin{equation}
K_n(\eps)^{-1}
\le
\frac{\nu(B_T(x,\eps,n))}
{\exp(S_n\phi(x)-nP_T(\phi))}
\le K_n(\eps).
\label{eq:weak-gibbs-def}
\end{equation}
We abbreviate this uniform-on-$G_T$ property simply as ``weak Gibbs'' in the
statements below. This convention is stronger than pointwise variants in
which the subexponential error is allowed to depend on the center.

Zero relative entropy alone does not compare the measures of individual
Bowen balls. The appropriate additional input is a subexponential
comparison of Bowen-ball masses for a \emph{specific corresponding pair}
of invariant measures. The condition below should be read as a sufficient
transfer hypothesis. We do not claim that it is necessary, nor that it follows
from uniform fiber contraction. The latter controls the diameter of the
iterated image of each individual fiber, but it gives no quantitative lower
bound on the conditional mass assigned to a small fiber neighborhood. This
is precisely the additional information needed to compare the masses of
Bowen balls in the extension and in the quotient.

\begin{definition}[Forward subexponential Bowen-mass comparability]
\label{def:forward-bowen-comparison}
Let $\widetilde\mu\in\M_F(N)$ and $\mu=\pi_*\widetilde\mu$. Let
$G_F\subset N$ and $G_f\subset M$ satisfy $\pi(G_F)\subset G_f$. We say
that $(\widetilde\mu,\mu)$ satisfies \emph{forward subexponential
Bowen-mass comparability on $(G_F,G_f)$} if, for every sufficiently small
$\eps>0$, there exist radii
$\eps_-(\eps),\eps_+(\eps)>0$ tending to zero with $\eps$ and numbers
$a_n^{+}(\eps)\ge0$ such that
\[
\lim_{n\to\infty}\frac{a_n^{+}(\eps)}n=0
\]
and, for every $\hat x\in G_F$, $x=\pi(\hat x)$, and every $n\ge1$,
\begin{equation}
e^{-a_n^{+}(\eps)}\mu(B_f(x,\eps_-,n))
\le \widetilde\mu(B_F(\hat x,\eps,n))
\le e^{a_n^{+}(\eps)}\mu(B_f(x,\eps_+,n)).
\label{eq:bowen-comp-forward}
\end{equation}
\end{definition}

\begin{definition}[Backward subexponential Bowen-mass comparability]
\label{def:backward-bowen-comparison}
Let $\widetilde\mu\in\M_F(N)$ and $\mu=\pi_*\widetilde\mu$. Let
$G_F\subset N$ and $G_f\subset M$ be such that every $x\in G_f$ admits
at least one lift $\hat x\in G_F$ with $\pi(\hat x)=x$. We say that
$(\widetilde\mu,\mu)$ satisfies \emph{backward subexponential Bowen-mass
comparability on $(G_F,G_f)$} if, for every sufficiently small $\eps>0$,
there exist radii $\eps_-(\eps),\eps_+(\eps)>0$ tending to zero with
$\eps$ and numbers $a_n^{-}(\eps)\ge0$ such that
\[
\lim_{n\to\infty}\frac{a_n^{-}(\eps)}n=0
\]
and, for every $x\in G_f$, for a prescribed lift $\hat x\in G_F$ with
$\pi(\hat x)=x$, and every $n\ge1$,
\begin{equation}
e^{-a_n^{-}(\eps)}\widetilde\mu(B_F(\hat x,\eps_-,n))
\le \mu(B_f(x,\eps,n))
\le e^{a_n^{-}(\eps)}\widetilde\mu(B_F(\hat x,\eps_+,n)).
\label{eq:bowen-comp-backward}
\end{equation}
\end{definition}

The logarithmic formulation above makes explicit that only an $o(n)$ error
in Bowen-ball masses is required. Equivalently one may write
$A_n^{\pm}(\eps)=e^{a_n^{\pm}(\eps)}$ with
$n^{-1}\log A_n^{\pm}(\eps)\to0$.

\begin{lemma}[Stability under radius reparametrization]
\label{lem:radius-reparametrization}
Let $\nu$ be weak Gibbs for $(T,\phi)$ in the sense of
\eqref{eq:weak-gibbs-def}. Suppose that, for every sufficiently small
$\eps>0$, two radii $\delta_-(\eps),\delta_+(\eps)>0$ satisfy
\[
\delta_\pm(\eps)\longrightarrow0\qquad(\eps\to0).
\]
Then the weak Gibbs estimates may be evaluated at
$\delta_-(\eps)$ and $\delta_+(\eps)$ without changing the subexponential
character of the Gibbs constants. More precisely, if
$K_n(\delta_\pm(\eps))$ are the corresponding Gibbs constants, then
\[
\frac1n\log
\max\{K_n(\delta_-(\eps)),K_n(\delta_+(\eps))\}\longrightarrow0.
\]
The same conclusion remains valid after finitely many further radius
reparametrizations, provided all resulting radii tend to zero with the
target radius.
\end{lemma}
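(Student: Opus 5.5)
The plan is to observe that the lemma is essentially a restatement of the definition of weak Gibbs. The only genuine content is that the subexponential limit holds for each fixed admissible radius. Once that is clear, the reparametrization becomes a matter of bookkeeping.

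Fix $\eps>0$ small enough that $\delta_-(\eps)$ and $\delta_+(\eps)$ both lie in the range of radii for which \eqref{eq:weak-gibbs-def} is available. This is possible because the definition applies to every sufficiently small radius, say all radii below some $\eps_0$, and $\delta_\pm(\eps)\to0$ as $\eps\to0$. So there is $\eps_1>0$ such that $\delta_\pm(\eps)<\eps_0$ whenever $\eps<\eps_1$. For such $\eps$, the weak Gibbs property evaluated at the radius $\delta=\delta_-(\eps)$ gives constants $K_n(\delta_-(\eps))\ge1$ with $n^{-1}\log K_n(\delta_-(\eps))\to0$ by \eqref{eq:weak-gibbs-subexp}. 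The same holds for $\delta_+(\eps)$.

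The maximum of two sequences of nonnegative logarithms satisfies
\[
0\le\frac1n\log\max\{K_n(\delta_-),K_n(\delta_+)\}\le\frac1n\log K_n(\delta_-)+\frac1n\log K_n(\delta_+),
\]
and the right-hand side tends to zero. The two-sided Gibbs estimate \eqref{eq:weak-gibbs-def} then holds at both radii with the common constant $\max\{K_n(\delta_-),K_n(\delta_+)\}$, uniformly in $x\in G_T$. This is because each individual constant is already uniform in the center, and enlarging $K_n$ only weakens both inequalities.

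For the statement about finitely many further reparametrizations, I would argue by induction on their number $k$. A composition $\delta^{(k)}(\eps)=\delta_k(\delta_{k-1}(\cdots\delta_1(\eps)))$ of maps each tending to zero with their argument again tends to zero with $\eps$. The resulting finite family of radii therefore all eventually lies below $\eps_0$. The maximum of finitely many subexponential sequences is subexponential, by the same sum bound.

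The only point needing care, and the one I expect to be the main obstacle, is logical rather than computational. The limit in \eqref{eq:weak-gibbs-subexp} is taken with the radius fixed, so for each $\eps$ the radii $\delta_\pm(\eps)$ must be fixed numbers in the admissible range. The conclusion is a statement for each fixed $\eps$. No uniformity in $\eps$ is claimed, and none is available from the definition.
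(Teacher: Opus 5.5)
Your proposal is correct and follows essentially the same argument as the paper: for each fixed small $\eps$, the radii $\delta_\pm(\eps)$ are fixed admissible radii, so the definition gives $o(n)$ logarithmic Gibbs constants at each, and the maximum of finitely many such sequences is again $o(n)$. Your explicit choice of $\eps_1$, which guarantees that $\delta_\pm(\eps)$ lie in the range where the weak Gibbs definition applies, makes precise a point the paper leaves implicit.
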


\begin{proof}
For each fixed sufficiently small target radius $\eps$, the radii
$\delta_\pm(\eps)$ are themselves fixed positive radii. Hence the weak Gibbs
definition applies to each of them, and both corresponding logarithmic
constants are $o(n)$. The maximum of finitely many $o(n)$ sequences is again
$o(n)$.
\end{proof}

\begin{theorem}[Forward weak Gibbs transfer]
\label{thm:weak-gibbs-forward}
Assume
\[
\varphi=\psi\circ\pi+u-u\circ F,
\]
and let $\widetilde\mu\leftrightarrow\mu$ be corresponding invariant
measures. Let $G_F\subset N$ and $G_f\subset M$ satisfy
$\pi(G_F)\subset G_f$. If $(\widetilde\mu,\mu)$ satisfies forward
subexponential Bowen-mass comparability on $(G_F,G_f)$ and $\mu$ is weak
Gibbs for $(f,\psi)$ on $G_f$, then $\widetilde\mu$ is weak Gibbs for
$(F,\varphi)$ on $G_F$.
\end{theorem}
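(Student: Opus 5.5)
The argument chains three estimates for a fixed small $\eps>0$, $\hat x\in G_F$ with $x=\pi(\hat x)\in G_f$, and $n\ge1$. These are the forward Bowen-mass comparison \eqref{eq:bowen-comp-forward}, the weak Gibbs bounds \eqref{eq:weak-gibbs-def} for $\mu$ evaluated at the radii $\eps_\pm(\eps)$, and the cohomological identity. The identity converts $S_n\psi(x)$ into $S_n\varphi(\hat x)$ up to a bounded error, and Theorem~\ref{thm:pressure} gives $P_F(\varphi)=P_f(\psi)$.

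First, telescoping $\varphi=\psi\circ\pi+u-u\circ F$ along the orbit of $\hat x$ and using $\pi\circ F^j=f^j\circ\pi$ gives
\[
S_n\varphi(\hat x)=S_n\psi(x)+u(\hat x)-u(F^n\hat x),
\]
so $|S_n\varphi(\hat x)-S_n\psi(x)|\le 2\|u\|_\infty$. This bound is uniform in $\hat x$ and $n$, and $\|u\|_\infty<\infty$ by \eqref{eq:u-sup-bound}. Combined with $P_F(\varphi)=P_f(\psi)$, the two Gibbs denominators $\exp(S_n\varphi(\hat x)-nP_F(\varphi))$ and $\exp(S_n\psi(x)-nP_f(\psi))$ agree up to the factor $e^{\pm2\|u\|_\infty}$.

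Next, for $\eps$ small enough that $\eps_\pm(\eps)$ lie in the range where $\mu$ is weak Gibbs (possible since $\eps_\pm\to0$), Lemma~\ref{lem:radius-reparametrization} gives constants $K_n(\eps_\pm)$ with subexponential growth. These constants are uniform over $x\in G_f$, and hence over all $x=\pi(\hat x)$ with $\hat x\in G_F$, because $\pi(G_F)\subset G_f$. The upper bound then reads
\[
\widetilde\mu(B_F(\hat x,\eps,n))\le e^{a_n^+}\mu(B_f(x,\eps_+,n))\le e^{a_n^+}K_n(\eps_+)e^{S_n\psi(x)-nP_f(\psi)}\le e^{a_n^++2\|u\|_\infty}K_n(\eps_+)e^{S_n\varphi(\hat x)-nP_F(\varphi)}.
\]
The lower bound follows symmetrically using $\eps_-$ and $K_n(\eps_-)^{-1}$. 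Setting
\[
\widetilde K_n(\eps):=e^{a_n^+(\eps)+2\|u\|_\infty}\max\{K_n(\eps_-),K_n(\eps_+),1\}\ge1,
\]
we get $\frac1n\log\widetilde K_n(\eps)\to0$, since each of the three summands is $o(n)$. These constants are independent of the center $\hat x\in G_F$, which verifies \eqref{eq:weak-gibbs-def}–\eqref{eq:weak-gibbs-subexp} for $(F,\varphi)$ on $G_F$.

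The main point needing care is the bookkeeping of radii and uniformity rather than any hard estimate. The comparison radii $\eps_\pm(\eps)$ must fall in the "sufficiently small" regime of the weak Gibbs definition for $\mu$, and the resulting constants must depend only on $\eps$ and $n$, not on the center; Lemma~\ref{lem:radius-reparametrization} and the uniform-on-$G_f$ convention handle both. Finiteness of the pressures, assumed throughout this section, is also needed so that the equality $P_F(\varphi)=P_f(\psi)$ can be subtracted inside the exponent.
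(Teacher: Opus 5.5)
Your proposal is correct and follows essentially the same route as the paper's proof. It uses the same three ingredients: the telescoped coboundary bound $|S_n\varphi(\hat x)-S_n\psi(\pi\hat x)|\le 2\|u\|_\infty$, the identity $P_F(\varphi)=P_f(\psi)$ from Theorem~\ref{thm:pressure}, and the chaining of \eqref{eq:bowen-comp-forward} with the weak Gibbs bounds for $\mu$ at radii $\eps_\pm$, which gives the same constant $e^{a_n^+(\eps)+2\|u\|_\infty}\max\{K_n(\eps_-),K_n(\eps_+)\}$ (your extra $\max\{\cdot,1\}$ and your explicit check that $\eps_\pm$ lie in the small-radius regime are harmless bookkeeping refinements).
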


\begin{proof}
For every $\hat x$ and every $n$,
\begin{equation}
S_n\varphi(\hat x)
=S_n\psi(\pi\hat x)+u(\hat x)-u(F^n\hat x),
\label{eq:birkhoff-coboundary}
\end{equation}
so
\begin{equation}
\bigl|S_n\varphi(\hat x)-S_n\psi(\pi\hat x)\bigr|
\le2\|u\|_\infty.
\label{eq:birkhoff-bounded-error}
\end{equation}
Also $P_F(\varphi)=P_f(\psi)$ by
Theorem~\ref{thm:pressure}. Fix $\hat x\in G_F$ and put
$x=\pi(\hat x)\in G_f$. By \eqref{eq:bowen-comp-forward} and the weak Gibbs
bounds for $\mu$ at the radii $\eps_-$ and $\eps_+$,
\begin{align*}
\widetilde\mu(B_F(\hat x,\eps,n))
&\le e^{a_n^+(\eps)}K_n(\eps_+)
 \exp(S_n\psi(x)-nP_f(\psi))\\
&\le e^{a_n^+(\eps)}K_n(\eps_+)e^{2\|u\|_\infty}
 \exp(S_n\varphi(\hat x)-nP_F(\varphi)),
\end{align*}
and analogously
\begin{align*}
\widetilde\mu(B_F(\hat x,\eps,n))
&\ge e^{-a_n^+(\eps)}K_n(\eps_-)^{-1}
 \exp(S_n\psi(x)-nP_f(\psi))\\
&\ge e^{-a_n^+(\eps)}K_n(\eps_-)^{-1}e^{-2\|u\|_\infty}
 \exp(S_n\varphi(\hat x)-nP_F(\varphi)).
\end{align*}
Thus one may take, for instance,
\[
\widetilde K_n(\eps)
=e^{a_n^+(\eps)+2\|u\|_\infty}
\max\{K_n(\eps_-),K_n(\eps_+)\}.
\]
Then $n^{-1}\log\widetilde K_n(\eps)\to0$, proving the claim.
\end{proof}

\begin{theorem}[Backward weak Gibbs transfer]
\label{thm:weak-gibbs-backward}
Assume
$\varphi=\psi\circ\pi+u-u\circ F$, and let
$\widetilde\mu\leftrightarrow\mu$ be corresponding invariant measures.
Let $G_F\subset N$ and $G_f\subset M$ be such that every $x\in G_f$ has
a prescribed lift $\hat x\in G_F$. If $(\widetilde\mu,\mu)$ satisfies
backward subexponential Bowen-mass comparability on $(G_F,G_f)$ and
$\widetilde\mu$ is weak Gibbs for $(F,\varphi)$ on $G_F$, then $\mu$ is
weak Gibbs for $(f,\psi)$ on $G_f$.
\end{theorem}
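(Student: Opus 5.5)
The argument mirrors the proof of Theorem~\ref{thm:weak-gibbs-forward}, with the roles of the extension and the quotient exchanged. Fix a sufficiently small $\eps>0$, a point $x\in G_f$, and its prescribed lift $\hat x\in G_F$, so that $\pi(\hat x)=x$. Two ingredients are needed. The first is the Birkhoff-sum identity \eqref{eq:birkhoff-coboundary} with its consequence \eqref{eq:birkhoff-bounded-error}, namely $|S_n\psi(x)-S_n\varphi(\hat x)|\le2\|u\|_\infty$. The second is the pressure equality $P_f(\psi)=P_F(\varphi)$ from Theorem~\ref{thm:pressure}. Together they give
\[
e^{-2\|u\|_\infty}\exp\bigl(S_n\varphi(\hat x)-nP_F(\varphi)\bigr)
\le\exp\bigl(S_n\psi(x)-nP_f(\psi)\bigr)
\le e^{2\|u\|_\infty}\exp\bigl(S_n\varphi(\hat x)-nP_F(\varphi)\bigr).
\]

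Next, apply the backward comparability \eqref{eq:bowen-comp-backward} at the radius $\eps$. This bounds $\mu(B_f(x,\eps,n))$ above by $e^{a_n^-(\eps)}\widetilde\mu(B_F(\hat x,\eps_+,n))$ and below by $e^{-a_n^-(\eps)}\widetilde\mu(B_F(\hat x,\eps_-,n))$. Since $\hat x\in G_F$, the weak Gibbs property of $\widetilde\mu$ applies at the radii $\eps_\pm$, which are admissible for small $\eps$ because they tend to zero with $\eps$. This gives
\[
\widetilde\mu(B_F(\hat x,\eps_+,n))\le\widetilde K_n(\eps_+)\exp\bigl(S_n\varphi(\hat x)-nP_F(\varphi)\bigr),
\]
and similarly $\widetilde\mu(B_F(\hat x,\eps_-,n))\ge\widetilde K_n(\eps_-)^{-1}\exp\bigl(S_n\varphi(\hat x)-nP_F(\varphi)\bigr)$. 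Combining these with the display above yields the two-sided bound \eqref{eq:weak-gibbs-def} for $\mu$ at $x$, with constant
\[
K_n(\eps)=e^{a_n^-(\eps)+2\|u\|_\infty}\max\{\widetilde K_n(\eps_-),\widetilde K_n(\eps_+)\}.
\]

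Finally, I would check that $K_n(\eps)$ is independent of $x\in G_f$ and is subexponential. Independence from $x$ holds because $a_n^-$ and $\widetilde K_n$ are uniform in their centres by the uniform-on-$G$ convention. Subexponential growth follows from $a_n^-(\eps)/n\to0$, the fact that $2\|u\|_\infty$ is a constant (finite by \eqref{eq:u-sup-bound}), and Lemma~\ref{lem:radius-reparametrization} applied to the radii $\eps_\pm(\eps)$. The only point needing care, which I expect to be the main (mild) obstacle, is the uniformity bookkeeping. The Gibbs constants of $\widetilde\mu$ must be evaluated at reparametrized radii that depend on $\eps$ but not on $x$. The prescribed lift must lie in $G_F$, so that the Gibbs estimate for $\widetilde\mu$ is available at $\hat x$. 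Both are guaranteed by Definition~\ref{def:backward-bowen-comparison}.
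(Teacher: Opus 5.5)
Your proposal is correct and follows the paper's proof essentially step for step. You fix $x\in G_f$ with its prescribed lift $\hat x$, apply the weak Gibbs bounds for $\widetilde\mu$ at the radii $\eps_\pm$, insert the backward comparison \eqref{eq:bowen-comp-backward}, and absorb the bounded coboundary error and the identity $P_F(\varphi)=P_f(\psi)$ into the constant $e^{a_n^-(\eps)+2\|u\|_\infty}\max\{\widetilde K_n(\eps_-),\widetilde K_n(\eps_+)\}$, with your uniformity and radius-admissibility checks being a slightly more explicit version of the same bookkeeping.
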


\begin{proof}
Fix $x\in G_f$ and its prescribed lift $\hat x\in G_F$. Apply the weak
Gibbs bounds for $\widetilde\mu$ at the radii $\eps_-$ and $\eps_+$ and
then use \eqref{eq:bowen-comp-backward}. Combining these inequalities with
\eqref{eq:birkhoff-bounded-error} and
$P_F(\varphi)=P_f(\psi)$ yields
\begin{align*}
\mu(B_f(x,\eps,n))
&\le e^{a_n^-(\eps)}K_n(\eps_+)e^{2\|u\|_\infty}
\exp(S_n\psi(x)-nP_f(\psi)),\\
\mu(B_f(x,\eps,n))
&\ge e^{-a_n^-(\eps)}K_n(\eps_-)^{-1}e^{-2\|u\|_\infty}
\exp(S_n\psi(x)-nP_f(\psi)).
\end{align*}
The corresponding Gibbs constant is again subexponential, since
$a_n^-(\eps)=o(n)$ and \eqref{eq:weak-gibbs-subexp} holds for
$\widetilde\mu$.
\end{proof}

\begin{corollary}[Two-sided weak Gibbs equivalence]
\label{thm:weak-gibbs}
Assume $\varphi=\psi\circ\pi+u-u\circ F$. Let
$\widetilde\mu\leftrightarrow\mu$, and let $G_F,G_f$ be Gibbs sets for
which both forward and backward subexponential Bowen-mass comparability
hold. Then
\[
\mu\text{ is weak Gibbs for }(f,\psi)\text{ on }G_f
\quad\Longleftrightarrow\quad
\widetilde\mu\text{ is weak Gibbs for }(F,\varphi)\text{ on }G_F.
\]
\end{corollary}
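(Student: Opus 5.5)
The corollary follows by combining the two transfer theorems just proved. Each direction of the equivalence is one of them. The remaining task is to check that the hypotheses of each apply to the same pair $(G_F,G_f)$ and the same corresponding measures.

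For the implication ``$\Rightarrow$'', assume $\mu$ is weak Gibbs for $(f,\psi)$ on $G_f$. The standing assumption gives forward subexponential Bowen-mass comparability on $(G_F,G_f)$. Definition~\ref{def:forward-bowen-comparison} already includes the compatibility condition $\pi(G_F)\subset G_f$, so every hypothesis of Theorem~\ref{thm:weak-gibbs-forward} holds. That theorem shows $\widetilde\mu$ is weak Gibbs for $(F,\varphi)$ on $G_F$. The argument uses the bounded Birkhoff-sum defect \eqref{eq:birkhoff-bounded-error}, the pressure identity $P_F(\varphi)=P_f(\psi)$ of Theorem~\ref{thm:pressure}, and Lemma~\ref{lem:radius-reparametrization} to evaluate the quotient Gibbs constants at the radii $\eps_\pm(\eps)$.

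For ``$\Leftarrow$'', assume $\widetilde\mu$ is weak Gibbs for $(F,\varphi)$ on $G_F$. Backward comparability presupposes, through Definition~\ref{def:backward-bowen-comparison}, that every $x\in G_f$ has a prescribed lift $\hat x\in G_F$. Theorem~\ref{thm:weak-gibbs-backward} then applies directly and gives that $\mu$ is weak Gibbs for $(f,\psi)$ on $G_f$.

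There is no real analytic obstacle here. The only care needed is bookkeeping: the two comparability hypotheses use possibly different radius functions $\eps_\pm$ and error sequences $a_n^\pm$, and each direction must use only its own data. The uniform-on-$G_T$ convention also has to survive each direction, which it does because the constants produced in both transfer theorems do not depend on the center. Finally, the two structural conditions on the Gibbs sets, $\pi(G_F)\subset G_f$ and the existence of lifts, are exactly what the two definitions require. The statement's phrase ``Gibbs sets for which both comparabilities hold'' therefore implicitly includes both conditions, and I would note this explicitly in the proof.
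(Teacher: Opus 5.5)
Your proposal is correct and follows essentially the same route as the paper, which obtains each implication directly from the forward and backward weak Gibbs transfer theorems (Theorems~\ref{thm:weak-gibbs-forward} and~\ref{thm:weak-gibbs-backward}). Your explicit remark that the conditions $\pi(G_F)\subset G_f$ and the existence of prescribed lifts are built into the two comparability definitions is bookkeeping that the paper leaves implicit.
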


\begin{proof}
The forward implication is Theorem~\ref{thm:weak-gibbs-forward}, and the
reverse implication is Theorem~\ref{thm:weak-gibbs-backward}. The common
pressure identity and the bounded coboundary error are already contained in
those proofs. Hence no additional assumption is required beyond the two
Bowen-mass comparisons.
\end{proof}

\begin{remark}[What is automatic and what is not]
\label{rem:bowen-comparison-not-automatic}
The subexponential Bowen-mass comparison above is strictly stronger than
zero relative topological entropy and is not implied by the pressure
identity. Uniform continuity of $\pi$ gives one geometric inclusion for
Bowen balls: for every $\eta>0$ there exists $\eps>0$ such that
\[
\pi(B_F(\hat x,\eps,n))\subset B_f(\pi\hat x,\eta,n)
\quad\text{for every }\hat x\in N\text{ and }n\ge1.
\]
Consequently,
\[
\widetilde\mu(B_F(\hat x,\eps,n))
\le \mu(B_f(\pi\hat x,\eta,n)).
\]
The reverse comparison is not a consequence of fiber contraction alone:
contraction controls pairs of points that already lie in the same fiber,
whereas a base Bowen ball contains points in distinct fibers. Therefore
forward/backward Bowen-mass comparability is an additional quantitative
geometric/measure-theoretic hypothesis that must be verified in each
application.
\end{remark}

\subsection{Concrete geometric criteria for Bowen-mass comparability}

The abstract comparison hypotheses above can be verified from more primitive
geometric information in several important classes. We record three useful
criteria. They are stated only as sufficient conditions; none of them is
claimed to follow from uniform fiber contraction alone.

Whenever radius moduli are introduced below, we choose them monotonically
(after shrinking them if necessary). In particular, the lower radius moduli
are chosen cofinal at the origin: for every sufficiently small target radius
$\eta>0$ there is a sufficiently small $\rho>0$ for which
$\eta\le\delta_-(\rho)$. This harmless convention makes the backward
reparametrizations used below explicit.

\begin{proposition}[Invariant-graph criterion]
\label{prop:invariant-graph-bowen}
Assume that there exists a continuous invariant section
\[
s:M\longrightarrow N,
\qquad \pi\circ s=\mathrm{id}_M,
\qquad F\circ s=s\circ f.
\]
Let $\mu\in\M_f(M)$ and let $\widetilde\mu$ be its unique invariant lift.
Then $\widetilde\mu=s_*\mu$. If $G_f\subset M$ and $G_F=s(G_f)$, then
$(\widetilde\mu,\mu)$ satisfies both forward and backward subexponential
Bowen-mass comparability on $(G_F,G_f)$ with
\[
a_n^+(\eps)=a_n^-(\eps)=0.
\]
More precisely, for every sufficiently small $\eps>0$ there are radii
$\delta_-(\eps),\delta_+(\eps)>0$, tending to zero with $\eps$, such that for
every $x\in G_f$, every $n\ge1$, and $\hat x=s(x)$,
\[
\mu(B_f(x,\delta_-,n))
\le
\widetilde\mu(B_F(\hat x,\eps,n))
\le
\mu(B_f(x,\delta_+,n)).
\]
\end{proposition}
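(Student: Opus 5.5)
First I show that $\widetilde\mu=s_*\mu$. The identity $F\circ s=s\circ f$ gives $F_*(s_*\mu)=s_*f_*\mu=s_*\mu$, so $s_*\mu\in\M_F(N)$. Also $\pi_*s_*\mu=(\pi\circ s)_*\mu=\mu$. Since $\pi_*$ is injective on invariant measures (Theorem~\ref{thm:lift-bijection}), $s_*\mu$ is the unique invariant lift of $\mu$. As a consequence, $\widetilde\mu$ is carried by the compact set $s(M)$, and every Bowen-ball mass can be computed there:
\[
\widetilde\mu(B_F(\hat x,\eps,n))=\mu\bigl(s^{-1}(B_F(\hat x,\eps,n))\bigr)
=\mu\bigl(\{y\in M: s(y)\in B_F(s(x),\eps,n)\}\bigr).
\]

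The comparison then reduces to two set inclusions in $M$, each controlled by a uniform continuity modulus. Because $s$ and $\pi$ are continuous on compact spaces, they are uniformly continuous.

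For the upper bound, I choose $\delta_+(\eps)$ from uniform continuity of $\pi$. I want $d_N(\hat a,\hat b)<\eps$ to imply $d_M(\pi\hat a,\pi\hat b)<\delta_+$. To make $\delta_+\to0$, I take $\delta_+(\eps)$ to be the modulus $\omega_\pi(\eps):=\sup\{d_M(\pi\hat a,\pi\hat b):d_N(\hat a,\hat b)<\eps\}$ plus $\eps$. Now let $y$ satisfy $d_N(F^j s(y),F^j s(x))<\eps$ for $0\le j<n$. Applying $\pi$ and using $\pi F^j s=f^j$ gives $d_M(f^jy,f^jx)<\delta_+$. Hence $s^{-1}(B_F(s(x),\eps,n))\subset B_f(x,\delta_+,n)$.

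For the lower bound, I use uniform continuity of $s$ and pick $\delta_-(\eps)>0$ such that $d_M(a,b)<\delta_-$ implies $d_N(s(a),s(b))<\eps$. I also require $\delta_-\le\eps$, so that $\delta_-\to0$. Let $y\in B_f(x,\delta_-,n)$. By $F^j s=s f^j$,
\[
d_N(F^js(y),F^js(x))=d_N(s(f^jy),s(f^jx))<\eps,\qquad 0\le j<n.
\]
So $B_f(x,\delta_-,n)\subset s^{-1}(B_F(s(x),\eps,n))$.

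Taking $\mu$-measures of the two inclusions gives the displayed chain of inequalities with $a_n^\pm=0$. This is forward comparability as in Definition~\ref{def:forward-bowen-comparison}, using $\pi(G_F)=G_f$. For backward comparability in the sense of Definition~\ref{def:backward-bowen-comparison}, I take the prescribed lift $\hat x=s(x)$ and must bound $\mu(B_f(x,\eps,n))$ by $\widetilde\mu$-masses of $F$-balls with radii tending to zero. I apply the same two inclusions with the roles of the radii exchanged. From the lower inclusion at target radius $\eps'$ with $\delta_-(\eps')\ge\eps$, I get $\mu(B_f(x,\eps,n))\le\widetilde\mu(B_F(\hat x,\eps',n))$, where I take $\eps'=\omega_s(\eps)+\eps$ for the modulus $\omega_s$ of $s$. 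From the upper inclusion, using $\omega_\pi$, I get $\widetilde\mu(B_F(\hat x,\eta,n))\le\mu(B_f(x,\eps,n))$ for any $\eta$ with $\omega_\pi(\eta)+\eta\le\eps$.

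The only delicate step is bookkeeping the radii so that they are positive and tend to zero. Here I need care with strict versus non-strict inequalities in the moduli. I would handle this by using the moduli $\omega_\pi$ and $\omega_s$, which tend to $0$ by uniform continuity, and by the monotone, cofinal convention fixed before the proposition. Everything else is direct.
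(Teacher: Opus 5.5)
Your proposal is correct and follows the paper's proof: you identify $\widetilde\mu=s_*\mu$ by uniqueness of invariant lifts, then sandwich $s^{-1}(B_F(s(x),\eps,n))$ between $B_f(x,\delta_-,n)$ and $B_f(x,\delta_+,n)$ using $F^j\circ s=s\circ f^j$ and uniform continuity of $s$ and $\pi$. The paper writes the same inclusions as $s(B_f(x,\delta_-,n))\subset B_F(s(x),\eps,n)\cap s(M)\subset s(B_f(x,\delta_+,n))$. Your explicit moduli $\omega_\pi(\eps)+\eps$ and $\omega_s(\eps)+\eps$ make the choice of radii tending to zero slightly more careful than the paper's wording.
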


\begin{proof}
Since $\mu$ is $f$-invariant and $F\circ s=s\circ f$,
\[
F_*s_*\mu=s_*f_*\mu=s_*\mu.
\]
Moreover $\pi_*s_*\mu=\mu$. By Theorem~\ref{thm:lift-bijection}, the invariant
lift is unique, hence $\widetilde\mu=s_*\mu$.

Fix $\eps>0$. Since $M$ is compact and $s$ is continuous, $s$ is uniformly
continuous. Choose $\delta_->0$ such that
\[
d_M(x,y)<\delta_-
\quad\Longrightarrow\quad
d_N(s(x),s(y))<\eps.
\]
If $y\in B_f(x,\delta_-,n)$, then for every $0\le j<n$,
\[
d_N(F^js(x),F^js(y))
=d_N(s(f^jx),s(f^jy))<\eps,
\]
so
\[
s(B_f(x,\delta_-,n))\subset B_F(s(x),\eps,n)\cap s(M).
\]
Conversely, $\pi|_{s(M)}:s(M)\to M$ is continuous and $s(M)$ is compact;
therefore it is uniformly continuous. Choose $\delta_+>0$ so that
\[
d_N(s(x),s(y))<\eps
\quad\Longrightarrow\quad
d_M(x,y)<\delta_+.
\]
Then
\[
B_F(s(x),\eps,n)\cap s(M)
\subset s(B_f(x,\delta_+,n)).
\]
Since $\widetilde\mu=s_*\mu$ is supported on $s(M)$, taking measures gives the
claimed inequalities. Forward comparability follows immediately. Backward
comparability follows by applying the same inclusions after reparametrizing
the radii with the uniform-continuity moduli of $s$ and
$\pi|_{s(M)}=s^{-1}$. No multiplicative mass error is introduced, so the
logarithmic errors are identically zero.
\end{proof}

The next criterion isolates the measure-theoretic nondegeneracy needed to
complement the geometric inclusion provided by the skew-product structure.

\begin{proposition}[Uniform skew-product criterion]
\label{prop:uniform-skew-bowen}
Assume that $N=M\times Y$ is endowed with a metric uniformly equivalent to a
product metric and
\[
F(x,y)=\bigl(f(x),G_x(y)\bigr).
\]
Assume uniform fiber contraction: there is $0<\lambda<1$ such that
\[
d_Y(G_x(y),G_x(y'))\le\lambda d_Y(y,y')
\]
for all admissible points. Assume also uniform transverse continuity: there is
a modulus $\omega:[0,\infty)\to[0,\infty)$ with $\omega(t)\to0$ as $t\to0$
and
\[
d_Y(G_x(y),G_{x'}(y))\le \omega(d_M(x,x'))
\]
for all admissible points.

Let
\[
\widetilde\mu=\int_M\widetilde\mu_x\,d\mu(x)
\]
be the disintegration over $\pi(x,y)=x$. Let $G_F\subset N$ and
$G_f=\pi(G_F)$. Assume the following \emph{averaged subexponential fiber
nondegeneracy}: for every sufficiently small fiber radius $r>0$ and every
sufficiently small base Bowen radius $\delta>0$ there are numbers
$q_n(r,\delta)\ge0$ such that
\[
\frac{q_n(r,\delta)}{n}\longrightarrow0
\qquad (n\to\infty),
\]
and, uniformly for $(x,y)\in G_F$ and every $n\ge1$,
\begin{equation}
\label{eq:averaged-lower-fiber-ball-masses}
\int_{B_f(x,\delta,n)}
\widetilde\mu_{x'}(B_Y(y,r))\,d\mu(x')
\ge e^{-q_n(r,\delta)}\,\mu(B_f(x,\delta,n)).
\end{equation}
Then $(\widetilde\mu,\mu)$ satisfies forward and backward subexponential
Bowen-mass comparability on $(G_F,G_f)$.
\end{proposition}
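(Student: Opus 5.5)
The plan is to prove two Bowen-ball inclusions in the product structure: an outer inclusion coming from the projection, and an inner inclusion coming from a "product box" inside an $F$-Bowen ball. Each inclusion turns into a mass inequality, either directly or through the averaged nondegeneracy hypothesis \eqref{eq:averaged-lower-fiber-ball-masses}. Both the forward and the backward comparisons then follow by choosing radii appropriately.

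Fix constants $C\ge1$ and a modulus comparing $d_N$ with the product metric $\max\{d_M,d_Y\}$; this is possible by uniform equivalence.

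\textbf{Outer inclusion.} As in Remark~\ref{rem:bowen-comparison-not-automatic}, uniform continuity of $\pi$ gives $\pi(B_F(\hat x,\eps,n))\subset B_f(x,\eta(\eps),n)$, with $\eta(\eps)\to0$. Taking measures,
\[
\widetilde\mu(B_F(\hat x,\eps,n))\le\mu(B_f(x,\eta(\eps),n)).
\]
This involves no multiplicative error.

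\textbf{Inner inclusion (the key step).} Let $(x,y)\in N$, $x'\in B_f(x,\delta,n)$ and $y'\in B_Y(y,r)$. Write $(x_j,y_j)=F^j(x,y)$, $(x_j',y_j')=F^j(x',y')$ and $e_j=d_Y(y_j,y_j')$. By the triangle inequality, fiber contraction and transverse continuity,
\[
e_{j+1}\le d_Y\bigl(G_{x_j}(y_j),G_{x_j}(y_j')\bigr)+d_Y\bigl(G_{x_j}(y_j'),G_{x_j'}(y_j')\bigr)\le\lambda e_j+\omega(\delta)
\]
for $0\le j<n-1$. Inductively, $e_j\le r+\omega(\delta)/(1-\lambda)$ for all $j<n$. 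Hence, given a target $\eps$, choose $r=r(\eps)$ and $\delta=\delta(\eps)$ with
\[
C\max\Bigl\{\delta,\;r+\frac{\omega(\delta)}{1-\lambda}\Bigr\}<\eps .
\]
This yields $B_f(x,\delta,n)\times B_Y(y,r)\subset B_F((x,y),\eps,n)$. Integrating the disintegration over this box and applying \eqref{eq:averaged-lower-fiber-ball-masses} gives
\[
\widetilde\mu(B_F((x,y),\eps,n))\ge e^{-q_n(r,\delta)}\mu(B_f(x,\delta,n)).
\]

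\textbf{Forward comparability.} Take $\eps_-=\delta(\eps)$, $\eps_+=\eta(\eps)$ and $a_n^+=q_n(r(\eps),\delta(\eps))$. Then \eqref{eq:bowen-comp-forward} follows from the two inclusions, and $a_n^+/n\to0$.

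\textbf{Backward comparability.} For $x\in G_f=\pi(G_F)$, prescribe a lift $\hat x=(x,y)\in G_F$.
\begin{itemize}[label=\textbullet]
\item Upper bound: run the inner inclusion with base radius $\delta=\eps$ and fiber radius $r=\eps$. The resulting box lies in $B_F(\hat x,\eps_+,n)$ with $\eps_+:=2C(2\eps+\omega(\eps)/(1-\lambda))\to0$. This gives $\mu(B_f(x,\eps,n))\le e^{q_n(\eps,\eps)}\widetilde\mu(B_F(\hat x,\eps_+,n))$.
\item Lower bound: choose $\eps_-$ so that $\eta(\eps_-)\le\eps$, using monotone, cofinal moduli as in the stated convention. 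The outer inclusion then gives $\widetilde\mu(B_F(\hat x,\eps_-,n))\le\mu(B_f(x,\eps,n))$.
\end{itemize}
Set $a_n^-=q_n(\eps,\eps)$; then \eqref{eq:bowen-comp-backward} holds with $a_n^-/n\to0$.

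The main obstacle is the inner inclusion. The recursion for $e_j$ must be set up so that the transverse error $\omega(\delta)$ accumulates only geometrically, which is exactly where contraction is used. The constants from uniform equivalence with the product metric must also be tracked, so that all reparametrized radii tend to zero with $\eps$ and the nondegeneracy hypothesis is invoked only at "sufficiently small" radii.
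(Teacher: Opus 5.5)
Your proposal is correct and follows essentially the paper's proof. The projection gives the outer inclusion $B_F\subset\pi^{-1}(B_f)$, the recursion $e_{j+1}\le\lambda e_j+\omega(\delta)$ gives the product-box inclusion, and the averaged nondegeneracy hypothesis turns that box into the lower mass bound, with radii reparametrized for the backward direction. The one variation is the backward upper bound: you apply the hypothesis directly at base and fiber radius $\eps$ and enlarge the upstairs radius to $\eps_+$, whereas the paper inverts $\delta_-$ through its cofinality convention; your shortcut is valid and slightly cleaner. If the metric is only uniformly (not Lipschitz) equivalent to the product metric, replace the factor $C$ by the corresponding modulus.
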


\begin{proof}
Fix a sufficiently small Bowen radius $\eps>0$. Uniform equivalence of the
ambient metric with a product metric provides product radii tending to zero
with $\eps$; thus it suffices to establish the desired inclusions in product
coordinates with radii depending only on $\eps$.

Let $(x,y),(x',y')\in M\times Y$, and write their fiber coordinates after $j$
iterates as $y_j,y_j'$. Iterating the contraction/transverse-continuity bounds
gives
\[
d_Y(y_j,y_j')
\le \lambda^j d_Y(y,y')
+\sum_{k=0}^{j-1}\lambda^{j-1-k}
\omega\bigl(d_M(f^kx,f^kx')\bigr).
\]
Hence, if $x'\in B_f(x,\delta,n)$ and $d_Y(y,y')<r$, then
\[
d_Y(y_j,y_j')
\le r+\frac{\omega(\delta)}{1-\lambda},
\qquad 0\le j<n.
\]
Choose first $r=r(\eps)>0$ and then $\delta_-(\eps)>0$ so that the right-hand
side lies inside the fiber component of the chosen Bowen radius. Together with
the base component of the product metric this gives
\begin{equation}
\label{eq:skew-product-lower-inclusion}
\bigl\{(x',y'):\ x'\in B_f(x,\delta_-,n),\
 y'\in B_Y(y,r)\bigr\}
\subset B_F((x,y),\eps,n).
\end{equation}
By uniform continuity of $\pi$, for some $\delta_+(\eps)\to0$,
\begin{equation}
\label{eq:skew-product-upper-inclusion}
B_F((x,y),\eps,n)
\subset \pi^{-1}(B_f(x,\delta_+,n)).
\end{equation}
Disintegrating and using
\eqref{eq:averaged-lower-fiber-ball-masses} with
$r=r(\eps)$ and $\delta=\delta_-(\eps)$ in the lower inclusion gives
\[
\widetilde\mu(B_F((x,y),\eps,n))
\ge e^{-q_n(r(\eps),\delta_-(\eps))}
\mu(B_f(x,\delta_-(\eps),n)),
\]
whereas the upper inclusion gives
\[
\widetilde\mu(B_F((x,y),\eps,n))
\le \mu(B_f(x,\delta_+(\eps),n)).
\]
Thus forward comparability holds with
\[
a_n^+(\eps)=q_n(r(\eps),\delta_-(\eps))=o(n).
\]

For the backward comparison, fix a small base radius $\eta>0$. Choose
$\rho_-(\eta)>0$ so small that
$\delta_+(\rho_-(\eta))\le\eta$. Applying the upper inequality above at radius
$\rho_-(\eta)$ yields
\[
\widetilde\mu(B_F((x,y),\rho_-(\eta),n))
\le \mu(B_f(x,\eta,n)).
\]
Next choose $\rho_+(\eta)>0$ so that
$\eta\le\delta_-(\rho_+(\eta))$. Applying the lower inequality at radius
$\rho_+(\eta)$ and using monotonicity of base Bowen balls gives
\[
\mu(B_f(x,\eta,n))
\le e^{q_n(r(\rho_+(\eta)),\delta_-(\rho_+(\eta)))}
\widetilde\mu(B_F((x,y),\rho_+(\eta),n)).
\]
Therefore backward comparability holds with
\[
a_n^-(\eta)
=q_n(r(\rho_+(\eta)),\delta_-(\rho_+(\eta)))=o(n).
\]
Hence the two-sided subexponential Bowen-mass comparison follows.
\end{proof}

\begin{remark}
The pointwise lower bound
$\widetilde\mu_{x'}(B_Y(y,r))\ge c_r>0$ used in more uniform settings is a
special case of \eqref{eq:averaged-lower-fiber-ball-masses}, corresponding to
$q_n(r,\delta)\equiv |\log c_r|$. The present formulation allows spatial
nonuniformity, but the subexponential envelope must be uniform over the Gibbs
set. Mere pointwise convergence of center-dependent errors is not enough for
the uniform weak Gibbs definition used above.
\end{remark}

\begin{proposition}[Local-product chart criterion]
\label{prop:local-product-bowen}
Let $G_F\subset N$. Assume that for every $\hat x\in G_F$ there are
neighborhoods $U_{\hat x}^{\mathrm{base}}\subset M$,
$U_{\hat x}^{\mathrm{fib}}$ in a fixed local fiber model, a neighborhood
$R_{\hat x}\subset N$, and a homeomorphism
\[
\Theta_{\hat x}:
U_{\hat x}^{\mathrm{base}}\times U_{\hat x}^{\mathrm{fib}}
\longrightarrow R_{\hat x}
\]
such that
\[
\pi\circ\Theta_{\hat x}(z,\xi)=z
\]
whenever the expression is defined.

Assume that for every sufficiently small $\eps>0$ there are a base radius
$\delta_-(\eps)>0$, a fiber radius $r(\eps)>0$, local product boxes
$Q_n(\hat x,\eps)\subset R_{\hat x}$, and a radius modulus
$\rho(\eps)\to0$ such that, for every $\hat x\in G_F$ and every $n\ge1$,
\begin{equation}
\label{eq:local-product-chart-lower}
\Theta_{\hat x}\!\left(
B_f(\pi\hat x,\delta_-(\eps),n)\times
B^{\mathrm{fib}}(\hat x,r(\eps))
\right)
\subset
Q_n(\hat x,\eps)
\subset
B_F(\hat x,\rho(\eps),n).
\end{equation}
Assume moreover a lower product nondegeneracy estimate: for every sufficiently
small $\eps>0$ there are numbers $b_n(\eps)\ge0$ with $b_n(\eps)=o(n)$ and a
constant $c_\eps>0$ such that
\begin{align}
\label{eq:local-product-chart-mass}
&\widetilde\mu\!\left(
\Theta_{\hat x}\!\left(
B_f(\pi\hat x,\delta_-(\eps),n)\times
B^{\mathrm{fib}}(\hat x,r(\eps))
\right)\right)
\nonumber\\
&\hspace{2cm}\ge
c_\eps e^{-b_n(\eps)}
\mu(B_f(\pi\hat x,\delta_-(\eps),n))
\end{align}
for every $\hat x\in G_F$ and every $n\ge1$.

Then $(\widetilde\mu,\mu)$ satisfies both forward and backward
subexponential Bowen-mass comparability on $(G_F,\pi(G_F))$. More precisely,
for every small target radius one may take logarithmic errors equal to
\[
\widehat b_n(\eps)+O_\eps(1),
\]
where $\widehat b_n(\eps)$ is the maximum of the finitely many
$b_n(\rho_j(\eps))$ generated by the radius reparametrizations used in the
forward and backward comparisons. Hence $\widehat b_n(\eps)=o(n)$. If the
lower product distortion is uniformly bounded in $n$, then
$a_n^\pm(\eps)=O_\eps(1)$.
\end{proposition}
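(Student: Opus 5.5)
The plan is to follow the structure of the proof of Proposition~\ref{prop:uniform-skew-bowen}. First I would derive a two-sided inequality at a fixed radius $\eps$, and then obtain forward and backward comparability by reparametrizing radii as in Lemma~\ref{lem:radius-reparametrization}.

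\textbf{Lower bound.} Fix a small $\eps>0$, $\hat x\in G_F$, $x=\pi\hat x$ and $n\ge1$. By the second inclusion in \eqref{eq:local-product-chart-lower}, the set $\Theta_{\hat x}(B_f(x,\delta_-(\eps),n)\times B^{\mathrm{fib}}(\hat x,r(\eps)))$ lies inside $B_F(\hat x,\rho(\eps),n)$. Monotonicity of $\widetilde\mu$ and \eqref{eq:local-product-chart-mass} then give
\[
\widetilde\mu(B_F(\hat x,\rho(\eps),n))\ge c_\eps e^{-b_n(\eps)}\mu(B_f(x,\delta_-(\eps),n)).
\]

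\textbf{Upper bound.} This is the easy direction and comes from Remark~\ref{rem:bowen-comparison-not-automatic}. Uniform continuity of $\pi$ gives a modulus $\delta_+(\eta)\to0$ with $\pi(B_F(\hat x,\eta,n))\subset B_f(x,\delta_+(\eta),n)$. Since $\pi_*\widetilde\mu=\mu$, this yields
\[
\widetilde\mu(B_F(\hat x,\eta,n))\le\mu(B_f(x,\delta_+(\eta),n)),
\]
with no error term.

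\textbf{Forward comparability.} Take a target radius $\eta$. Using the cofinality convention fixed before Proposition~\ref{prop:invariant-graph-bowen}, choose $\eps=\eps(\eta)$ with $\rho(\eps)\le\eta$, and require $\eps(\eta)\to0$ as $\eta\to0$. Set $\eps_-(\eta):=\delta_-(\eps(\eta))$, which tends to zero. Monotonicity of Bowen balls in the radius then gives
\[
\widetilde\mu(B_F(\hat x,\eta,n))\ge c_{\eps}e^{-b_n(\eps)}\mu(B_f(x,\eps_-,n)).
\]
Setting $\eps_+:=\delta_+(\eta)$ gives the matching upper bound. Hence one may take $a_n^+(\eta)=b_n(\eps(\eta))+|\log c_{\eps(\eta)}|$, which is $o(n)$ and of the advertised form $\widehat b_n+O(1)$.

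\textbf{Backward comparability.} Take a base radius $\eta$. Choose $\rho_+$ with $\eta\le\delta_-(\rho_+)$ and apply the lower bound at $\rho_+$. This gives
\[
\mu(B_f(x,\eta,n))\le c_{\rho_+}^{-1}e^{b_n(\rho_+)}\widetilde\mu(B_F(\hat x,\rho(\rho_+),n)),
\]
so set $\eps_+:=\rho(\rho_+(\eta))$. Next choose $\rho_-$ with $\delta_+(\rho_-)\le\eta$ and set $\eps_-:=\rho_-$; the upper bound then gives $\widetilde\mu(B_F(\hat x,\eps_-,n))\le\mu(B_f(x,\eta,n))$. The resulting error is $a_n^-=b_n(\rho_+(\eta))+|\log c_{\rho_+}|$. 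Each $\hat x\in G_F$ serves as the prescribed lift of $\pi\hat x$.

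\textbf{Main obstacle.} The delicate point is the bookkeeping of radii. All reparametrized radii must tend to zero with the target radius: $\rho_\pm(\eta)\to0$ and $\rho(\rho_+(\eta))\to0$. The constants must also stay uniform in $\hat x$, which holds because $\delta_\pm$, $r$, $\rho$, $c_\eps$ and $b_n$ depend only on $\eps$.

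To make the radii behave, I would choose all moduli monotone. I would take $\rho_+(\eta)$ to be the smallest admissible (infimum-type) choice, so that it tends to zero because $\delta_-$ is monotone and positive. The final claims then follow directly. The maximum of the finitely many $b_n(\rho_j)$ used above is $o(n)$. If $\sup_n b_n(\eps)<\infty$, the errors are $O_\eps(1)$.
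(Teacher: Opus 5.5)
Your proposal is correct and follows essentially the same route as the paper's proof. You get the lower bound from the chain of inclusions in \eqref{eq:local-product-chart-lower} together with \eqref{eq:local-product-chart-mass}, and the error-free upper bound from uniform continuity of $\pi$ and $\pi_*\widetilde\mu=\mu$; then you use the same radius reparametrizations, namely a parameter with $\rho(\eps)\le\eta$ for the forward comparison, and the projection modulus together with cofinality of $\delta_-$ for the backward one, with your extra bookkeeping that the radii tend to zero being exactly what the paper's monotone/cofinal radius convention is meant to supply.
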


\begin{proof}
The first inclusion in \eqref{eq:local-product-chart-lower} and the lower mass
estimate \eqref{eq:local-product-chart-mass} give
\[
\widetilde\mu(B_F(\hat x,\rho(\eps),n))
\ge
c_\eps e^{-b_n(\eps)}
\mu(B_f(\pi\hat x,\delta_-(\eps),n)).
\]
On the other hand, uniform continuity of $\pi$ gives, for every sufficiently
small Bowen radius $\rho>0$, a base radius $\eta(\rho)\to0$ such that
\[
B_F(\hat x,\rho,n)
\subset
\pi^{-1}(B_f(\pi\hat x,\eta(\rho),n)),
\]
and therefore
\[
\widetilde\mu(B_F(\hat x,\rho,n))
\le
\mu(B_f(\pi\hat x,\eta(\rho),n)).
\]
We first make the forward comparison explicit. Given a small target Bowen
radius $\tau>0$, choose a parameter $\eps_\tau>0$ so that
$\rho(\eps_\tau)\le\tau$. Then monotonicity of Bowen balls gives
\[
\widetilde\mu(B_F(\hat x,\tau,n))
\ge c_{\eps_\tau}e^{-b_n(\eps_\tau)}
\mu(B_f(\pi\hat x,\delta_-(\eps_\tau),n)).
\]
Together with the projection estimate at radius $\tau$, this is forward
comparability with logarithmic error
$b_n(\eps_\tau)+|\log c_{\eps_\tau}|=o(n)$.

For backward comparability, fix a small base radius $\eta>0$. By the
monotone radius convention, choose a Bowen radius $\tau_-(\eta)>0$ such that
$\eta(\tau_-(\eta))\le\eta$. The projection estimate then gives
\[
\widetilde\mu(B_F(\hat x,\tau_-(\eta),n))
\le\mu(B_f(\pi\hat x,\eta,n)).
\]
Next, by cofinality of $\delta_-$ at the origin, choose a parameter
$\eps_+(\eta)>0$ so that
\[
\eta\le\delta_-(\eps_+(\eta)).
\]
The lower product estimate and monotonicity of base Bowen balls yield
\[
\mu(B_f(\pi\hat x,\eta,n))
\le c_{\eps_+(\eta)}^{-1}e^{b_n(\eps_+(\eta))}
\widetilde\mu(B_F(\hat x,\rho(\eps_+(\eta)),n)).
\]
Thus the backward logarithmic error is
$b_n(\eps_+(\eta))+|\log c_{\eps_+(\eta)}|=o(n)$. Only finitely many
radius reparametrizations are used for each target radius, so one may collect
their errors into
\[
\widehat b_n(\eps)+O_\eps(1),
\qquad
\widehat b_n(\eps)=\max_j b_n(\rho_j(\eps))=o(n).
\]
This proves the two-sided comparison.
\end{proof}

\section{Sequential Gibbs Measures and Fiber-Contraction}\label{sec:sequential-gibbs}

The weak Gibbs results above require estimates at every time. In many
non-uniformly hyperbolic situations, however, Gibbs estimates are available
only along a point-dependent subsequence of times. This motivates a
sequential version of the transfer mechanism. The formulation below is
adapted to Bowen balls and is deliberately separated from the symbolic
cylinder definition of Ferreira--Oliveira \cite{FerreiraOliveira}; the
relation with that definition is discussed at the end of the section.

\begin{definition}[Bowen sequential Gibbs]
\label{def:bowen-sequential-gibbs}
Let $T:X\to X$ be continuous, let $\phi\in C(X)$, let $\nu$ be a probability
measure, and let $G\subset X$ be measurable with $\nu(G)=1$. We say that
$\nu$ is \emph{Bowen sequential Gibbs for $(T,\phi)$ on $G$} if there exist
$P\in\mathbb R$ and, for every $x\in G$, an increasing sequence
$n_1(x)<n_2(x)<\cdots$, chosen independently of the Bowen radius, such that
the following uniform property holds: for every sufficiently small
$\eps>0$ there exists $K(\eps)\ge1$, independent of $x$, $i$, and $j$,
such that, for every $x\in G$, every $i\ge1$, and every
$0\le j<n_i(x)$,
\begin{equation}
 K(\eps)^{-1}
 \le
 \frac{\nu\bigl(B_T(T^jx,\eps,n_i(x)-j)\bigr)}
 {\exp\bigl(S_{n_i(x)-j}\phi(T^jx)-(n_i(x)-j)P\bigr)}
 \le K(\eps).
 \label{eq:bowen-sequential-gibbs}
\end{equation}
Any such sequence is called an \emph{admissible Bowen Gibbs-time sequence}
for $x$, and $P$ is called the \emph{sequential Gibbs pressure} associated
with these estimates.
\end{definition}

\begin{remark}[Uniformity convention]
The constant $K(\eps)$ in Definition~\ref{def:bowen-sequential-gibbs} is
uniform on the full-measure Gibbs set $G$. Only the admissible sequence
$n_i(x)$ is allowed to depend on the center. This matches the uniform Gibbs
constant in the classical symbolic sequential Gibbs formulation, while the
Bowen radii are permitted to vary through the radius reparametrizations used
below.
\end{remark}

\begin{remark}[Admissible versus maximal Gibbs times]
The preceding definition records one admissible sequence along which all
suffix estimates hold. We do not claim that this sequence is maximal among
all admissible times. Accordingly, the transfer results below prove that a
prescribed admissible Gibbs-time sequence remains admissible after passage
between quotient and extension. This is weaker than identifying the maximal
set of all Gibbs times.
\end{remark}

The tail requirement in \eqref{eq:bowen-sequential-gibbs} mirrors the
standard symbolic definition of sequential Gibbs measures, in which the
same estimate is required for every suffix of a Gibbs cylinder. We do not
identify $P$ a priori with full topological pressure: in the symbolic theory
it is naturally associated with the pressure carried by the Gibbs set.

\begin{definition}[Blockwise Bowen sequential weak Gibbs]
\label{def:bowen-sequential-weak}
With the notation above, $\nu$ is \emph{blockwise Bowen sequential weak
Gibbs} with sequential pressure $P$ if, for every $x\in G$, there is an
increasing sequence $n_i(x)\to\infty$, chosen independently of the Bowen
radius, such that for every sufficiently small $\eps>0$ there are constants
$K_i(x,\eps)\ge1$ for which \eqref{eq:bowen-sequential-gibbs} holds with
$K(\eps)$ replaced by $K_i(x,\eps)$, uniformly for all
$0\le j<n_i(x)$, and
\begin{equation}
 \frac{1}{n_i(x)}\log K_i(x,\eps)\longrightarrow0.
 \label{eq:sequential-weak-subexp}
\end{equation}
\end{definition}

\begin{remark}[Normalization of the weak error]
The error in Definition~\ref{def:bowen-sequential-weak} is normalized by the
full Gibbs-block length $n_i(x)$, not by the suffix length $n_i(x)-j$. Thus the
definition is intentionally weaker on very short suffixes. The adjective
\emph{blockwise} will be used to avoid confusing this convention with stronger
sequential weak Gibbs notions in which the error is required to be
subexponential relative to every suffix length.
\end{remark}

\begin{definition}[Forward sequential Bowen-mass comparability]
\label{def:sequential-forward-comparison}
Let $\widetilde\mu\leftrightarrow\mu$ be corresponding invariant measures,
let $G_f\subset M$ have full $\mu$-measure, and suppose that an admissible
sequence $n_i(x)$ is fixed for every $x\in G_f$. We say that \emph{forward
sequential Bowen-mass comparability} holds if there is a full
$\widetilde\mu$-measure set $G_F\subset\pi^{-1}(G_f)$ such that, for every
sufficiently small $\eps>0$, there are radii
$\eps_-(\eps),\eps_+(\eps)>0$ tending to zero with $\eps$ and numbers
$a_i(\hat x,\eps)\ge0$ satisfying
\begin{equation}
 \frac{a_i(\hat x,\eps)}{n_i(\pi\hat x)}\longrightarrow0
 \qquad\text{for every }\hat x\in G_F,
 \label{eq:seq-mass-error-subexp}
\end{equation}
such that, writing $x=\pi\hat x$ and $m=n_i(x)-j$, for every
$0\le j<n_i(x)$,
\begin{align}
 e^{-a_i(\hat x,\eps)}
 \mu\bigl(B_f(f^jx,\eps_-,m)\bigr)
 &\le
 \widetilde\mu\bigl(B_F(F^j\hat x,\eps,m)\bigr)
 \nonumber\\
 &\le
 e^{a_i(\hat x,\eps)}
 \mu\bigl(B_f(f^jx,\eps_+,m)\bigr).
 \label{eq:seq-forward-comparison}
\end{align}
If, for each fixed $\eps$, the errors $a_i(\hat x,\eps)$ are bounded
uniformly in $\hat x$ and $i$, we say that the comparison is
\emph{uniformly bounded}.
\end{definition}

\begin{definition}[Backward sequential Bowen-mass comparability]
\label{def:sequential-backward-comparison}
Let $G_f\subset M$ have full $\mu$-measure. Assume that a measurable
prescribed lift
\[
 \ell:G_f\to N,
 \qquad \pi\circ\ell=\operatorname{id}_{G_f},
\]
is fixed, with $\ell(G_f)$ contained in a full $\widetilde\mu$-measure set
$G_F$ on which an admissible Gibbs-time sequence $n_i(\hat x)$ is defined.
Set
\[
 n_i(x):=n_i(\ell(x)).
\]
We say that \emph{backward sequential Bowen-mass comparability along $\ell$}
holds if, for every sufficiently small $\eps>0$, there are radii
$\eps_-(\eps),\eps_+(\eps)>0$ tending to zero with $\eps$ and numbers
$b_i(x,\eps)\ge0$ satisfying
\[
 \frac{b_i(x,\eps)}{n_i(x)}\longrightarrow0
\]
such that, for every $x\in G_f$, every $0\le j<n_i(x)$, and
$m=n_i(x)-j$,
\begin{align}
 e^{-b_i(x,\eps)}
 \widetilde\mu\bigl(B_F(F^j\ell(x),\eps_-,m)\bigr)
 &\le \mu\bigl(B_f(f^jx,\eps,m)\bigr)
 \nonumber\\
 &\le e^{b_i(x,\eps)}
 \widetilde\mu\bigl(B_F(F^j\ell(x),\eps_+,m)\bigr).
 \label{eq:seq-backward-comparison}
\end{align}
If the $b_i(x,\eps)$ are uniformly bounded in $x$ and $i$ for fixed
$\eps$, we call the comparison \emph{uniformly bounded}.
\end{definition}

\begin{remark}[Sequential synchronization along fibers]
A prescribed lift is one convenient way to make the backward Gibbs times a
well-defined function of the base point. Alternatively, one may replace it by
a \emph{fiberwise synchronization hypothesis}: on a full set,
$\pi(\hat x)=\pi(\hat y)$ implies that the chosen admissible Gibbs-time
sequences coincide. Without one of these devices, contraction alone does not
force Gibbs-time sequences chosen upstairs to descend canonically to the
quotient.
\end{remark}

\begin{theorem}[Forward blockwise sequential weak Gibbs transfer]
\label{thm:sequential-weak-forward}
Assume
\[
 \varphi=\psi\circ\pi+u-u\circ F,
\]
and let $\widetilde\mu\leftrightarrow\mu$. Suppose $\mu$ is blockwise Bowen
sequential weak Gibbs for $(f,\psi)$ with sequential pressure $P$ and a fixed
admissible sequence $n_i(x)$ on a full set $G_f$. If forward sequential
Bowen-mass comparability holds, then $\widetilde\mu$ is blockwise Bowen
sequential weak Gibbs for $(F,\varphi)$ with the same sequential pressure $P$.
For $\widetilde\mu$-a.e. $\hat x$, the prescribed sequence
$n_i(\pi\hat x)$ remains admissible upstairs.
\end{theorem}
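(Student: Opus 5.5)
The plan mirrors the proof of Theorem~\ref{thm:weak-gibbs-forward}, now done blockwise and along suffixes. First, apply the coboundary identity \eqref{eq:birkhoff-coboundary} at the shifted point $F^j\hat x$ with length $m=n_i(x)-j$. Semiconjugacy gives $\pi F^j\hat x=f^jx$, so
\[
S_m\varphi(F^j\hat x)=S_m\psi(f^jx)+u(F^j\hat x)-u(F^{n_i(x)}\hat x).
\]
Hence $|S_m\varphi(F^j\hat x)-S_m\psi(f^jx)|\le 2\|u\|_\infty$, uniformly in $\hat x$, $i$ and $j$. The same number $P$ is used on both sides. No identification of $P$ with $P_F(\varphi)$ or $P_f(\psi)$ is needed, because the term $mP$ is common to both exponents.

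Next, take the set $G_F\subset\pi^{-1}(G_f)$ of full $\widetilde\mu$-measure from Definition~\ref{def:sequential-forward-comparison}. For $\hat x\in G_F$, declare the sequence upstairs to be $n_i(\hat x):=n_i(\pi\hat x)$. This sequence is increasing and does not depend on the radius, as the definition requires. Fix a small $\eps$ and take the radii $\eps_\pm(\eps)$ from the comparison. Apply the blockwise weak Gibbs bounds for $\mu$ at the centre $f^jx$ with radii $\eps_+$ and $\eps_-$; these come with constants $K_i(x,\eps_\pm)$. Chain them with \eqref{eq:seq-forward-comparison} and the Birkhoff estimate above. This gives, for every $0\le j<n_i(x)$,
\[
\widetilde K_i(\hat x,\eps)^{-1}\le\frac{\widetilde\mu(B_F(F^j\hat x,\eps,m))}{\exp(S_m\varphi(F^j\hat x)-mP)}\le\widetilde K_i(\hat x,\eps),
\]
where
\[
\widetilde K_i(\hat x,\eps)=e^{a_i(\hat x,\eps)+2\|u\|_\infty}\max\{K_i(x,\eps_-),K_i(x,\eps_+)\}.
\]

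Finally, check subexponentiality relative to the block length. We have $a_i(\hat x,\eps)/n_i(x)\to0$ by \eqref{eq:seq-mass-error-subexp}, and $2\|u\|_\infty/n_i(x)\to0$ because $n_i(x)\to\infty$. Each $K_i(x,\eps_\pm)$ satisfies \eqref{eq:sequential-weak-subexp}, because $\eps_\pm$ are themselves sufficiently small fixed radii. This is the point of Lemma~\ref{lem:radius-reparametrization}, now applied blockwise. The maximum of two such sequences is again $o(n_i(x))$. Since the normalization is by the full block length $n_i(x)$ rather than by the suffix length $m$, the bounded coboundary error causes no difficulty even for short suffixes.

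The main obstacle is bookkeeping rather than analysis. The radii $\eps_\pm$ must be small enough for the quotient weak Gibbs property to apply. The set $G_F$ must carry both the comparison and lie over $G_f$, and Definition~\ref{def:sequential-forward-comparison} guarantees both. The constants must be allowed to depend on $(x,i)$, which the blockwise definition permits. Once these are checked, admissibility of the prescribed sequence upstairs follows, and the result holds for $\widetilde\mu$-a.e.\ $\hat x$.
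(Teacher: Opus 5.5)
Your proposal is correct and follows essentially the same route as the paper's proof. Both arguments use the suffix coboundary identity with error at most $2\|u\|_\infty$, then chain the quotient blockwise estimates at the reparametrized radii $\eps_\pm$ with the forward sequential comparison. Both arrive at the upstairs constant $e^{a_i(\hat x,\eps)+2\|u\|_\infty}\max\{K_i(x,\eps_-),K_i(x,\eps_+)\}$, whose logarithm is $o(n_i(x))$, and both use the same $P$ on each side without identifying it with topological pressure.
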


\begin{proof}
Fix $\hat x\in G_F$, put $x=\pi\hat x$, choose a Gibbs time
$n_i=n_i(x)$, and let $0\le j<n_i$. Put $m=n_i-j$. The cohomological identity
gives
\begin{equation}
 S_m\varphi(F^j\hat x)
 =S_m\psi(f^jx)+u(F^j\hat x)-u(F^{n_i}\hat x),
 \label{eq:seq-birkhoff-coboundary}
\end{equation}
so
\begin{equation}
 \left|S_m\varphi(F^j\hat x)-S_m\psi(f^jx)\right|
 \le2\|u\|_\infty.
 \label{eq:seq-birkhoff-error}
\end{equation}
By Definition~\ref{def:bowen-sequential-weak}, the same sequence $n_i(x)$
works simultaneously at the reparametrized radii $\eps_-$ and $\eps_+$.
Combining those estimates with \eqref{eq:seq-forward-comparison} gives a
Gibbs constant upstairs bounded by
\[
 \widetilde K_i(\hat x,\eps)
 \le
 \exp\bigl(a_i(\hat x,\eps)+2\|u\|_\infty\bigr)
 \max\{K_i(x,\eps_-),K_i(x,\eps_+)\}.
\]
The same number $P$ appears in both exponential weights because the
cohomological correction contributes only the bounded endpoint term in
\eqref{eq:seq-birkhoff-coboundary}. Therefore
\[
 \frac1{n_i(x)}\log \widetilde K_i(\hat x,\eps)\longrightarrow0.
\]
The estimate is uniform over all suffixes $0\le j<n_i(x)$, so the prescribed
sequence remains admissible upstairs.
\end{proof}

\begin{theorem}[Backward blockwise sequential weak Gibbs transfer]
\label{thm:sequential-weak-backward}
Assume
\[
 \varphi=\psi\circ\pi+u-u\circ F,
\]
and let $\widetilde\mu\leftrightarrow\mu$. Suppose $\widetilde\mu$ is
blockwise Bowen sequential weak Gibbs for $(F,\varphi)$ with sequential
pressure $P$ on a full set $G_F$, and let $\ell:G_f\to G_F$ be a prescribed
lift as in Definition~\ref{def:sequential-backward-comparison}. If backward
sequential Bowen-mass comparability along $\ell$ holds, then $\mu$ is
blockwise Bowen sequential weak Gibbs for $(f,\psi)$ with the same sequential
pressure $P$, along the admissible sequence
\[
 n_i(x)=n_i(\ell(x)).
\]
\end{theorem}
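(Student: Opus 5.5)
The plan mirrors the proof of Theorem~\ref{thm:sequential-weak-forward}, with the roles of quotient and extension exchanged. Fix $x\in G_f$ and put $\hat x=\ell(x)\in G_F$. By Definition~\ref{def:sequential-backward-comparison}, the Gibbs times downstairs are $n_i(x):=n_i(\hat x)$, and these were chosen upstairs independently of the Bowen radius. Fix a Gibbs index $i$ and a suffix $0\le j<n_i(x)$, and put $m=n_i(x)-j$. Since $\pi\circ F^j=f^j\circ\pi$ and $\pi(\hat x)=x$, the point $F^j\hat x$ lies over $f^jx$. The cohomological identity, applied along the orbit segment of length $m$ starting at $F^j\hat x$, then gives
\[
\bigl|S_m\varphi(F^j\hat x)-S_m\psi(f^jx)\bigr|\le 2\|u\|_\infty,
\]
exactly as in \eqref{eq:seq-birkhoff-error}. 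This bound is uniform in $x$, $i$ and $j$.

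Next, fix a small target radius $\eps$ and take the radii $\eps_\pm(\eps)$ from the backward comparison. Blockwise weak Gibbs for $\widetilde\mu$ holds at every sufficiently small radius, and it holds along the same sequence $n_i(\hat x)$, because that sequence does not depend on the radius. So we may apply it at both $\eps_-$ and $\eps_+$, for the center $F^j\hat x$ and length $m$, with constants $K_i(\hat x,\eps_\pm)$. Feeding the upper Gibbs bound at $\eps_+$ into the right inequality of \eqref{eq:seq-backward-comparison}, and the lower Gibbs bound at $\eps_-$ into the left inequality, we then replace $S_m\varphi(F^j\hat x)$ by $S_m\psi(f^jx)$ at a cost of $e^{\pm2\|u\|_\infty}$. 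The pressure $P$ is unchanged, since it appears only through the common term $-mP$. This yields \eqref{eq:bowen-sequential-gibbs} for $\mu$ at $(f^jx,\eps,m)$ with constant
\[
\widehat K_i(x,\eps)=\exp\bigl(b_i(x,\eps)+2\|u\|_\infty\bigr)\max\{K_i(\hat x,\eps_-),K_i(\hat x,\eps_+)\},
\]
uniformly in $0\le j<n_i(x)$.

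Finally, divide $\log\widehat K_i(x,\eps)$ by $n_i(x)=n_i(\hat x)$. The term $b_i(x,\eps)/n_i(x)$ tends to $0$ by hypothesis, $2\|u\|_\infty/n_i(x)\to0$ because $n_i(x)\to\infty$, and $\log K_i(\hat x,\eps_\pm)/n_i(\hat x)\to0$ by \eqref{eq:sequential-weak-subexp} for $\widetilde\mu$. The radii $\eps_\pm$ are fixed once $\eps$ is fixed, and only finitely many of them are involved, so by the reasoning of Lemma~\ref{lem:radius-reparametrization} the maximum is still $o(n_i)$.

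It remains to check that the Gibbs set downstairs has full measure. Here I would take $G_f$ as given in Definition~\ref{def:sequential-backward-comparison} with $\mu(G_f)=1$. If more care is needed, I would pass to the intersection of $G_f$ with $\pi(G_F)$. That set contains $\pi(G_F)$, which has $\mu$-measure at least $\widetilde\mu(G_F)=1$ (measurable modulo completion, since $\pi$ is continuous on a compact space).

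The main obstacle is bookkeeping rather than analysis. The delicate point is that the Gibbs-time sequence must not depend on $\eps$ or on the radii $\eps_\pm(\eps)$, and the prescribed lift $\ell$ is exactly what makes $n_i(x)$ well defined and radius-independent. I would also make explicit that the normalization is by the full block length $n_i$, not the suffix length $m$, since this is what allows the bounded coboundary error $2\|u\|_\infty$ to be absorbed even on very short suffixes.
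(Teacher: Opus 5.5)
Your proposal is correct and matches the paper's proof step for step. It uses the bounded coboundary error along $F^j\ell(x)$ over $f^jx$, applies the weak Gibbs estimates for $\widetilde\mu$ at $\eps_\pm$ together with \eqref{eq:seq-backward-comparison}, obtains the same constant $\exp\bigl(b_i(x,\eps)+2\|u\|_\infty\bigr)\max\{K_i(\hat x,\eps_-),K_i(\hat x,\eps_+)\}$, and divides by $n_i(x)$. Your closing remark on the Gibbs set is unnecessary and misstated, since $G_f\cap\pi(G_F)$ is a subset of $\pi(G_F)$, not a superset. Nothing is lost, though: $\ell(G_f)\subset G_F$ already gives $G_f\subset\pi(G_F)$, and $\mu(G_f)=1$ is part of Definition~\ref{def:sequential-backward-comparison}.
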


\begin{proof}
Fix $x\in G_f$, write $\hat x=\ell(x)$, and let
$n_i=n_i(\hat x)$. For $0\le j<n_i$ put $m=n_i-j$. The identity
\eqref{eq:seq-birkhoff-coboundary}, applied to $\hat x$, gives the same bounded
cohomological error \eqref{eq:seq-birkhoff-error}. Combining the sequential
weak Gibbs estimates for $\widetilde\mu$ at the radii $\eps_-$ and $\eps_+$
with \eqref{eq:seq-backward-comparison} yields a quotient Gibbs constant
bounded by
\[
 K_i^{\mathrm{base}}(x,\eps)
 \le
 \exp\bigl(b_i(x,\eps)+2\|u\|_\infty\bigr)
 \max\{\widetilde K_i(\hat x,\eps_-),\widetilde K_i(\hat x,\eps_+)\}.
\]
Dividing the logarithm by $n_i(x)$ and using the two subexponential
hypotheses gives the desired blockwise sequential weak Gibbs estimate.
\end{proof}

\begin{corollary}[Strong Bowen sequential Gibbs transfer]
\label{cor:sequential-gibbs-transfer}
Under the hypotheses of Theorem~\ref{thm:sequential-weak-forward}, assume
that the quotient measure is Bowen sequential Gibbs in the sense of
Definition~\ref{def:bowen-sequential-gibbs} and that forward sequential
Bowen-mass comparability is uniformly bounded. Then the lifted measure is
Bowen sequential Gibbs with the same sequential pressure, and the prescribed
admissible Gibbs-time sequence used in the forward comparison remains
admissible upstairs.

Conversely, under the hypotheses of Theorem~\ref{thm:sequential-weak-backward},
if the extension measure is Bowen sequential Gibbs and the backward
comparison is uniformly bounded, then the quotient measure is Bowen
sequential Gibbs along the sequence induced by the prescribed lift.
\end{corollary}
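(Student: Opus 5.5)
The plan is to repeat the proofs of Theorems~\ref{thm:sequential-weak-forward} and~\ref{thm:sequential-weak-backward}, tracking constants explicitly. The aim is to show that every error term is bounded uniformly in the center and in the block index, not merely subexponential.

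\emph{Forward direction.} Fix $\hat x\in G_F$ and put $x=\pi\hat x$. Fix an admissible time $n_i=n_i(x)$, a suffix index $0\le j<n_i$, and set $m=n_i-j$.
\begin{enumerate}[label=\textnormal{(\alph*)}]
\item The cohomological identity \eqref{eq:seq-birkhoff-coboundary} gives the endpoint error \eqref{eq:seq-birkhoff-error}. This error is bounded by $2\|u\|_\infty$, and $\|u\|_\infty$ is controlled by \eqref{eq:u-sup-bound}.
\item By hypothesis the quotient measure satisfies \eqref{eq:bowen-sequential-gibbs} for the same sequence $n_i(x)$. This holds at every small radius, in particular at $\eps_-(\eps)$ and $\eps_+(\eps)$, with constants $K(\eps_\pm)$ independent of $x,i,j$.
\item By uniformly bounded forward comparability, $a_i(\hat x,\eps)\le A(\eps)$ for all $\hat x,i$.
\item Chaining \eqref{eq:seq-forward-comparison} with these Gibbs bounds gives the upper estimate
\[
\widetilde\mu(B_F(F^j\hat x,\eps,m))\le e^{A(\eps)+2\|u\|_\infty}K(\eps_+)\exp\bigl(S_m\varphi(F^j\hat x)-mP\bigr).
\]
The lower estimate follows the same way with $K(\eps_-)^{-1}$.
\end{enumerate}
Consequently $\widetilde K(\eps)=e^{A(\eps)+2\|u\|_\infty}\max\{K(\eps_-),K(\eps_+)\}$ is a uniform constant, and the same sequence $n_i(\pi\hat x)$ and the same $P$ work upstairs.

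\emph{Backward direction.} Set $\hat x=\ell(x)$ and apply \eqref{eq:seq-birkhoff-coboundary} at $\hat x$. Use the extension Gibbs bounds at radii $\eps_\pm$ and the backward comparison \eqref{eq:seq-backward-comparison} with $b_i(x,\eps)\le B(\eps)$. This yields a base constant $e^{B(\eps)+2\|u\|_\infty}\max\{\widetilde K(\eps_-),\widetilde K(\eps_+)\}$ along the induced sequence $n_i(\ell(x))$.

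\emph{Remaining checks.} Definition~\ref{def:bowen-sequential-gibbs} also requires the Gibbs set to have full measure.
\begin{enumerate}[label=\textnormal{(\alph*)}]
\item Forward: $G_F$ is full for $\widetilde\mu$ by Definition~\ref{def:sequential-forward-comparison}.
\item Backward: $G_f$ is full for $\mu$ by hypothesis.
\item In both directions the sequences are chosen independently of the radius, since they are inherited from a radius-independent sequence.
\end{enumerate}

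\emph{Main obstacle.} The delicate point is bookkeeping for the radius reparametrization. Definition~\ref{def:bowen-sequential-gibbs} asks for a constant $K(\eps)$ at every small $\eps$. The comparison supplies radii $\eps_\pm(\eps)\to0$, so for small $\eps$ these radii lie in the range where the given Gibbs property holds. Since only the two radii $\eps_\pm$ appear for each $\eps$, this is the uniform analogue of Lemma~\ref{lem:radius-reparametrization}, and no $o(n)$ argument is needed.
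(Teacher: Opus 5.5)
Your proposal is correct and follows the paper's own proof. Like the paper, you rerun the forward and backward sequential weak Gibbs arguments with the uniform bounds $A(\eps)$, $B(\eps)$ in place of the $o(n_i)$ errors, and obtain the constant $e^{A(\eps)+2\|u\|_\infty}\max\{K(\eps_-),K(\eps_+)\}$ (and its backward analogue), which is independent of the center, the block index and the suffix. Your additional checks (full-measure Gibbs sets, radius-independence of the inherited sequence, and $\eps_\pm(\eps)$ eventually lying in the admissible radius range) just make explicit what the paper leaves implicit. One small correction: only boundedness of $u$ is needed, and the bound \eqref{eq:u-sup-bound} applies only to the canonical transfer function, so citing it is unnecessary.
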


\begin{proof}
In the forward direction, for each fixed radius the proof of
Theorem~\ref{thm:sequential-weak-forward} gives a Gibbs constant bounded by
\[
 e^{A(\eps)+2\|u\|_\infty}
 \max\{K(\eps_-),K(\eps_+)\},
\]
where $A(\eps)$ is a uniform bound for the logarithmic mass error. This
constant is independent of the Gibbs index and the suffix. The backward
statement follows identically from
Theorem~\ref{thm:sequential-weak-backward}.
\end{proof}

\begin{remark}[Why subexponential and bounded comparison differ]
An $o(n_i)$ logarithmic mass error is sufficient for the blockwise sequential
weak Gibbs transfer, but it does not in general preserve a fixed Gibbs
constant. Uniformly bounded mass distortion is therefore the natural
hypothesis for preserving the strong Bowen sequential Gibbs property.
\end{remark}

\begin{remark}[Relation with factor sequential Gibbs measures]
\label{rem:factor-sequential-gibbs}
Ferreira and Oliveira \cite{FerreiraOliveira} prove, for regular one-block
factor maps between one-sided symbolic systems, that the image of a sequential
Gibbs measure is sequential Gibbs and that the maximal Gibbs-time sequence is
preserved in their cylinder framework. Their result extends the classical
factor-Gibbs problem studied, among others, in
\cite{KemptonPollicott,ChazottesUgalde,KemptonSFT,Piraino}. Their result is a projection theorem
and constructs a potential on the symbolic factor. The mechanism above is
complementary: for a fiber-contracting extension it gives a lifting theorem
once the potential has been canonically reduced to the quotient and the
required Bowen-mass comparison is available.

Our Bowen formulation should not be identified with the cylinder formulation
without an additional symbolic comparison lemma. In particular, for two-sided
shifts a small Bowen ball is a bilateral cylinder whose time window is enlarged
by a radius-dependent but fixed number of coordinates. Thus the relation with
Ferreira--Oliveira is exact only in symbolic settings where this finite-window
change has uniformly bounded measure distortion along the relevant Gibbs
blocks.
\end{remark}

\begin{lemma}[Cylinder-to-Bowen bridge under bounded finite-window distortion]
\label{lem:cylinder-bowen-bridge}
Let $(\Sigma,\sigma)$ be a symbolic system endowed with a standard symbolic
metric, and let $\mu$ be a probability measure. Suppose that for every
sufficiently small Bowen radius $\eps>0$ there is a constant
$C(\eps)\ge1$ such that, for every relevant orbit block, every $n\ge1$, and
every $0\le j<n$,
\begin{equation}
 C(\eps)^{-1}
 \mu([x_j\ldots x_{n-1}])
 \le
 \mu\bigl(B_\sigma(\sigma^jx,\eps,n-j)\bigr)
 \le
 C(\eps)
 \mu([x_j\ldots x_{n-1}]).
 \label{eq:cylinder-bowen-suffix-comparison}
\end{equation}
Here $[x_j\ldots x_{n-1}]$ denotes the suffix cylinder used in the symbolic
sequential Gibbs definition. In a two-sided shift, condition
\eqref{eq:cylinder-bowen-suffix-comparison} is, for instance, implied when a
Bowen ball is the corresponding radius-dependent bilateral enlargement of
that suffix cylinder and the measure of this enlarged cylinder is uniformly
comparable, by a factor depending only on $\eps$, with the measure of the
central suffix cylinder.

If the cylinder estimates defining sequential Gibbs hold along a prescribed
admissible sequence $n_i(x)$, then the same sequence is admissible for the
Bowen sequential Gibbs definition, with Gibbs constants multiplied by at most
$C(\eps)$.
\end{lemma}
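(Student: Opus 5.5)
The argument is a direct chaining of inequalities. We fix the admissible sequence and never change it. We need to recall the symbolic sequential Gibbs definition. Along the prescribed sequence $n_i(x)$ there are a constant $K\ge1$ and a pressure $P$ such that, for every relevant $x$, every $i\ge1$ and every $0\le j<n_i(x)$,
\[
K^{-1}\le\frac{\mu([x_j\ldots x_{n_i(x)-1}])}{\exp\bigl(S_{n_i(x)-j}\phi(\sigma^jx)-(n_i(x)-j)P\bigr)}\le K .
\]
We then fix a sufficiently small Bowen radius $\eps>0$, take $x$ in the Gibbs set, and fix an index $i$ and a suffix position $0\le j<n_i(x)$. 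Set $n=n_i(x)$ and $m=n-j$.

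Next we apply \eqref{eq:cylinder-bowen-suffix-comparison} with this $n$ and $j$. Here it matters that the comparison is assumed for every relevant orbit block, every $n\ge1$ and every $0\le j<n$, so in particular it holds at the Gibbs times $n=n_i(x)$. This gives
\[
C(\eps)^{-1}\mu([x_j\ldots x_{n-1}])\le\mu\bigl(B_\sigma(\sigma^jx,\eps,m)\bigr)\le C(\eps)\mu([x_j\ldots x_{n-1}]).
\]
Substituting the cylinder Gibbs bounds for the suffix cylinder yields
\[
(KC(\eps))^{-1}\le\frac{\mu\bigl(B_\sigma(\sigma^jx,\eps,m)\bigr)}{\exp\bigl(S_m\phi(\sigma^jx)-mP\bigr)}\le KC(\eps).
\]
This is exactly \eqref{eq:bowen-sequential-gibbs} with $K(\eps):=KC(\eps)$. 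The constant depends only on $\eps$; it does not depend on $x$, $i$ or $j$. The sequence $n_i(x)$ was fixed before $\eps$ was chosen, so it is independent of the radius, as Definition~\ref{def:bowen-sequential-gibbs} requires. The Gibbs set and the pressure $P$ are carried over unchanged. Hence the same sequence is an admissible Bowen Gibbs-time sequence.

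The only delicate point is bookkeeping. We must make sure that the suffix cylinder in \eqref{eq:cylinder-bowen-suffix-comparison} is the same object as the one in the symbolic Gibbs estimate, and that the Birkhoff sum and normalization use the suffix length $m=n_i(x)-j$ on both sides. Since the hypothesis is phrased exactly with that suffix cylinder, no genuine obstacle arises. The two-sided remark in the statement is only a sufficient condition for \eqref{eq:cylinder-bowen-suffix-comparison}, and the proof does not need it.
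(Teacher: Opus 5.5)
Your proposal is correct and is essentially the paper's proof. Both arguments fix a Gibbs block $n=n_i(x)$ and a suffix $j$, apply the cylinder Gibbs estimate to $[x_j\ldots x_{n-1}]$, and then use \eqref{eq:cylinder-bowen-suffix-comparison} at that $n$, which gives a Bowen constant $KC(\eps)$ independent of $x$, $i$ and $j$; your checks that the sequence is fixed before the radius and that the suffix length $m=n_i(x)-j$ is used consistently spell out the same bookkeeping in more detail.
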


\begin{proof}
Fix a Gibbs block $n_i(x)$ and a suffix $0\le j<n_i(x)$. Apply the symbolic
cylinder Gibbs estimate to $[x_j\ldots x_{n_i(x)-1}]$ and then use
\eqref{eq:cylinder-bowen-suffix-comparison} with $n=n_i(x)$. The resulting
upper and lower Bowen-ball estimates acquire at most the multiplicative factor
$C(\eps)$. Since this factor is independent of the Gibbs index $i$ and of the
suffix $j$, the same prescribed sequence remains admissible in the Bowen
formulation.
\end{proof}

\section{Explicit Examples and Applications}
\label{sec:examples}

The preceding results are abstract by design. This section verifies the
hypotheses and computes the reduction explicitly in concrete models. The
first example is the basic test case for the whole framework: the potential
depends genuinely on the contracting coordinate, yet the transfer function
and quotient potential can be written in closed form. The second example
shows how the same computation interacts with Gibbs measures. We then
indicate which conclusions can be drawn for solenoids and partially
hyperbolic attractors only conditionally.

\subsection{Affine contracting skew-products: an explicit reduction}

Let $M=\mathbb S^1=\mathbb R/\mathbb Z$ and let
\[
f(x)=dx\pmod 1,\qquad d\ge2.
\]
Fix $R>0$, $|a|<1$, and a Lipschitz function
$\rho:\mathbb S^1\to\mathbb R$ satisfying
\[
\|\rho\|_\infty\le (1-|a|)R.
\]
Then
\[
F:\mathbb S^1\times[-R,R]\longrightarrow
\mathbb S^1\times[-R,R],
\qquad
F(x,y)=\bigl(f(x),ay+\rho(x)\bigr)
\]
is a well-defined continuous skew-product. We take
\[
\pi(x,y)=x.
\]
The fibers are vertical intervals and
\[
|y_n-y_n'|=|a|^n|y-y'|
\]
whenever the initial points have the same base coordinate. Thus Assumption
\ref{ass:selector} holds with the continuous section
\[
s(x)=(x,0),
\qquad
r(x,y)=(x,0).
\]

The next proposition computes the reduction for a class of potentials that
is simple enough to be explicit but is not constant along the contracting
fibers.

\begin{proposition}[Explicit affine reduction]
\label{prop:affine-explicit}
Let
\[
F(x,y)=\bigl(f(x),ay+\rho(x)\bigr),\qquad |a|<1,
\]
as above, and let
\[
\varphi(x,y)=g(x)+by,
\]
where $g:\mathbb S^1\to\mathbb R$ is continuous and $b\in\mathbb R$.
Then the stable-difference series associated with the selector $s(x)=(x,0)$
converges explicitly and yields
\[
\boxed{u(x,y)=\frac{b}{1-a}\,y}.
\]
If $g$ is H\"older, this is precisely the transfer function furnished by
Theorem~\ref{thm:reduction}, and the reduced potential is
\[
\boxed{\psi(x)=g(x)+\frac{b}{1-a}\rho(x).}
\]
Consequently
\[
\boxed{
g(x)+by
=
g(x)+\frac{b}{1-a}\rho(x)
+\frac{b}{1-a}y
-\frac{b}{1-a}\bigl(ay+\rho(x)\bigr).
}
\]
\end{proposition}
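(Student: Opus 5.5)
The plan is to evaluate the stable-difference series \eqref{eq:u-series-main} directly in closed form, with the selector $r(x,y)=(x,0)$. Then we identify $\psi$ by computing $\bar\varphi=\varphi-u+u\circ F$.

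First, the iterates. An induction on $j$ gives
\[
F^j(x,y)=\Bigl(f^jx,\;a^jy+\sum_{k=0}^{j-1}a^{j-1-k}\rho(f^kx)\Bigr).
\]
Hence $F^j(x,y)$ and $F^j(x,0)$ have the same base coordinate $f^jx$. Their fiber coordinates differ by exactly $a^jy$, since the $\rho$-contributions are identical. The $g$-terms therefore cancel in each defect, and
\[
\Delta_j(x,y)=\varphi(F^j(x,y))-\varphi(F^j(x,0))=b\,a^jy.
\]
Summing the geometric series, which converges absolutely because $|a|<1$, gives $u(x,y)=\frac{b}{1-a}y$. This computation uses no regularity of $g$, so it holds for every continuous $g$.

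When $g$ is H\"older, $\varphi$ is H\"older on the cylinder: $y\mapsto by$ is Lipschitz there, and we use the product metric. Fiber contraction holds with $\lambda=|a|$. This can be checked on the product metric, since vertical distances are multiplied by $|a|$; if $a=0$, replace $\lambda$ by any number in $(0,1)$. The selector assumption was verified just before the proposition. So Theorem~\ref{thm:reduction} applies, and its canonical $u$ is the series just computed.

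Next, compute $\bar\varphi$:
\[
\bar\varphi(x,y)=g(x)+by-\frac{b}{1-a}y+\frac{b}{1-a}\bigl(ay+\rho(x)\bigr).
\]
The $y$-coefficients combine to $b-\frac{b}{1-a}+\frac{ab}{1-a}=b-b=0$. Therefore $\bar\varphi(x,y)=g(x)+\frac{b}{1-a}\rho(x)$. This is constant on fibers, as Theorem~\ref{thm:reduction} predicts. Uniqueness of $\psi$ then forces $\psi=\bar\varphi\circ s=g+\frac{b}{1-a}\rho$. Rearranging $\varphi=\psi\circ\pi+u-u\circ F$ with these explicit expressions gives the boxed identity, which is also a one-line algebraic check.

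There is no real obstacle here. The only points needing care are that the $\rho$-terms cancel exactly in $\Delta_j$, which the iterate formula guarantees, and that the H\"older hypothesis on $g$ is needed only to invoke Theorem~\ref{thm:reduction} and not for the explicit computation itself.
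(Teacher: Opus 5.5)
Your proposal is correct and follows essentially the same route as the paper: the explicit iterate formula, exact cancellation of the $\rho$-terms giving the defect $ba^jy$, geometric summation to $u=\frac{b}{1-a}y$, and direct computation of $\varphi-u+u\circ F$. The paper leaves implicit the check that Theorem~\ref{thm:reduction} applies, namely H\"older regularity of $\varphi$ on the cylinder and contraction with $\lambda=|a|$ (replaced by any $\lambda\in(0,1)$ when $a=0$); your explicit verification of these hypotheses is a careful addition.
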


\begin{proof}
For points in the same fiber,
\[
F^j(x,y)=\bigl(f^j(x),y_j\bigr),
\]
where
\[
y_j=a^jy+
\sum_{k=0}^{j-1}a^{j-1-k}\rho(f^kx).
\]
For the selected point $(x,0)$ the corresponding fiber coordinate is
\[
y_j^0=
\sum_{k=0}^{j-1}a^{j-1-k}\rho(f^kx).
\]
Hence
\[
\varphi(F^j(x,y))-\varphi(F^j(x,0))
=b(y_j-y_j^0)=ba^jy.
\]
The stable-difference series therefore gives
\[
u(x,y)=\sum_{j=0}^\infty ba^jy
=\frac{b}{1-a}y.
\]
Finally,
\[
\begin{aligned}
\varphi-u+u\circ F
&=g(x)+by-\frac{b}{1-a}y
+\frac{b}{1-a}(ay+\rho(x))\\
&=g(x)+\frac{b}{1-a}\rho(x),
\end{aligned}
\]
which depends only on $x$.
\end{proof}

\begin{remark}[Why this example matters]
The potential $\varphi(x,y)=g(x)+by$ is not stable-constant unless $b=0$.
Nevertheless, its entire dependence on the contracting coordinate is a
coboundary plus an explicit correction
\[
\frac{b}{1-a}\rho(x)
\]
on the quotient. Thus the reduction is not merely a formal statement that
one may ``assume'' stable constancy: it gives an explicit formula for the
new quotient potential.
\end{remark}

\subsubsection{Consequences for ergodic optimization}

For the affine model,
\[
m(\varphi,F)
=
\min_{\mu\in\M_f(\mathbb S^1)}
\int_{\mathbb S^1}
\left(g+\frac{b}{1-a}\rho\right)\,d\mu.
\]
Thus a minimization problem for the genuinely two-variable observable
$g(x)+by$ is exactly the one-dimensional optimization problem for
\[
\psi=g+\frac{b}{1-a}\rho.
\]
Moreover, minimizing measures correspond bijectively. In particular, if the
quotient potential has a unique minimizing measure $\mu_{\min}$, then the
extension has a unique minimizing measure, namely its unique invariant lift.

For $0\le a<1$, the factor $1/(1-a)$ has the expected memory
interpretation: stronger contraction (smaller $a$) reduces the cumulative
contribution of the fiber coordinate, whereas as $a\to1^-$ that accumulated
contribution becomes large. For negative $a$ the alternating signs change
this monotone interpretation, although the explicit formula remains valid
for every $|a|<1$.

\subsubsection{Consequences for pressure and equilibrium states}

Suppose $g$ and $\rho$ are H\"older. Then
\[
\psi(x)=g(x)+\frac{b}{1-a}\rho(x)
\]
is H\"older. Since $f(x)=dx\pmod1$ is an expanding circle map, the standard
thermodynamic formalism for H\"older potentials gives an equilibrium state
for $\psi$. Theorem~\ref{thm:pressure} then yields
\[
P_F(\varphi)
=
P_f\!\left(g+\frac{b}{1-a}\rho\right),
\]
and the equilibrium state for the original fiber-dependent potential is the
unique invariant lift of the equilibrium state of the quotient potential
whenever uniqueness holds on the base.

Hence the thermodynamic problem for the two-dimensional skew-product has
been reduced exactly to a one-dimensional expanding map.

\subsection{A symbolic skew-product and weak Gibbs transfer}

We next consider a model in which the invariant measure and the Gibbs
correspondence can be seen directly. Let
\[
M=\{0,1\}^{\mathbb Z}
\]
with the left shift $\sigma$. Let $|a|<1$ and let
$\rho:M\to\mathbb R$ be H\"older. On
\[
N=M\times[-R,R]
\]
consider
\[
F(\omega,y)=\bigl(\sigma\omega,ay+\rho(\omega)\bigr),
\]
with $R$ chosen so that $F(N)\subset N$. Again
\[
\pi(\omega,y)=\omega
\]
has the continuous section $s(\omega)=(\omega,0)$ and the fibers are
uniformly contracted.

Let $g:M\to\mathbb R$ be H\"older and, for
\[
\varphi(\omega,y)=g(\omega)+by,
\]
the same computation as in Proposition~\ref{prop:affine-explicit} gives
\[
u(\omega,y)=\frac{b}{1-a}y,
\qquad
\psi(\omega)=g(\omega)+\frac{b}{1-a}\rho(\omega).
\]

There is also an invariant graph. Define
\[
h(\omega)
=
\sum_{k=1}^{\infty}a^{k-1}\rho(\sigma^{-k}\omega).
\]
The series converges uniformly, and
\[
h(\sigma\omega)=ah(\omega)+\rho(\omega).
\]
Therefore
\[
F(\omega,h(\omega))
=
(\sigma\omega,h(\sigma\omega)),
\]
so
\[
\Gamma_h=\{(\omega,h(\omega)):\omega\in M\}
\]
is an invariant graph.

Let $\mu$ be an invariant Gibbs measure for the H\"older potential
$\psi$ on the full shift and let
\[
\widetilde\mu=(\omega\mapsto(\omega,h(\omega)))_*\mu.
\]
Then $\widetilde\mu$ is the unique invariant lift of $\mu$ and is supported
on $\Gamma_h$.

\begin{proposition}[Zero-distortion Gibbs transfer on the invariant graph]
\label{prop:symbolic-gibbs-example}
In the symbolic skew-product above, the pair
$(\widetilde\mu,\mu)$ satisfies both forward and backward Bowen-mass
comparability with zero logarithmic mass error (after a change of Bowen-ball
radii). Consequently, if $\mu$ is Gibbs
for $(\sigma,\psi)$, then $\widetilde\mu$ is weak Gibbs for $(F,\varphi)$.
Conversely, weak Gibbs for $(F,\varphi)$ on the invariant graph implies weak
Gibbs for $(\sigma,\psi)$.
\end{proposition}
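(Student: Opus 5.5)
The plan is to reduce the statement to Proposition~\ref{prop:invariant-graph-bowen}, applied with the invariant section $s_h(\omega)=(\omega,h(\omega))$ instead of the zero section, and then to invoke Corollary~\ref{thm:weak-gibbs}.

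\textbf{Step 1: $s_h$ is a continuous invariant section.} Each term $a^{k-1}\rho(\sigma^{-k}\omega)$ is continuous, and the series defining $h$ converges uniformly by the Weierstrass test, since $|a^{k-1}\rho(\sigma^{-k}\omega)|\le |a|^{k-1}\|\rho\|_\infty$. Hence $h$ is continuous. Shifting the summation index gives
\[
h(\sigma\omega)=\rho(\omega)+a\sum_{k\ge1}a^{k-1}\rho(\sigma^{-k}\omega)=ah(\omega)+\rho(\omega),
\]
which is exactly $F\circ s_h=s_h\circ\sigma$. We also need $s_h$ to take values in $N$. This holds because $|h|\le\|\rho\|_\infty/(1-|a|)$, and $R$ may be taken at least this large; in fact this is what $F(N)\subset N$ with the natural choice of $R$ gives. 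Clearly $\pi\circ s_h=\mathrm{id}$.

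\textbf{Step 2: zero-error Bowen-mass comparability.} Proposition~\ref{prop:invariant-graph-bowen} now applies to $s_h$. It identifies the unique invariant lift as $(s_h)_*\mu=\widetilde\mu$, consistent with Theorem~\ref{thm:lift-bijection}. It also yields forward and backward comparability on $(G_F,G_f)=(\Gamma_h,M)$ with $a_n^\pm\equiv0$ after reparametrizing the radii through the uniform-continuity moduli of $s_h$ and $\pi|_{\Gamma_h}$. This is the first assertion of the proposition.

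\textbf{Step 3: Gibbs transfer.} A Gibbs measure for the H\"older potential $\psi$ on the full shift is weak Gibbs on $G_f=M$ with bounded constants. Here there is one point to check, which I expect to be the only real obstacle: the Gibbs property is usually stated for cylinders, while the paper uses Bowen balls. On the two-sided shift, a Bowen ball $B_\sigma(\omega,\eps,n)$ is a bilateral cylinder fixing coordinates $-k(\eps),\dots,n-1+k(\eps)$. Since $\psi$ is H\"older, its Birkhoff sums have bounded distortion, and the Gibbs bounds for this enlarged cylinder differ from those for $[\omega_0\ldots\omega_{n-1}]$ by at most a factor $e^{2k(\eps)(\|\psi\|_\infty+|P|)}$ times the bounded-distortion constant. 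This factor is independent of $n$ and $\omega$, so $\mu$ is (uniform) weak Gibbs on $M$ in the Bowen sense.

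With this in hand, Theorem~\ref{thm:weak-gibbs-forward}, using the pressure identity of Theorem~\ref{thm:pressure} and the bound $\|u\|_\infty\le |b|R/(1-|a|)$ for $u=\frac{b}{1-a}y$, shows that $\widetilde\mu$ is weak Gibbs for $(F,\varphi)$ on $\Gamma_h$. For the converse, each $\omega\in M$ has the prescribed lift $s_h(\omega)\in\Gamma_h$. Theorem~\ref{thm:weak-gibbs-backward} then turns weak Gibbs for $(F,\varphi)$ on $\Gamma_h$ into weak Gibbs for $(\sigma,\psi)$ on $M$; equivalently, one applies Corollary~\ref{thm:weak-gibbs}.
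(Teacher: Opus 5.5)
Your proposal is correct and follows essentially the paper's route. The paper's proof inlines the argument of Proposition~\ref{prop:invariant-graph-bowen} for the graph map $\iota(\omega)=(\omega,h(\omega))$ (conjugacy, change of radius via uniform continuity of $\iota$ and $\iota^{-1}$, zero mass distortion from $\widetilde\mu=\iota_*\mu$) and then applies Theorems~\ref{thm:weak-gibbs-forward} and~\ref{thm:weak-gibbs-backward}; your extra checks (continuity of $h$, $\Gamma_h\subset N$, and converting the cylinder Gibbs property into Bowen-ball estimates with an $n$-independent factor) are details the paper leaves implicit, and you handle them correctly.
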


\begin{proof}
The graph map
\[
\iota(\omega)=(\omega,h(\omega))
\]
is a homeomorphism from $M$ onto $\Gamma_h$ and conjugates $\sigma$ with
$F|_{\Gamma_h}$. Hence Bowen balls on the graph are carried into Bowen balls
for the shift, with only a change of radius determined by the uniform
continuity of $\iota$ and $\iota^{-1}$. Since
$\widetilde\mu=\iota_*\mu$, no multiplicative measure distortion occurs.
Thus the logarithmic mass errors can be taken equal to zero; only the Bowen-ball radii are reparametrized.
The weak Gibbs transfer follows from Theorems
\ref{thm:weak-gibbs-forward} and \ref{thm:weak-gibbs-backward}.
\end{proof}

\begin{corollary}[Bowen sequential Gibbs on the invariant graph]
\label{cor:symbolic-sequential-gibbs}
In the symbolic skew-product above, if $\mu$ is Bowen sequential Gibbs for
$(\sigma,\psi)$ along an admissible sequence $n_i(\omega)$, then
$\widetilde\mu$ is Bowen sequential Gibbs for $(F,\varphi)$ and the same
prescribed sequence remains admissible on the invariant graph. Conversely,
Bowen sequential Gibbs on the invariant graph transfers to the base along the
sequence induced by the graph inverse.
\end{corollary}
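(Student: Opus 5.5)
The plan is to obtain Corollary~\ref{cor:symbolic-sequential-gibbs} from Corollary~\ref{cor:sequential-gibbs-transfer}. The work is to check that the invariant-graph structure gives \emph{uniformly bounded} sequential Bowen-mass comparability, in fact with zero logarithmic error, in both directions.

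We work with the graph map $\iota(\omega)=(\omega,h(\omega))$. It is a homeomorphism of $M$ onto $\Gamma_h$ satisfying $F\circ\iota=\iota\circ\sigma$, and $\widetilde\mu=\iota_*\mu$ is the unique invariant lift by Proposition~\ref{prop:invariant-graph-bowen}. The reduction $\varphi=\psi\circ\pi+u-u\circ F$ with $u(\omega,y)=\frac{b}{1-a}y$ is already available, so the cohomological hypothesis of Theorems~\ref{thm:sequential-weak-forward} and~\ref{thm:sequential-weak-backward} holds.

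\textbf{Forward direction.} Take $G_F=\iota(G_f)$. This has full $\widetilde\mu$-measure since $\widetilde\mu=\iota_*\mu$, and it lies in $\pi^{-1}(G_f)$. Fix $\eps>0$. Uniform continuity of $\iota$ gives $\delta_-(\eps)$, and uniform continuity of $\pi|_{\Gamma_h}=\iota^{-1}$ gives $\delta_+(\eps)$. Both tend to $0$ with $\eps$, and by the argument of Proposition~\ref{prop:invariant-graph-bowen} they satisfy, for every $z\in M$ and every $m\ge1$,
\[
\iota\bigl(B_\sigma(z,\delta_-,m)\bigr)\subset B_F(\iota z,\eps,m)\cap\Gamma_h\subset\iota\bigl(B_\sigma(z,\delta_+,m)\bigr).
\]
The key point is that these inclusions hold for \emph{every} center and length. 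So we may apply them with $z=\sigma^j\omega$ and $m=n_i(\omega)-j$, using $F^j\iota(\omega)=\iota(\sigma^j\omega)$. Because $\widetilde\mu$ is carried by $\Gamma_h$, taking measures gives \eqref{eq:seq-forward-comparison} with $a_i\equiv0$. This is uniformly bounded comparison, and Corollary~\ref{cor:sequential-gibbs-transfer} then yields the forward statement with the same admissible sequence $n_i(\pi\hat x)$.

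\textbf{Converse direction.} Use the prescribed lift $\ell=\iota|_{G_f}$, where $G_f=\pi(G_F\cap\Gamma_h)$. This set has full $\mu$-measure because $\widetilde\mu(\Gamma_h)=1$, and $\ell$ is continuous, hence measurable. Set $n_i(\omega)=n_i(\iota\omega)$. For a target base radius $\eta$, we reparametrize radii using the monotone and cofinal convention from the concrete-criteria subsection. Choose $\tau_-(\eta)$ with $\delta_+(\tau_-)\le\eta$ and $\tau_+(\eta)$ with $\eta\le\delta_-(\tau_+)$. The same inclusions, applied at suffixes, then give \eqref{eq:seq-backward-comparison} with $b_i\equiv0$, and the converse half of Corollary~\ref{cor:sequential-gibbs-transfer} applies.

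\textbf{Main obstacle.} The expected difficulty is the bookkeeping of radii. In the backward direction the radii $\eps_\pm$ must tend to zero with the target radius, so $\tau_\pm(\eta)$ must be chosen monotone with $\tau_\pm(\eta)\to0$. This is possible because the moduli of continuity of $\iota$ and $\iota^{-1}$ can be taken monotone. One must also note that Definition~\ref{def:bowen-sequential-gibbs} fixes the sequence independently of the radius. Hence the reparametrized radii $\eps_\pm$ inherit the Gibbs bounds with constants $K(\eps_\pm)$, and the resulting constant $\max\{K(\eps_-),K(\eps_+)\}e^{2\|u\|_\infty}$ is uniform in $i$ and $j$.
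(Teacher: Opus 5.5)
Your proposal is correct and follows the paper's route. The graph conjugacy $\iota$ gives zero-distortion, hence uniformly bounded, forward and backward sequential Bowen-mass comparisons, and Corollary~\ref{cor:sequential-gibbs-transfer} then yields both directions. Your explicit treatment of the suffix centers $F^j\iota(\omega)=\iota(\sigma^j\omega)$, the Gibbs sets $\iota(G_f)$ and $\pi(G_F\cap\Gamma_h)$, and the radius reparametrization spells out what the paper's two-line proof delegates to Proposition~\ref{prop:symbolic-gibbs-example}.
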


\begin{proof}
The graph conjugacy has zero logarithmic mass distortion, as proved in
Proposition~\ref{prop:symbolic-gibbs-example}; hence the associated forward
and backward sequential Bowen-mass comparisons are uniformly bounded. Apply
Corollary~\ref{cor:sequential-gibbs-transfer}.
\end{proof}

\begin{remark}[Connection with Ferreira--Oliveira in the symbolic example]
If, in addition, a cylinder sequential Gibbs measure in the sense of
Ferreira--Oliveira satisfies the bounded finite-window distortion hypothesis of
Lemma~\ref{lem:cylinder-bowen-bridge}, then its prescribed cylinder Gibbs-time
sequence is also admissible for the Bowen formulation and the preceding
corollary applies. This extra hypothesis is automatic in several classical
Gibbs/quasi-Bernoulli symbolic settings, but it is not a formal consequence of
sequential Gibbs alone.
\end{remark}

\begin{remark}
This example is useful because it verifies the extra hypothesis required
for weak Gibbs transfer rather than assuming it abstractly. Zero relative
entropy alone would not provide this conclusion. Here the invariant graph
identifies the lifted dynamics with the quotient dynamics on the support of
the lifted measure, so the Bowen-ball comparison is exact up to the harmless
change of metric radii.
\end{remark}

\subsection{A direct subcohomological application}

The affine model also provides a concrete illustration of the critical
subcohomological transfer. Suppose the quotient map $f$ belongs to a class
for which property $\mathrm{SC}$ is known, and assume
\[
m\!\left(g+\frac{b}{1-a}\rho,f\right)\ge0.
\]
Then
\[
g+\frac{b}{1-a}\rho\ge v-v\circ f
\]
for some continuous $v$. Proposition~\ref{prop:affine-explicit} gives
\[
g(x)+by
\ge
\left(
\frac{b}{1-a}y+v(x)
\right)
-
\left(
\frac{b}{1-a}(ay+\rho(x))+v(fx)
\right).
\]
Thus the subaction on the original two-dimensional system is explicitly
\[
U(x,y)=\frac{b}{1-a}y+v(x).
\]
This is a particularly transparent instance of the transfer principle:
the difficult subcohomological statement is proved on the quotient, while
the stable coordinate is restored by an explicit coboundary.

If the quotient is a zooming local homeomorphism satisfying the hypotheses
of Mbarki--Santana, the same argument applies to their subcohomological
theorem. The resulting statement is genuinely about the original
fiber-dependent potential, not merely about a potential assumed to be
constant on stable fibers.

\subsection{A zooming/non-uniformly expanding application}\label{subsec:zooming-application}

We now turn the affine model into a genuine application to non-uniformly
expanding quotient dynamics. Let $f:M\to M$ be a continuous zooming local
homeomorphism in a class for which the subcohomological theorem of Mbarki--
Santana applies, and assume that the zooming set is dense in $M$. Let
$\rho,g:M\to\mathbb R$ be H\"older continuous and choose $0<|a|<1$ and
$R>0$ so that
\[
\frac{\|\rho\|_\infty}{1-|a|}\le R.
\]
On $N=M\times[-R,R]$ consider
\[
F(x,y)=\bigl(f(x),ay+\rho(x)\bigr),
\qquad \pi(x,y)=x.
\]
Then $F(N)\subset N$, the fibers are uniformly contracted by $|a|$, and
$s(x)=(x,0)$ is a continuous global section. For the genuinely fiber-dependent
potential
\[
\varphi(x,y)=g(x)+by,
\]
Proposition~\ref{prop:affine-explicit} gives
\[
 u(x,y)=\frac{b}{1-a}y,
 \qquad
 \psi(x)=g(x)+\frac{b}{1-a}\rho(x).
\]
Since $g$ and $\rho$ are H\"older, the reduced potential $\psi$ has the
same H\"older regularity. Therefore every hypothesis of the quotient
subcohomological theorem is checked directly on the reduced potential.

\begin{theorem}[Zooming extension theorem]
\label{thm:zooming-extension}
Assume that the quotient map $f$ and the induced potential $\psi$ satisfy
all hypotheses of the subcohomological theorem of Mbarki--Santana, including
the precise regularity and dynamical assumptions required in that theorem. If
\[
m(\varphi,F)\ge0,
\]
then there exists a continuous function $U:N\to\mathbb R$ such that
\[
\varphi\ge U-U\circ F.
\]
More explicitly, if
\[
\psi=g+\frac{b}{1-a}\rho
\ge v-v\circ f
\]
for a continuous $v:M\to\mathbb R$, then one may take
\[
\boxed{U(x,y)=\frac{b}{1-a}y+v(x).}
\]
If the quotient theorem gives a H\"older subaction $v$, then $U$ has the
corresponding H\"older regularity.
\end{theorem}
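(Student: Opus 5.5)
The plan is to obtain the theorem as a direct specialization of Theorem~\ref{thm:SC-transfer}, with Proposition~\ref{prop:affine-explicit} supplying the explicit transfer function.

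First I check that the geometric setting of Section~\ref{sec:geom} holds for this model.
\begin{itemize}
\item $F(N)\subset N$, because $|ay+\rho(x)|\le |a|R+\|\rho\|_\infty\le R$.
\item $\pi$ is a continuous surjection, Lipschitz for the product metric, and $\pi\circ F=f\circ\pi$.
\item Uniform fiber contraction holds with $\lambda=|a|$. If $a=0$ the fibers collapse, and any $\lambda\in(0,1)$ works.
\item $s(x)=(x,0)$ is a continuous section, so $r=s\circ\pi$ is the selector of Assumption~\ref{ass:selector}.
\end{itemize}
The potential $\varphi(x,y)=g(x)+by$ is H\"older on $N$. Hence Theorem~\ref{thm:reduction} applies, and Proposition~\ref{prop:affine-explicit} identifies the canonical pair as $u(x,y)=\frac{b}{1-a}y$ and $\psi=g+\frac{b}{1-a}\rho$. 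This $\psi$ is H\"older, being a linear combination of H\"older functions.

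Next I transfer the sign condition to the quotient. By Corollary~\ref{cor:average-correspondence}, which rests on the bijection of Theorem~\ref{thm:lift-bijection}, we have $m(\psi,f)=m(\varphi,F)\ge0$. Since $f$ and $\psi$ are assumed to satisfy all hypotheses of the Mbarki--Santana theorem, the quotient has property $\mathrm{SC}$ for $\psi$. This gives a continuous $v$ with $\psi\ge v-v\circ f$. Theorem~\ref{thm:SC-transfer} then yields $U=u+v\circ\pi$, which is exactly $\frac{b}{1-a}y+v(x)$.

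For transparency, I would also verify the inequality directly. We have $U-U\circ F=(v-v\circ f)\circ\pi+u-u\circ F\le\psi\circ\pi+u-u\circ F=\varphi$, where the last equality is the boxed identity of Proposition~\ref{prop:affine-explicit}. Continuity of $U$ is immediate.

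For the regularity claim, the plan has two steps.
\begin{itemize}
\item The map $u$ is Lipschitz, and $\pi$ is Lipschitz for a product-type metric on $N$.
\item If $v$ is $\alpha$-H\"older, then $v\circ\pi$ is $\alpha$-H\"older, so $U$ is $\min(1,\alpha)=\alpha$-H\"older on the compact set $N$.
\end{itemize}

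I expect no real obstacle here. The only delicate point is bookkeeping. The hypotheses of the quotient theorem must be imposed on the reduced potential $\psi$, not on $\varphi$, which the statement already assumes. We must also make sure the metric on $N$ is one for which $\pi$ is Lipschitz, such as the max or sum product metric; the theorem implicitly takes this.
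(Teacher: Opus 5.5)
Your proposal is correct and follows essentially the same route as the paper. The paper likewise uses the affine reduction, gets $m(\psi,f)=m(\varphi,F)\ge0$ from the invariant-measure correspondence, takes the Mbarki--Santana subaction $v$, and pulls back by $\pi$ and adds the coboundary $u-u\circ F$ to obtain $U=u+v\circ\pi$; your extra checks of the geometric setting and of H\"older regularity fill in what the paper leaves implicit.
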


\begin{proof}
By the affine reduction,
\[
\varphi=\psi\circ\pi+u-u\circ F,
\qquad u(x,y)=\frac{b}{1-a}y.
\]
The invariant-measure correspondence gives
$m(\psi,f)=m(\varphi,F)\ge0$. The Mbarki--Santana theorem therefore yields
$v$ with $\psi\ge v-v\circ f$. Pulling this inequality back by $\pi$ and
adding the coboundary $u-u\circ F$ gives
\[
\varphi\ge (v\circ\pi+u)-(v\circ\pi+u)\circ F.
\]
Thus $U=v\circ\pi+u$, which is the stated formula.
\end{proof}

\begin{corollary}[Zooming Liv\v{s}ic transfer]
\label{cor:zooming-liv-trans}
In the setting of Theorem~\ref{thm:zooming-extension}, suppose in addition
that
\[
\int_M\psi\,d\mu=0
\quad\text{for every }\mu\in\mathcal M_f(M),
\]
and that the periodic points of $f$ are dense. Assume either that all
hypotheses of the Liv\v{s}ic conclusion of Mbarki--Santana
\cite{MbarkiSantana} are satisfied by the induced potential $\psi$, or, more
abstractly, that the quotient has property $\mathrm{SC}$ for both $\psi$ and
$-\psi$. Then
\[
\varphi=U-U\circ F
\]
for a continuous $U:N\to\mathbb R$. In particular, the fiber-dependent
potential has no additional cohomological obstruction beyond the quotient.
\end{corollary}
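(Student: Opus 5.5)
The plan is to reduce the statement to the quotient and then lift back through the explicit affine coboundary. By Proposition~\ref{prop:affine-explicit},
\[
\varphi=\psi\circ\pi+u-u\circ F,\qquad u(x,y)=\frac{b}{1-a}y,\qquad \psi=g+\frac{b}{1-a}\rho.
\]
Here $\psi$ is H\"older because $g$ and $\rho$ are. By Corollary~\ref{cor:average-correspondence}, applied through the bijection of Theorem~\ref{thm:lift-bijection}, the hypothesis $\int\psi\,d\mu=0$ for all $\mu\in\M_f(M)$ is equivalent to $\int\varphi\,d\widetilde\mu=0$ for all $\widetilde\mu\in\M_F(N)$. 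So the two formulations of the null-average hypothesis agree, and I may work entirely on the quotient.

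Next I will produce $v\in C(M)$ with $\psi=v-v\circ f$, and the two alternative hypotheses are handled separately. Under the Mbarki--Santana Liv\v{s}ic alternative, I first check that every periodic orbit of $f$ has zero $\psi$-sum. For a periodic point $p$ of period $n$, the orbit measure $\frac1n\sum_{j<n}\delta_{f^jp}$ is $f$-invariant, so the null-average hypothesis gives $\sum_{j<n}\psi(f^jp)=0$. Their theorem then yields $v$.

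Under the abstract alternative, I will invoke Corollary~\ref{cor:two-sided-sc-livsic} directly, since its hypotheses are density of periodic points, $\mathrm{SC}$ for $\pm\psi$, and vanishing of all invariant averages, which was checked above. Alternatively, I can repeat its sandwich argument on $M$. Since $m(\psi,f)=0=m(-\psi,f)$, property $\mathrm{SC}$ gives $v_1-v_1\circ f\le\psi\le v_2-v_2\circ f$. Telescoping $w=v_2-v_1$ along periodic orbits shows $w-w\circ f\ge0$ vanishes on the periodic set, and density gives $\psi=v_1-v_1\circ f$.

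Finally, I pull back by $\pi$ and use $\pi\circ F=f\circ\pi$, exactly as in Theorem~\ref{thm:livsic}. This gives $\varphi=U-U\circ F$ with $U=u+v\circ\pi$, explicitly
\[
U(x,y)=\frac{b}{1-a}y+v(x),
\]
which is continuous.

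The main obstacle is bookkeeping rather than analysis. I must confirm that the Mbarki--Santana Liv\v{s}ic conclusion is stated for exactly the periodic-obstruction or null-average hypothesis that the orbit-measure observation supplies, and that $\psi$ lies in its regularity class. Both points rest on the H\"older regularity of $\psi$ established before Theorem~\ref{thm:zooming-extension}. The closing remark that there is no extra obstruction then follows, because Lemma~\ref{lem:periodic-lifting} identifies the periodic obstructions upstairs and downstairs.
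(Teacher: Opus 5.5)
Your proposal is correct and follows the paper's argument. You use the invariant-average correspondence, then obtain $\psi=v-v\circ f$ either from the Mbarki--Santana Liv\v{s}ic theorem or from Corollary~\ref{cor:two-sided-sc-livsic}, and finally lift via $U=u+v\circ\pi$ (your orbit-measure check that periodic sums of $\psi$ vanish is a detail the paper leaves implicit).
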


\begin{proof}
The invariant-average correspondence implies that all invariant averages of
$\varphi$ vanish exactly when all invariant averages of $\psi$ vanish. Under
the first alternative, the Liv\v{s}ic theorem of Mbarki--Santana gives
$\psi=v-v\circ f$. Under the second alternative,
Corollary~\ref{cor:two-sided-sc-livsic} gives the same
conclusion from the two subcohomological inequalities and density of periodic
points. In either case the reduction formula yields
$U=v\circ\pi+u$.
\end{proof}

\begin{remark}[Why this is a substantive application]
The point is that $\varphi$ is not assumed to be constant along the
contracting fibers. The quotient theorem is applied only after the explicit
cohomological correction has produced $\psi$. Thus the result converts a
subcohomological/Liv\v{s}ic theorem for a non-uniformly expanding quotient
into a theorem for a higher-dimensional fiber extension. The construction is
fully explicit in the fiber coordinate.
\end{remark}

\subsection{What the examples do and do not prove}

The preceding examples verify all hypotheses of the reduction theorem and,
in the symbolic case, the additional hypotheses required for weak Gibbs
transfer. They should be contrasted with the following important classes.

\subsubsection{Smale--Williams solenoids}

The classical Smale--Williams solenoid has contracting stable fibers and an
expanding quotient model \cite{Williams}. The present framework applies to a
chosen quotient representation only if its factor projection admits a
continuous global section satisfying Assumption~\ref{ass:selector}. This
condition is topological and is not implied merely by hyperbolicity.
Accordingly, the solenoid is a natural target for the theory, but we do not
claim a general solenoid theorem without verifying that additional
hypothesis.

\subsubsection{Partially hyperbolic attractors}

Several partially hyperbolic attractors admit quotient descriptions and
have well-developed thermodynamic theories
\cite{FisherOliveira,RamosSiqueira,RiosSiqueira,
ClimenhagaPesinZelerowicz,ParmenterPollicott}. Whenever a concrete quotient
projection has a continuous section and uniform contraction in the fibers,
the present reduction can be combined with those quotient results. In this
sense the framework is complementary to approaches that start by assuming
the potential is already constant along stable leaves: here stable
constancy is obtained cohomologically.

The important point is that this subsection states a conditional
application, not a new theorem about all partially hyperbolic attractors.
Removing the global-section requirement, or replacing it by intrinsic
holonomy assumptions, is a natural direction for future work.

\subsubsection{Zooming and non-uniformly expanding quotients}

The preceding affine example can also be used with a quotient map from a
class for which subcohomology or Liv\v{s}ic theory is available, provided
the regularity of the induced potential is verified. The division of labor
is then
\[
\boxed{
\begin{array}{c}
\text{fiber contraction}\\
\Downarrow\\
\text{cohomological reduction}\\
\Downarrow\\
\text{quotient subcohomology/Liv\v{s}ic}\\
\Downarrow\\
\text{result on the attractor}
\end{array}}
\]
For zooming quotients, the critical subcohomological step is supplied by
Mbarki--Santana \cite{MbarkiSantana}. For non-uniformly expanding quotients,
one can similarly combine the reduction with the corresponding quotient
thermodynamic theory \cite{OliveiraViana,Pinheiro,VarandasViana,
AlvesRamosSiqueira}.

\section{Relation with Previous Work}\label{sec:comparison}

The reduction mechanism developed here sits at the intersection of several
well-established theories. We emphasize the precise division between the
classical ingredients and the new transfer framework, since the individual
conclusions below are not intended to be claimed as new in isolation.

\subsection{Sinai-type cohomological reduction}

The construction of a cohomologous potential which is constant along a
contracting or stable direction is closely related to the classical Sinai
cohomology lemma \cite{Sinai}. In the present setting the stable reduction is
obtained from the explicit series in Theorem~\ref{thm:reduction}. The novelty
claimed here is instead the systematic use of this reduction for an abstract
fiber-contracting extension, followed by a simultaneous transfer of
subcohomology, Liv\v{s}ic theory, ergodic optimization and thermodynamic
formalism. Thus the paper should be viewed as a reduction-and-transfer
framework rather than as a replacement for Sinai's lemma.
The distinction is substantive. Sinai's classical setting provides a
cohomological normalization along stable directions in hyperbolic systems;
our theorem is formulated instead for an abstract compact factor map with
uniform fiber contraction and a continuous global section. The present paper
does not claim that the convergent series itself is a new cohomological
identity. The contribution developed in this paper is the theorem-level organization of
this reduction together with the measure-theoretic and thermodynamic
consequences that can subsequently be transferred from the quotient. In
particular, no priority claim over the classical stable-direction
cohomology construction is intended.

\subsection{Subcohomology and Liv\v{s}ic theory}

Mbarki and Santana proved subcohomological and Liv\v{s}ic results for
continuous zooming systems, including important non-uniformly expanding
classes \cite{MbarkiSantana}. We use the current arXiv version (v6, revised
15 April 2025). Their quotient results supply the genuinely nontrivial
critical step in our applications. Once the fiber-dependent
potential has been reduced to $\psi$, their result can be lifted through
Theorem~\ref{thm:reduction}. The logical distinction is therefore
\[
\text{zooming subcohomology on }M
\quad\Longrightarrow\quad
\text{subcohomology on the fiber extension }N.
\]
The affine zooming application in Section~\ref{sec:examples} makes this
transfer explicit.
No strengthening of the Mbarki--Santana theorem on the quotient is
claimed here. In particular, their hypotheses are not weakened or replaced.
The step developed here is geometric: after reducing a genuinely fiber-dependent
potential to $\psi$, the quotient theorem can be pulled back and combined
with the explicit coboundary $u-u\circ F$. Thus the contribution is a
transfer result from an already established quotient theory to a class of
fiber-contracting extensions.

\subsection{Ergodic optimization}

The correspondence between invariant measures and the equality of invariant
averages reduce optimization upstairs to optimization downstairs. This is
compatible with the general cohomological viewpoint of ergodic optimization
and calibrated subactions \cite{BouschJenkinson,GaribaldiBook,
GaribaldiLopesThieullen}. The added point here is geometric: the optimizer for
a fiber-dependent potential is obtained by lifting the optimizer of an
explicit quotient potential.

\subsection{Thermodynamic formalism for partially hyperbolic systems}

There is a substantial literature on partially hyperbolic dynamics and on
equilibrium states for partially hyperbolic attractors and horseshoes
\cite{AlvesBonattiViana,CastroNascimento,FisherOliveira,RamosSiqueira,
RiosSiqueira,ClimenhagaPesinZelerowicz}. More recently,
Parmenter and Pollicott constructed equilibrium states for certain partially
hyperbolic attractors by densities, including a setting with subexponential
contraction in a centre-unstable direction \cite{ParmenterPollicott}. Their
construction is complementary to the present approach: rather than building
an equilibrium state directly on the extension, we reduce the potential and
the variational problem to a quotient whenever the global-section and uniform
fiber-contraction hypotheses hold. In particular, our method can be used as a
preprocessing step before applying an available quotient equilibrium theorem.

\subsection{Weak Gibbs measures}

Weak Gibbs measures in non-uniformly hyperbolic dynamics go back, in
particular, to the work of Yuri \cite{Yuri1999,Yuri2000}, where Gibbs bounds are
relaxed by subexponential distortion. The weak Gibbs part of the present paper
is also connected with recent work on weak Gibbs measures for local
homeomorphisms \cite{FerreiraRamos}. That work develops existence, uniqueness
and statistical consequences on quotient-type systems. Here we address the
complementary question of when a weak Gibbs measure on the quotient survives
passage to a fiber-contracting extension. The required
input is not merely zero relative entropy, but quantitative two-sided
Bowen-mass comparison. The invariant-graph example verifies this hypothesis
explicitly.

\subsection{Sequential Gibbs measures and factor maps}

The preservation of Gibbsian structure under symbolic factor maps has a
substantial literature. For full shifts, important results were obtained by
Kempton--Pollicott and by Chazottes--Ugalde
\cite{KemptonPollicott,ChazottesUgalde}; for subshifts of finite type see
Kempton \cite{KemptonSFT}. H\"older regularity of projected Gibbs states under
fiber-wise mixing factors was sharpened by Piraino \cite{Piraino}, while an
almost-additive extension was developed by Yayama \cite{YayamaAlmostAdditive}.

Sequential Gibbs measures were introduced by Ferreira and Oliveira in a
one-sided symbolic setting designed to capture Gibbs estimates available only
at point-dependent times \cite{FerreiraOliveira}. Their theorem gives a
non-uniform counterpart of the preceding factor-Gibbs results: under their
regularity condition on a one-block factor map, the image of a sequential
Gibbs measure is again sequential Gibbs, with the same maximal Gibbs-time
sequence in their cylinder framework. The sequential results of
Section~\ref{sec:sequential-gibbs} address a complementary direction: after
the contracting fiber has been removed cohomologically, sequential Gibbs
information can be lifted from the quotient to the extension whenever
Bowen-ball masses are comparable along the prescribed Gibbs blocks.  Subexponential comparison yields blockwise Bowen sequential weak Gibbs,
while uniformly bounded comparison preserves the strong Bowen sequential
Gibbs property along the prescribed admissible sequence. In symbolic settings,
comparison with the cylinder formulation of Ferreira--Oliveira additionally
uses Lemma~\ref{lem:cylinder-bowen-bridge}.

Thus a fiber-contraction reduction may be composed with a symbolic factor
map: the former is a cohomological lifting mechanism, whereas the latter is
a projection theorem for sequential Gibbs measures.  No claim is made that
the symbolic factor theorem itself follows from fiber contraction.

\subsection{Summary of the contribution}

The contribution of the present paper can therefore be summarized by the
following diagram:
\[
\boxed{
\begin{array}{c}
\text{fiber contraction + continuous section}\\\
\Downarrow\\\
\text{cohomological reduction }\varphi=\psi\circ\pi+u-u\circ F\\\
\Downarrow\\\
\text{apply quotient theory to }\psi\\
\Downarrow\\\
\begin{array}{c}
\text{subcohomology / Liv\v{s}ic}\\
\text{optimization / pressure / equilibrium}\\
\text{weak Gibbs, when Bowen-mass comparison holds}\\
\text{sequential Gibbs, along controlled Gibbs times}
\end{array}
\end{array}}
\]
This is the organizing principle of the paper.

\section{Scope and Further Directions}

The preceding results separate the hypotheses needed for reduction,
regularity, thermodynamic consequences, and weak Gibbs transfer.

\subsection{Relation with neighboring theories}

The reduction theorem is best viewed as a geometric preprocessing step: it
removes the contracting-fiber variation before one invokes a theorem on the
quotient. In ergodic optimization, the identity
\[
 m(\varphi,F)=m(\psi,f)
\]
and the bijection of minimizing measures show that the optimization problem is
completely encoded by the quotient potential. In Liv\v{s}ic theory, automatic
periodic lifting converts the periodic obstruction upstairs into the usual
periodic obstruction downstairs. In thermodynamic formalism, zero relative
entropy explains why the fibers contribute no exponential orbit complexity,
while the bounded coboundary identity
\[
 S_n\varphi-S_n\psi\circ\pi=u-u\circ F^n
\]
accounts for the pressure correspondence. These conclusions use different
parts of the hypotheses: zero relative entropy alone does not imply convergence
of the stable cohomological series, and continuous cohomological reduction
alone does not imply the Bowen-ball comparison required for weak Gibbs
transfer. This separation is essential when applying the framework to concrete
fibered systems.

Several extensions remain natural. One is to replace uniform contraction by
non-uniform or tempered contraction and determine whether the stable series
still defines a continuous transfer. A second is to seek intrinsic geometric
conditions guaranteeing a continuous or H\"older section of the quotient
map. A third is to extend the reduction from scalar observables to vector or
matrix cocycles. Finally, the bounded identity
\[
S_n\varphi-S_n\psi\circ\pi=u-u\circ F^n
\]
suggests that large-deviation statements on the quotient may transfer once
the relevant measure comparison is established. None of these extensions is
used in the proofs above.

\section{Concluding Remarks}

The main structural result of the paper is the cohomological reduction
\[
\boxed{
\varphi=\psi\circ\pi+u-u\circ F.
}
\]
Its significance is that a potential which genuinely varies in the
contracting direction can be replaced, without changing its cohomology
class, by a quotient potential. The subsequent theories are therefore
organized around the quotient rather than proved independently on the
higher-dimensional extension.

The affine skew-product in Section~\ref{sec:examples} makes this mechanism
explicit:
\[
F(x,y)=(f(x),ay+\rho(x)),
\qquad
\varphi(x,y)=g(x)+by
\]
gives
\[
u(x,y)=\frac{b}{1-a}y,
\qquad
\psi(x)=g(x)+\frac{b}{1-a}\rho(x).
\]
Thus the contracting coordinate is not simply discarded; its cumulative
effect is transferred to the quotient potential. This calculation also
shows how the abstract pressure, equilibrium, ergodic-optimization, and
subcohomological statements become concrete statements for an explicit
two-dimensional dynamical system.

The symbolic skew-product example further shows that the weak Gibbs theory
is effective when the additional Bowen-mass hypothesis is geometrically
verified. On an invariant graph the lifted measure is supported on a graph
conjugate to the quotient, so the Bowen-ball masses are compared with no
multiplicative distortion, only with a change of radii. Thus a Gibbs measure
on the quotient lifts to a weak Gibbs measure for the original
fiber-dependent potential.

The resulting transfer principle may be summarized as
\[
\boxed{
\text{cohomological theory of the quotient}
\quad\Longrightarrow\quad
\text{cohomological theory of the attractor},
}
\]
with the corresponding variational and thermodynamic statements transferred
at the same time. The hypotheses are deliberately separated: fiber
contraction gives the stable series and zero relative entropy; additional
regularity is needed for H\"older transfers; quotient subcohomology or
Liv\v{s}ic results are needed for the corresponding cohomological
conclusions; Bowen-mass comparison is needed for weak Gibbs transfer; and
sequential Bowen-mass comparison along the prescribed Gibbs blocks is needed
for the sequential transfer results.

The examples also indicate the principal limitation of the current
framework. A continuous global section is a strong hypothesis and is not
automatic for general solenoidal or partially hyperbolic quotients.
Developing an intrinsic version based on local sections, stable holonomies,
or measurable selectors would substantially enlarge the scope of the
theory. Other natural directions include non-uniform or tempered fiber
contraction, vector and matrix cocycles, and the transfer of large-deviation
principles through suitable quantitative measure comparisons.

\section*{Acknowledgements}
This work was supported by PRAPG\_CAPES-BRASIL.

\end{document}